\newtheorem{theorem}{Theorem}[section]
\newtheorem{lemma}[theorem]{Lemma}
\theoremstyle{definition}
\newtheorem{definition}[theorem]{Definition}
\newtheorem{remark}[theorem]{Remark}
\newtheorem{example}[theorem]{Example}
\newtheorem{examples}[theorem]{Examples}
\newtheorem{notation}[theorem]{Notation}
\numberwithin{equation}{section}
\newcommand{\E}{\mathcal{E}}
\newcommand{\N}{\mathbb{N}}
\definecolor{cof}{RGB}{219,144,71}
\definecolor{pur}{RGB}{186,146,162}
\definecolor{greeo}{RGB}{91,173,69}
\definecolor{greet}{RGB}{52,111,72}
\begin{document}  

\title{Moves on $k$-graphs preserving Morita equivalence}
\author[C. Eckhardt, K. Fieldhouse, D. Gent, E. Gillaspy, I. Gonzales, and D. Pask]{Caleb Eckhardt, Kit Fieldhouse, Daniel Gent, Elizabeth Gillaspy, \\Ian Gonzales, and David Pask}

\address[C. Eckhardt]{Department of Mathematics,
  Miami University,
  Oxford, OH, 45056,
  United States}
\email{eckharc@miamioh.edu}

\address[K. Fieldhouse]{Department of Mathematical Sciences, University of Montana, 32 Campus Drive \#0864, Missoula, MT 59812, USA}
\email{kit.fieldhouse@umconnect.umt.edu}

\address[D. Gent]{Department of Mathematical Sciences, University of Montana, 32 Campus Drive \#0864, Missoula, MT 59812, USA}
\email{danielpgent@gmail.com}

\address[E. Gillaspy]{Department of Mathematical Sciences, University of Montana, 32 Campus Drive \#0864, Missoula, MT 59812, USA}
\email{elizabeth.gillaspy@mso.umt.edu}

\address[I. Gonzales]{Department of Mathematical Sciences, University of Montana, 32 Campus Drive \#0864, Missoula, MT 59812, USA}
\email{ian.gonzales@umconnect.umt.edu}

\address[D. Pask]{School of Mathematics and Applied Statistics, University of Wollongong, Northfields Ave Wollongong NSW 2522, Australia}
\email{david\underline{\ }pask@uow.edu.au}

\begin{abstract}
We initiate the program of extending to higher-rank graphs ($k$-graphs) the geometric classification of directed graph $C^*$-algebras, as completed in the 2016 paper of Eilers, Restorff, Ruiz, and S\o{}rensen [ERRS16].  To be precise, we identify four ``moves,'' or modifications, one can perform on a $k$-graph $\Lambda$, which leave invariant the Morita equivalence class of its $C^*$-algebra $C^*(\Lambda)$.  These moves -- insplitting, delay, sink deletion, and reduction -- are inspired by the moves for directed graphs described by S\o{}rensen [S\o13] and Bates-Pask [BP04].  Because of this, our perspective on $k$-graphs focuses on the underlying directed graph.  We consequently include two new results, Theorem 2.3 and Lemma 2.9, about the relationship between a $k$-graph and its underlying directed graph.
\end{abstract}

\maketitle

\section{Introduction} \label{sec:intro}

Recent years have seen a number of breakthroughs in the classification of $C^*$-algebras by $K$-theoretic invariants.  For separable simple unital $C^*$-algebras $A$ which have finite nuclear dimension and satisfy the Universal Coefficient Theorem of \cite{Rosenberg87}, the {\em Elliott invariant} (consisting of the ordered $K$-theory of $A$, its trace simplex, and the pairing between traces and $K_0(A)$) is a classifying invariant \cite{tikuisis-white-winter, EllGongLinNiu, GongLinNiu}: two such $C^*$-algebras $A, B$ are isomorphic if and only if their Elliott invariants are isomorphic.  Work has already begun \cite{EllGongLinNiu-KK-contractible, GongLin-stably-projectionless} on expanding these results to the non-unital setting.

The Cuntz--Krieger algebras $\mathcal O_A$ \cite{CK} associated to irreducible matrices $A$ were one of the early classes of $C^*$-algebras for which $K$-theory was shown to be a classifying invariant \cite{CK, franks, rordam-CK}. When $A$ is not irreducible, $\mathcal O_A$ is not simple, leaving these $C^*$-algebras outside the scope of the Elliott classification program.  However, the proof of the $K$-theoretic classification of simple Cuntz--Krieger algebras draws heavily on the dynamical characterization of Cuntz--Krieger algebras as arising from one-sided shifts of finite type \cite{CK}.  As this dynamical characterization holds in the non-simple case as well, Cuntz--Krieger algebras were a natural setting for a first foray into classification of non-simple $C^*$-algebras.  This program was brought to fruition by Eilers, Restorff, Ruiz, and S\o{}rensen in \cite{errs}.

Interpreting $A$ as the adjacency matrix of a directed graph $E_A$, we have a canonical isomorphism $\mathcal O_A \cong C^*(E_A)$.  Using this perspective, as well as techniques from symbolic dynamics \cite{huang, boyle, boyle-huang}, Eilers et al.~obtained both a $K$-theoretic and a graph-theoretic classification of unital graph $C^*$-algebras.  To be precise, \cite{errs} identifies 6 ``moves'' on directed graphs $E$ with finitely many vertices\footnote{A graph $E$ has finitely many vertices iff $C^*(E)$ is unital.} which preserve the stable isomorphism class of $C^*(E).$ The authors then use filtered $K$-theory to show that, for such graphs $E, F$, an isomorphism $C^*(E) \otimes \mathcal{K} \cong C^*(F) \otimes \mathcal{K}$ can only exist if we can pass from $E$ to $F$ by a finite sequence of these 6 moves and their inverses. Eilers et al.~also show in \cite{errs} that isomorphism of two unital graph $C^*$-algebras $C^*(E), C^*(F)$ is equivalent to the existence of an order-preserving isomorphism of the filtered $K$-theory of $C^*(E)$ and $C^*(F)$.

The $K$-theory of  a graph $C^*$-algebra  \cite{cuntz-K-thy, bhrs} dictates that if $C^*(E)$ is simple, it is either approximately finite-dimensional or purely infinite.
Kumjian and Pask developed the theory of higher-rank graphs, or $k$-graphs, in \cite{kp} to provide a broader range of combinatorial examples of $C^*$-algebras.  Formally, a $k$-graph $\Lambda$ is a countable category with a functor $d: \Lambda \to {\mathbb N}^k$ satisfying a factorization property (see Definition \ref{def:kgraphRB} below).  However, $k$-graphs are also closely linked to buildings \cite{robertson-steger, konter-vdovina} and to higher-rank shifts of finite type via textile systems \cite{johnson-madden}.
The graph-theoretic inspiration for higher-rank graphs was made precise by Hazlewood et al.~\cite{hazle-raeburn-sims-webster}, who detailed in \cite[Theorems 4.4 and 4.5]{hazle-raeburn-sims-webster} the correspondence between higher-rank graphs on the one hand, and on the other hand, edge-colored directed graphs with an equivalence relation on their category of paths. 
 In this perspective, the factorization property of a $k$-graph is encoded in the set of ``commuting squares,'' or length-2 paths $ab \sim cd$ which are equivalent in the edge-colored directed graph.

The paper at hand constitutes a first step towards extending the geometric classification of graph $C^*$-algebras to the setting of higher-rank graphs.  Taking inspiration from 
\cite{drinen-thesis, bates-pask, crisp-gow, sorensen-first} as well as from \cite{errs}, we identify four moves (sink deletion, in-splitting, reduction, and delay) on row-finite, source-free $k$-graphs $\Lambda$ which preserve the Morita equivalence class of $C^*(\Lambda)$. These moves for $k$-graphs were inspired by their analogues for directed graphs, and therefore involve adding or removing edges and vertices in $\Lambda$.  Performing such a move on a $k$-graph affects the factorization property, though, as length-2 paths may become longer or shorter. Thus, geometric classification in the $k$-graph setting faces a new technical hurdle: one must identify how to adjust the factorization property after each move, so that the resulting object is still a $k$-graph.

As discussed in the introduction to \cite{bates-pask}, the moves of in-splitting and delay originate in  symbolic dynamics.  For shifts of finite type, the natural relations of conjugacy and flow equivalence \cite{parry-sullivan} are generated by matrix operations which, when translated into the graph setting, correspond to the moves of insplitting,  outsplitting and delay.  (See \cite[Sections 2.3 and 2.4]{lind-marcus} for more details.)  For directed graphs, the analogues {\tt (S)} of sink deletion and {\tt (R)} of reduction were first isolated by S\o{}rensen \cite{sorensen-first}, drawing on the very general framework given in \cite{crisp-gow} for modifying a directed graph without changing its Morita equivalence class.  The main result of \cite{sorensen-first} (Theorem 4.3) establishes that, for directed graphs $E, F$ with finitely many vertices such that $C^*(E)$ and $C^*(F)$ are simple, any stable isomorphism $C^*(E) \otimes \mathcal K \cong C^*(F) \otimes \mathcal K$ must arise from a finite sequence of insplittings, outsplittings, Cuntz splice,  the moves {\tt (S)} and {\tt (R)}, and their inverses.  As mentioned above, a series of papers by Eilers, S\o{}rensen, and others followed, which culminated in the complete classification of unital graph $C^*$-algebras in \cite{errs}.

We now outline the structure of the paper.  The  picture of higher-rank graphs as arising from edge-colored directed graphs  underlies our work in this paper, and so we take some pains in Section \ref{sec:Notation} to assure the reader of the equivalence between our approach to $k$-graphs and the more common category-theoretic perspective.  To obtain our Morita equivalence results, we rely heavily on a generalization of the gauge-invariant uniqueness theorem for $k$-graphs \cite{kp}, and on Allen's results \cite{allen} about corners in higher-rank graphs, so we also review these notions in  Section \ref{sec:Notation}. 

 Each of Sections \ref{sec:InSplit} through \ref{sec:Reduction}  is dedicated to one of our four Morita equivalence preserving moves on $k$-graphs.  For each move, we first ensure that its output is a $k$-graph, and then we show that the resulting $k$-graph $C^*$-algebra is Morita equivalent to our original $C^*$-algebra.
 We begin with in-splitting in Section \ref{sec:InSplit}. We first describe conditions under which we can ``in-split'' a $k$-graph at a vertex $v$ -- that is, create two copies of $v$ and divide the edges with range $v$ among the two copies -- in such a way that the resulting object is still a $k$-graph (Theorem \ref{thm:InsplitKG}).  Theorem \ref{thm:InsplitIso} then establishes 
  that insplitting produces a $C^*$-algebra which is isomorphic to our original one, not merely Morita equivalent.  Section \ref{sec:delay} studies the move of ``delaying'' an edge by breaking it into two edges. In order to delay an edge in a $k$-graph, the $k$-graph's factorization rule also forces us to delay many of the edges of the same color.  In Theorem \ref{thm:DelayKG}, we show that this move results in a $k$-graph.  Moreover, its $C^*$-algebra is Morita equivalent to that of our original $k$-graph (Theorem \ref{thm:DelayME}). 
In Section \ref{sec:Sink}, we  show in Theorem \ref{thm:SinkKG} that if a vertex is a sink -- that is, it emits no edges of a given color -- then after deleting the sink and all incident edges, we are still left with a $k$-graph.  The fact that this move
does not change the Morita equivalence class of the $k$-graph $C^*$-algebra is established in Theorem \ref{thm:SinkME}. 
 Finally, we turn to {\em reduction} in Section \ref{sec:Reduction}, where we identify when contraction of a ``complete edge'' (see Definition \ref{not:CompleteEdge}) in a $k$-graph produces a $k$-graph (Theorem \ref{thm:ReductionKG}).  In this case, the $C^*$-algebra of the resulting $k$-graph is always Morita equivalent  to the original $k$-graph $C^*$-algebra, by Theorem \ref{thm:ReductionME}. 
 
 Throughout the paper, we include examples showcasing the moves, and indicating the necessity of  our hypotheses.

{\bf Acknowledgments:} The authors thank Kenton Ke, Emily Morison, Jethro Thorne, and Ryan Wood for helpful comments. This research project was supported by NSF grant DMS-1800749 to E.G., and by the University of Montana's Small Grants Program.

\section{Notation} \label{sec:Notation}

Fix an integer $k \ge 1$. As our main objects of study in this paper are $k$-graphs (higher-rank graphs), we begin by recalling their definition.  First, however, we specify that throughout this paper we regard $0$ as an element of $\N$, and we view ${\mathbb N}^k$ as a category, with composition of morphisms given by addition.  Consequently, $\N^k$ has one object (namely $(0, \ldots, 0)$).  For $n = \sum_{i=1}^k n_i e_i \in {\mathbb N}^k,$ we write $|n| = \sum_i n_i$.

\begin{definition}\label{def:kgraphRB}\cite[Definitions 1.1]{kp}
Let $\Lambda$ be a countable category and $d: \Lambda \to \N^k$ a functor. If $(\Lambda, d)$ satisfies the {\em factorization rule} -- that is, for every morphism $\lambda \in \Lambda$ and $n,m \in \N^k$ such that $d(\lambda) = n + m$, there exist unique $\mu, \nu \in \Lambda$ such that $d(\mu) = m$, $d(\nu) = n$, and $\lambda = \mu \nu$ -- then $(\Lambda,d)$ is a \textit{k}-graph.

We write $\Lambda^1 = \{ \lambda \in \Lambda: |d(\lambda)|=1\}$ and $\Lambda^0 = d^{-1}(0)$. 
If $e \in \Lambda^1$, we say $e$ is an {\em edge} of $\Lambda$, and $\Lambda^0$ is the set of {\em vertices} of $\Lambda$.

Observe that the factorization rule guarantees, for each $\lambda \in \Lambda$, the existence of unique $v, w \in \Lambda^0$ such that $v \lambda w = \lambda;$ we will write $r(\lambda)$ for $v$ and $s(\lambda)$ for $w$.  Similarly, we write 
\[ v \Lambda= \{ \lambda \in \Lambda: r(\lambda) = v\} \quad \text{ and } \quad v\Lambda^n = \{ \lambda \in v \Lambda: d(\lambda) = n\}\]
for any $n\in \N^k$.  The sets $\Lambda w, \Lambda^n w$ are defined analogously.
\end{definition}

Our reason for the convention that the source of  a morphism in $\Lambda$ lies on its right, and its range lies to the left, arises from the Cuntz--Krieger relations used to define $k$-graph $C^*$-algebras; see Definition \ref{def:kgraph-algebra} and Remark \ref{rmk:kgraph-projs} below.

We now briefly describe how to model $k$-graphs using $k$-colored graphs as we use this framework extensively for our constructions.
Following \cite{hazle-raeburn-sims-webster} we let $G = (G^0, G^1, r, s)$ denote a directed graph with $G^0$ its set of vertices and $G^1$ its set of edges; $r, s: G^1 \to G^0$ are the range and source map respectively. For an integer $n \ge 2$ let $G^n$ denote the paths of length $n$ in $G$.  By a slight abuse of notation, if $\delta \in G^n$ we will write $|\delta| := n$.

We now color the graph $G$ by assigning to each edge one of the standard basis vectors, $e_i$, of ${\mathbb N}^k$ and let $G^{e_i}$ be the set of edges assigned to $e_i$, so that $G^1 = \bigcup_{i=1}^k G^{e_i}$.  The {\em path category}, $G^* = \bigcup_{n\in \N} G^n$, may now be equipped with a {\em degree functor} $d: G^* \to {\mathbb N}^k$, given on the vertices by $d(v) = 0$ for all $v \in G^0$, and on the edges by $d(f) = e_i$ if $f$ was assigned the basis vector $e_i$.  On longer paths, $d$ is extended to be additive: $d(f_n \cdots f_1) = \sum_{i=1}^n d(f_i)$. (Our reason for this enumeration of the edges in a path is that in a $k$-graph, by Definition \ref{def:kgraphRB}, we have $s(f_i) = r(f_{i-1})$ whenever $f_i f_{i-1}$ is a well-defined product of morphisms in a $k$-graph.  Consistency with this definition requires that the right-most edge in a path, $f_1$, denotes the path's initial edge and the left-most edge, $f_n$, its final edge.) As usual, the range and source maps $r,s: G^1 \to G^0$ extend to well-defined maps from $G^*$ to $G^0$, which we  continue to denote by $r$ and $s$.

\goodbreak

Theorem 4.5 of \cite{hazle-raeburn-sims-webster} establishes that, if $G$ is a $k$-colored graph as described above, then $\Lambda = G^* / \sim$ is a $k$-graph for any $(r,s,d)$-preserving equivalence relation $\sim$ on $G^*$  which also satisfies 
\begin{enumerate}
    \item[(KG0)] If $\lambda \in G^*$ is a path such that $\lambda = \lambda_2 \lambda_1$, then $[\lambda] = [p_2 p_1]$ whenever $p_1 \in [\lambda_1]$ and $p_2 \in [\lambda_2]$.
    \item[(KG1)] If $f,g \in G^1$ are edges, then $f \sim g \Leftrightarrow f = g$.
    \item[(KG2)] \textbf{Completeness:} For every $\mu = \mu_2 \mu_1 \in G^2$ such that $d(\mu_1) = e_i$, $d(\mu_2) = e_j$, there exists a unique $\nu = \nu_2 \nu_1 \in G^2$ such that $d(\nu_1) = e_j$, $d(\nu_2) = e_i$ and $\mu \sim \nu$. 
    \item[(KG3)] \textbf{Associativity:} For any $e_i$-$e_j$-$e_\ell$ path $abc \in G^3$ with $i,j,\ell$ all distinct, the  $e_\ell$-$e_j$-$e_i$  paths $hjg, nrq$ constructed via the following two routes are equal. 
    \end{enumerate}

\begin{minipage}{0.45\textwidth}
      \begin{enumerate}
        \item[Route 1:]
        Let $ab \sim de$, so $abc \sim dec$.\\
        Let $ec \sim fg$, so $abc \sim dfg$.\\  
        Let $df \sim hj$, so $abc \sim hjg$.\\
        \item[Route 2:] Let $bc \sim km$, so $abc \sim akm$.\\
        Let $ak \sim np$, so $abc \sim npm$.\\
        Let $pm \sim rq$, so $abc \sim nrq$.
    \end{enumerate}
    \end{minipage}
    \begin{minipage}{0.45\textwidth}
\begin{figure}[H]
\begin{tikzcd}[column sep=0.7cm,row sep=0.7cm]
	& \bullet \ar[rr, red, "d"]  & & \bullet \\
	\bullet  \ar[ru, blue, dashed, "f"] \ar[rr, red, near end, "j=r"] & & \bullet \ar[ru, blue, dashed, "h=n"]  & \\
	& \bullet \ar[rr, red, "b" near start] \ar[uu,black,dotted, "e" near start]& & \bullet \ar[uu,black,dotted, "a"] \\
	\bullet \ar[ru, blue, dashed, "c"] \ar[rr, red, "m"] \ar[uu,black,dotted, "g=q"] & & \bullet \ar[ru, blue, dashed, "k"] \ar[uu,black,dotted,"p", near start]& \\
\end{tikzcd}
\caption{(KG3)}
\end{figure}
\end{minipage}

\noindent
In fact, \cite[Theorem 4.4]{hazle-raeburn-sims-webster} shows that every $k$-graph arises in this way.  That is, given a $k$-graph $\Lambda$, we obtain a directed graph $G$ by setting $G^0 = \Lambda^0,  G^1 = \Lambda^1$. (This justifies our decision to call $\Lambda^0$ the vertices of $\Lambda$ and $\Lambda^1$ the edges of $\Lambda$.) Transferring the degree map $d: \Lambda \to \N^k$ to $G$ makes $G$ a $k$-colored graph; we obtain an equivalence relation on $G^*$ by setting $\lambda \sim \mu$ if the paths $\lambda, \mu$ represent the same morphism in $\Lambda$. The factorization rule in $\Lambda$ then implies that $\sim$ satisfies (KG0) - (KG3).

In this paper, we fully exploit the equivalence between $k$-colored directed graphs with equivalence relations on the one hand, and $k$-graphs on the other hand. Our general strategy will be to define a move 
$M$ on a $k$-graph $\Lambda$ in terms of its impact on the 1-skeleton $G$ and the equivalence relation $\sim$ which give  rise to  $\Lambda$.  This produces a new colored graph $G_M$ with  a new equivalence relation $\sim_M$, which we then show satisfies (KG0) - (KG3) so that the quotient $G_M/\sim_M$ is a new $k$-graph $\Lambda_M$.

For $\lambda \in G^*$ we notate its equivalence class under $\sim$ as $[\lambda] \in \Lambda$.
For $n \in \N^k$ we write  $$\Lambda^n = \{ [\lambda] \in \Lambda : d([\lambda]) = n \}.$$ 

For our purposes in this paper, we will also need an alternative characterization of the equivalence relations on $G^*$ which give rise to $k$-graphs. We begin by observing that an inductive application of the factorization rule of Definition \ref{def:kgraphRB} reveals that if $\Lambda$ is a $k$-graph, then for any morphism $\lambda \in \Lambda$ and ordered $n$-tuple $(m_1, \dots, m_n)$ of elements of $\mathbb{N}^k$ such that $|m_i| = 1$ for all $i$ and  $m_1 + \cdots + m_n = d(\lambda)$, there is a unique set of edges $\lambda_1, \dots, \lambda_n \in \Lambda^1$ such that $\lambda = \lambda_n \cdots \lambda_1$ where $d(\lambda_i) = m_i$.

\begin{definition}\label{not:ColorOrder}
\label{not:PathSteps}
For a finite path $\lambda$ in an edge-colored directed  graph $G$, 
 let $\lambda_i$ denote the $i$th edge of $\lambda$ (counting from the source of $\lambda$). The {\em color order} of $\lambda $ is the $|\lambda|$-tuple $(d(\lambda_1), d(\lambda_2),$ $\dots , d(\lambda_{|\lambda|}))$.
\end{definition}

This leads us to the following condition on an equivalence relation $\sim$ on $G^*$:

\begin{enumerate} 
    \item[(KG4)] For each $\lambda \in G^*$ and each permutation of the color order of $\lambda$, there is a unique path $\mu \in [\lambda]$ with the permuted color order. 
\end{enumerate}

\begin{theorem}\label{thm:KGalternative}
Let $G$ be an edge-colored directed graph and suppose $\sim$ is an $(r, s, d)$-preserving equivalence relation on $G^*$ satisfying (KG0).  The relation $\sim$ satisfies (KG1), (KG2), and (KG3) (and hence $G^*/\sim$ is a $k$-graph) if and only if $\sim$ satisfies (KG4).  
\label{thm:kgraphAlt}
\end{theorem}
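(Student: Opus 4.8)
The plan is to prove the two implications separately. The reverse implication, that (KG4) forces (KG1)--(KG3), is the more transparent one, and I would dispatch it first by feeding suitable permutations into (KG4). The forward implication, that (KG0)--(KG3) yield (KG4), I would split into an existence statement and a uniqueness statement, handling existence by iterating (KG2) and handling uniqueness by invoking the fact (from \cite[Theorem 4.5]{hazle-raeburn-sims-webster}) that $G^*/\sim$ is already a $k$-graph and then appealing to unique factorization.

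For the implication (KG4) $\Rightarrow$ (KG1)--(KG3), I would fix the relevant path and apply the existence-and-uniqueness clause of (KG4) to one carefully chosen permutation of its color order. Taking $\lambda = f$ to be a single edge handles (KG1): its color order is the length-one tuple $(d(f))$, whose only permutation is itself, so (KG4) says $[f]$ contains a unique edge of that color; since $f \sim g$ forces $d(g) = d(f)$ (as $\sim$ is $d$-preserving) and places $g$ in $[f]$, uniqueness gives $g = f$, while the converse is reflexivity. Taking $\lambda = \mu_2\mu_1$ with color order $(e_i, e_j)$ and feeding (KG4) the transposed color order $(e_j, e_i)$ produces exactly the unique $\nu = \nu_2\nu_1 \sim \mu$ required by (KG2), with the degenerate case $i = j$ returning $\mu$ itself. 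Finally, once (KG2) is available, each elementary swap in Routes 1 and 2 is well defined, and (KG0) propagates each swap from a length-2 subpath to all of $abc$; hence both $hjg$ and $nrq$ lie in $[abc]$ and carry the fully reversed color order $e_\ell$-$e_j$-$e_i$, so (KG4) uniqueness forces $hjg = nrq$, which is (KG3).

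For the implication (KG0)--(KG3) $\Rightarrow$ (KG4): given a path $\lambda$ of length $n$ and a permutation of its color order, existence follows by writing the permutation as a product of adjacent transpositions and applying (KG2) one swap at a time, using (KG0) both to localize each swap to a length-2 subpath and to conclude the result remains $\sim \lambda$ (adjacent equal colors give a trivial swap). For uniqueness I would use that (KG0)--(KG3) make $\Lambda := G^*/\sim$ a $k$-graph by \cite[Theorem 4.5]{hazle-raeburn-sims-webster}, so the unique-factorization consequence of the factorization rule recorded just before Definition~\ref{not:ColorOrder} applies. Concretely, (KG1) makes $f \mapsto [f]$ a bijection $G^1 \to \Lambda^1$, since every degree-$e_i$ morphism is the class of a single edge and distinct edges are inequivalent. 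Given two paths $\mu = \mu_n\cdots\mu_1$ and $\mu' = \mu'_n\cdots\mu'_1$ in $[\lambda]$ with the same color order $(m_1, \dots, m_n)$, passing to $\Lambda$ via (KG0) yields two factorizations $[\lambda] = [\mu_n]\cdots[\mu_1] = [\mu'_n]\cdots[\mu'_1]$ into edges of prescribed degrees $m_t$; unique factorization forces $[\mu_t] = [\mu'_t]$ for every $t$, and then (KG1) upgrades this to $\mu_t = \mu'_t$, i.e.\ $\mu = \mu'$.

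The main obstacle is less a single hard estimate than the careful bookkeeping at the interface between the two pictures. On the forward side all the genuine coherence is already packaged inside \cite[Theorem 4.5]{hazle-raeburn-sims-webster}, so the work is to translate ``permutation of a color order'' into ``ordered tuple of degrees $(m_1,\dots,m_n)$ with $\sum_t m_t = d(\lambda)$,'' to keep the source-on-the-right convention straight when reading off factorizations, and to use (KG1) in both directions to pass between edges of $G$ and edges of $\Lambda$. If one instead wanted a proof of uniqueness that does not cite $k$-graph-ness, the obstacle would sharpen into a confluence argument: one would orient the (KG2) swaps into a terminating rewriting system and establish local confluence, where disjoint swaps commute automatically and overlapping swaps on three consecutive edges are reconciled precisely by the associativity relation (KG3), then invoke Newman's lemma. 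On the backward side the only delicate point is matching the orientation of the two routes in (KG3), so that both outputs are seen to carry the same reversed color order before uniqueness is applied.
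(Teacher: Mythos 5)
Your proposal is correct and takes essentially the same route as the paper: the direction (KG4) $\Rightarrow$ (KG1)--(KG3) is proved identically (apply uniqueness in (KG4) to single edges, to 2-paths, and to the outputs of the two routes in (KG3)), and the converse direction rests, as in the paper, on \cite[Theorem 4.5]{hazle-raeburn-sims-webster} together with the unique-factorization consequence of the factorization rule. The only (harmless) deviations are that you obtain existence in (KG4) by iterating (KG2) swaps rather than reading existence and uniqueness off the factorization rule simultaneously, and that you spell out the (KG0)/(KG1) bookkeeping identifying edges of $G$ with edges of $G^*/\sim$, which the paper leaves implicit.
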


\begin{proof}
	First, assume (KG0) and (KG4) hold for $\sim$ and consider an $e_i$-$e_j$-$e_\ell$ path $\lambda \in G^3$. Convert $\lambda$ into two $e_\ell$-$e_j$-$e_i$ paths via the routes described in (KG3) and label them $\mu$ and $\nu$. Since $\mu \sim \lambda$ and $\nu \sim \lambda$ by construction, the fact that $\sim$ is an equivalence relation implies   that $\mu \sim \nu$. Condition (KG4) and the fact that $\mu, \nu$ have the same color order now gives  $\mu$ = $\nu$. Thus (KG3) holds. Similarly, if $\lambda \in G^2$, then there exists a unique $\mu \in [\lambda]$ of each permuted color order. Thus (KG2) holds. Finally for $e,f \in G^1$ we have $e \sim f \implies d(e) = d(f) \implies e = f$, since each color order has a unique associated path. Also $e = f \implies e \sim f$. Thus (KG1) holds.

	Now assume $\sim$ satisfies (KG0), (KG1), (KG2), and (KG3).  We know from \cite[Theorem 4.5]{hazle-raeburn-sims-webster}    that $\Lambda := G^* / \sim$ is a $k$-graph. Thus, fix $\delta \in G^*$, and choose a sequence of basis vectors $(m_j)_{j=1}^{|\delta|},$ with $m_j \in \{ e_i\}_{i=1}^k$ for all $j$, such that  $d(\delta) = \sum_{j = 1}^{|\delta|}m_{j}$.  An inductive application of the factorization rule of Definition \ref{def:kgraphRB} implies the existence of a unique path $\gamma = \gamma_{|\delta|} \cdots \gamma_2 \gamma_1 \in [\delta]$ where $d(\gamma_j) = m_j$ for every $j$. Since our ordering of the basis vectors $(m_1, \ldots, m_{|d(\lambda)|})$ was arbitrary, it follows that $\sim$ satisfies (KG4).
\end{proof}

\begin{notation}\label{not:Source}
A $k$-graph $\Lambda$ is {\em row-finite} if for all $v \in \Lambda^0$ and all $1 \leq i \leq k$, we have $| \{ \lambda \in \Lambda^{e_i}: r(\lambda) = v\}| < \infty.$
We say $v \in \Lambda^0$ is a {\em source} if there is $i$ such that $r^{-1}(v) \cap \Lambda^{e_{i}} = \emptyset$.
\end{notation}
In this paper we will focus exclusively on row-finite source-free $k$-graphs.

\begin{definition}\cite[Definition 1.5]{kp}, \cite[Definition 7.4]{kps-hlogy}
\label{def:kgraph-algebra}
Let $\Lambda$ be a row-finite, source-free $k$-graph $\Lambda$. 
A {\em Cuntz--Krieger $\Lambda$-family} 
is a collection of 
projections $\{P_v: v \in \Lambda^0\}$ and partial isometries $\{ T_f: f \in \Lambda^1\}$ satisfying the {\em Cuntz--Krieger relations:}
\begin{enumerate}
    \item[(CK1)] The projections $P_v$ are mutually orthogonal.
    \item[(CK2)] If $a, b, f, g \in \Lambda^1$ satisfy $af \sim gb $, then $T_a T_f = T_g T_b.$
    \item[(CK3)] For any $f \in \Lambda^1$ we have $T_f^* T_f = P_{s(f)}.$
    \item[(CK4)] For any $v\in \Lambda^0$ and any $1 \leq i \leq k,$ we have $\displaystyle P_v = \sum_{f: r(f) = v, d(f) = e_i}T_fT_f^*.$
\end{enumerate}
There is a universal $C^*$-algebra for these generators and relations, which is denoted $C^*(\Lambda) = C^*(\{ p_v, t_f\})$.  For any Cuntz--Krieger $\Lambda$-family $\{ P_v, T_f\}$, we consequently have a  surjective $*$-homomorphism $\pi: C^*(\Lambda) \to C^*(\{ P_v, T_e\})$, such that $\pi(p_v) = P_v$ and $\pi(t_f) = T_f$ for all $v \in \Lambda^0, f \in \Lambda^1$.
\end{definition}

\begin{remark}
\label{rmk:kgraph-projs}
Observe that if $\{ T_f, P_v\}$ is a Cuntz--Krieger $\Lambda$-family, then (CK3) implies that $T_f P_{s(f)} = T_f$.  Similarly, by (CK4) and the fact that a sum of projections is a projection iff those projections are orthogonal, $P_{r(f)} T_f = T_f$.
Thus, viewing edges in $\Lambda^1$ as pointing from right to left ensures the compatibility of concatenation of edges in $\Lambda$ with the multiplication in $C^*(\Lambda)$.
\end{remark}

If $\Lambda = G^*/\sim$, and  $\mu, \nu \in  G^*$ represent the same equivalence class in $\Lambda$, then Condition (CK2), together with conditions (KG0)--(KG2), guarantees that 
\[ t_{\mu_n} \cdots t_{\mu_1} = t_{\nu_n} \cdots t_{\nu_2}  t_{\nu_1}.\]
  Thus, for $[\mu] \in \Lambda$, we define $t_\mu := t_{\mu_n} \cdots t_{\mu_1}$. 
\cite[Lemma 3.1]{kp} then implies that
$ \{ t_{\mu} t_{\nu}^* : [\mu], [\nu] \in \Lambda\} $
densely spans $C^*(\Lambda).$

\begin{remark}
We have opted to describe $C^*(\Lambda)$ purely in terms of the partial isometries associated to the vertices and edges, rather than the more common description using all of the partial isometries $\{ t_\lambda: \lambda \in \Lambda\},$ because all of our ``moves'' on $k$-graphs occur at the level of the edges.  
\end{remark}

A crucial ingredient in our proofs that all of our moves preserve the Morita equivalence class of $C^*(\Lambda)$ is the {\em gauge-invariant uniqueness theorem}.  To state this theorem, observe first that the universality of $C^*(\Lambda)$ implies  the existence of a canonical action $\alpha$ of ${\mathbb T}^k$ on $C^*(\Lambda)$ which satisfies 
\[ \alpha_z(t_e) = z^{d(e)} t_e\qquad \text{ and } \qquad \alpha_z(p_v) = p_v\]
for all $z \in {\mathbb T}^k, e \in \Lambda^1$ and $ v \in \Lambda^0$.

\begin{theorem}
\label{thm:GIUT}
\cite[Theorem 3.4]{kp}
Fix a row-finite source-free $k$-graph $\Lambda$ and a $*$-homomorphism $\pi :  C^*(\Lambda) \to B$.  If $B$ admits an action $\beta$ of ${\mathbb T}^k$ such that $\pi \circ \alpha_z = \beta_z \circ \pi$ for all $z \in {\mathbb T}^k$,  and for all $v \in \Lambda^0$ we have $\pi(p_v) \not= 0$, then $\pi$ is injective.
\end{theorem}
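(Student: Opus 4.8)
The plan is to follow the classical template for gauge-invariant uniqueness theorems: construct a faithful conditional expectation onto the fixed-point algebra of the gauge action, establish injectivity of $\pi$ on that fixed-point algebra using the hypothesis $\pi(p_v) \neq 0$, and then bootstrap up to injectivity on all of $C^*(\Lambda)$.

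First I would record the behaviour of the gauge action on the spanning elements $t_\mu t_\nu^*$. Since $\alpha_z(t_\mu t_\nu^*) = z^{d([\mu])-d([\nu])} t_\mu t_\nu^*$, the fixed-point algebra (the \emph{core}) is $\mathcal{F} := \overline{\operatorname{span}}\{t_\mu t_\nu^* : d([\mu]) = d([\nu])\}$. Averaging over normalized Haar measure on $\mathbb{T}^k$ defines a conditional expectation
\[ \Phi(a) = \int_{\mathbb{T}^k} \alpha_z(a)\,dz, \]
which is faithful and maps $C^*(\Lambda)$ onto $\mathcal{F}$. Because $\pi(C^*(\Lambda))$ is invariant under $\beta$ (by the intertwining hypothesis), the same averaging with $\beta$ produces a conditional expectation $\Phi^B$ on $\overline{\pi(C^*(\Lambda))}$, and the identity $\pi \circ \alpha_z = \beta_z \circ \pi$ yields the crucial relation $\pi \circ \Phi = \Phi^B \circ \pi$.

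The heart of the argument is to show that $\pi$ is injective on $\mathcal{F}$. For each $n \in \N^k$, set $\mathcal{F}_n = \overline{\operatorname{span}}\{t_\mu t_\nu^* : d([\mu]) = d([\nu]) = n\}$. Using (CK3) together with the orthogonality relation $t_\nu^* t_\sigma = \delta_{[\nu],[\sigma]} p_{s(\nu)}$ for equal-degree paths (which follows from (CK1)--(CK4) for edges, and then by the factorization property for longer paths), I would verify that, after grouping generators by their common source vertex $w$, the family $\{t_\mu t_\nu^* : s(\mu)=s(\nu)=w,\ d(\mu)=d(\nu)=n\}$ satisfies the matrix-unit relations. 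Thus $\mathcal{F}_n$ decomposes as a $c_0$-direct sum, over $w \in \Lambda^0$, of algebras of compact operators on $\ell^2(\Lambda^n w)$. A $*$-homomorphism out of such a sum of elementary $C^*$-algebras is injective exactly when it is nonzero on each simple summand; and $\pi$ is nonzero on the $w$-summand because $\pi(t_\mu)^*\pi(t_\mu) = \pi(p_{s(\mu)}) = \pi(p_w) \neq 0$ forces $\pi(t_\mu t_\mu^*) = \pi(t_\mu)\pi(t_\mu)^* \neq 0$ for any $\mu \in \Lambda^n w$. The factorization property and (CK4) give the compatible inclusions $\mathcal{F}_n \subseteq \mathcal{F}_m$ for $n \le m$, exhibiting $\mathcal{F} = \overline{\bigcup_n \mathcal{F}_n}$ as a direct limit; since injectivity passes to such limits, $\pi|_{\mathcal{F}}$ is injective.

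Finally I would assemble the pieces. If $\pi(a) = 0$ then $\pi(a^*a)=0$, so $\Phi^B(\pi(a^*a)) = \pi(\Phi(a^*a)) = 0$; injectivity of $\pi$ on $\mathcal{F}$ gives $\Phi(a^*a) = 0$, and faithfulness of $\Phi$ forces $a^*a = 0$, hence $a = 0$. The main obstacle is the structural analysis of the core in the third step: deriving the matrix-unit relations from the Cuntz--Krieger relations and the factorization property, and handling the direct-limit structure in the row-finite (but possibly infinite) regime, where the summands are algebras of compact operators rather than finite matrix algebras. Verifying that the inclusions $\mathcal{F}_n \hookrightarrow \mathcal{F}_m$ are the expected ones -- so that $\mathcal{F}$ is AF-like and $\pi$ is automatically injective on $\mathcal{F}$ once it is injective on each $\mathcal{F}_n$ -- is the key technical point.
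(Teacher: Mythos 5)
Your proposal is correct, and it is essentially the standard argument of Kumjian--Pask: the paper does not prove this theorem itself but cites it as \cite[Theorem 3.4]{kp}, whose proof proceeds exactly as you describe, namely via the faithful conditional expectation onto the core $\mathcal{F} = \overline{\operatorname{span}}\{t_\mu t_\nu^* : d(\mu)=d(\nu)\}$, the decomposition of each $\mathcal{F}_n$ into $c_0$-sums of compacts indexed by source vertices, injectivity on the core from $\pi(p_v)\neq 0$, and the bootstrap $\pi(a^*a)=0 \Rightarrow \Phi(a^*a)=0 \Rightarrow a=0$. No gaps; your identification of the AF-like structure of the core as the key technical point matches where the real work lies in \cite{kp}.
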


Many of the actions $\beta$ that will appear in our applications of the gauge-invariant uniqueness theorem take the form described in the following Lemma.   The proof is a standard   argument, using the universal property of $C^*(\Lambda)$ to establish that $\beta_z$ is an automorphism for all $z$, and using an $\epsilon/3$ argument to show that $\beta$ is strongly continuous, so we omit  the details.

\begin{lemma}\label{lemma:Action}
Let $(\Lambda , d)$ be a row-finite  source-free $k$-graph. Given a functor $R: \Lambda \to \mathbb{Z}^k$, the function $\beta: \mathbb{T}^{k} \to \textup{Aut}(C^*(\Lambda))$ which satisfies
\[ \beta_z(t_{\mu}t_{\nu}^*) = z^{R(\mu) - R(\nu)} t_{\mu}t_{\nu}^*\]
for all $\mu, \nu \in \Lambda$ and $z \in \mathbb T^k$,
is an action of ${\mathbb T}^k$ on $C^*(\Lambda)$.
\end{lemma}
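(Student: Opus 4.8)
The plan is to construct $\beta_z$ for each fixed $z \in \mathbb{T}^k$ via the universal property of $C^*(\Lambda)$ given in Definition \ref{def:kgraph-algebra}, and only afterward to verify the group and continuity axioms. First I would set, for $v \in \Lambda^0$ and $e \in \Lambda^1$, the elements $P_v := p_v$ and $T_e := z^{R(e)} t_e$, where $z^{R(e)} := \prod_{i=1}^k z_i^{R(e)_i} \in \mathbb{T}$ makes sense because $R(e) \in \mathbb{Z}^k$. The goal is to show that $\{P_v, T_e\}$ is a Cuntz--Krieger $\Lambda$-family, so that universality supplies a $*$-homomorphism $\beta_z : C^*(\Lambda) \to C^*(\Lambda)$ with $\beta_z(p_v) = p_v$ and $\beta_z(t_e) = z^{R(e)} t_e$.

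Checking the Cuntz--Krieger relations is where the functoriality of $R$ does the essential work. Relation (CK1) is immediate since the $P_v$ are the original projections. For (CK3) and (CK4) the scalars cancel: $T_f^* T_f = \overline{z^{R(f)}} z^{R(f)} t_f^* t_f = t_f^* t_f$ and $T_f T_f^* = t_f t_f^*$, because $|z^{R(f)}| = 1$. The crucial relation is (CK2): if $af \sim gb$, then $af$ and $gb$ are the same morphism in $\Lambda$, so functoriality of $R$ into $(\mathbb{Z}^k, +)$ gives $R(a) + R(f) = R(af) = R(gb) = R(g) + R(b)$; hence $T_a T_f = z^{R(a)+R(f)} t_a t_f = z^{R(g)+R(b)} t_g t_b = T_g T_b$, using (CK2) for the generators. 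Applying the same functoriality to a factorization $\mu = \mu_n \cdots \mu_1$ into edges yields $\beta_z(t_\mu) = z^{R(\mu)} t_\mu$, and since $\beta_z$ is a $*$-homomorphism, $\beta_z(t_\nu^*) = z^{-R(\nu)} t_\nu^*$; together these recover the asserted formula $\beta_z(t_\mu t_\nu^*) = z^{R(\mu)-R(\nu)} t_\mu t_\nu^*$. To upgrade each $\beta_z$ to an automorphism and obtain the homomorphism property, I would check on the generators that $\beta_1 = \mathrm{id}$ and $\beta_z \circ \beta_w = \beta_{zw}$ (both reduce to $1^{R(e)} = 1$ and $z^{R(e)} w^{R(e)} = (zw)^{R(e)}$); these identities then hold on all of $C^*(\Lambda)$ since the $p_v, t_e$ generate. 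In particular $\beta_z \circ \beta_{z^{-1}} = \beta_1 = \mathrm{id}$, so each $\beta_z \in \textup{Aut}(C^*(\Lambda))$.

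Finally, strong continuity follows from a standard $\epsilon/3$ argument, which I expect to be the most technical (though routine) step. On the dense spanning set $\{t_\mu t_\nu^*\}$, continuity of $z \mapsto \beta_z(t_\mu t_\nu^*)$ is clear, since $z \mapsto z^{R(\mu)-R(\nu)}$ is a Laurent monomial, hence continuous on $\mathbb{T}^k$; continuity then extends to finite linear combinations. For arbitrary $a \in C^*(\Lambda)$ and $z_0 \in \mathbb{T}^k$, given $\epsilon > 0$ I would choose a finite linear combination $b$ of elements $t_\mu t_\nu^*$ with $\|a - b\| < \epsilon/3$, and bound $\|\beta_z(a) - \beta_{z_0}(a)\|$ by the triangle inequality through $\beta_z(b)$ and $\beta_{z_0}(b)$. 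The outer two terms are at most $\|a - b\| < \epsilon/3$ because $*$-automorphisms of a $C^*$-algebra are isometric, and the middle term $\|\beta_z(b) - \beta_{z_0}(b)\|$ is less than $\epsilon/3$ for $z$ near $z_0$ by the continuity already established on $b$. This shows $z \mapsto \beta_z(a)$ is continuous, completing the verification that $\beta$ is an action. The one point demanding care throughout is that no step may define $\beta_z$ by a naive linear extension of the formula on $\{t_\mu t_\nu^*\}$, since these spanning elements are not linearly independent; routing well-definedness through the universal property (via the Cuntz--Krieger relations) is precisely what legitimizes the construction, and this is the conceptual crux of the argument.
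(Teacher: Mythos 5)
Your proposal is correct and follows exactly the route the paper indicates (the paper omits the details, saying only that one uses the universal property of $C^*(\Lambda)$ to show each $\beta_z$ is an automorphism and an $\epsilon/3$ argument for strong continuity): you verify the Cuntz--Krieger relations for $\{p_v, z^{R(e)}t_e\}$ — with functoriality of $R$ doing the work in (CK2) — invoke universality, check the group law on generators, and run the standard density/isometry argument for continuity. Your closing remark about why well-definedness must be routed through the universal property rather than linear extension on the non-independent spanning set $\{t_\mu t_\nu^*\}$ is precisely the right point of care.
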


In particular, we can apply the above Lemma whenever we have a function $R: \Lambda^1 \to \mathbb Z^k$ such that, if we extend $R$ to a function on $G^n$ by the formula 
 \[R(\lambda_n \cdots \lambda_1) := R(\lambda_n) + \cdots + R(\lambda_1),\]
   $R$ becomes a well-defined function on $\Lambda$.

In addition to Theorem \ref{thm:GIUT} and Lemma \ref{lemma:Action}, our proofs that delay and reduction preserve Morita equivalence will rely on Allen's description \cite{allen} of corners in $k$-graph $C^*$-algebras.  To state Allen's result, we need the following definition.
\begin{definition}
\label{def:saturation}
Let $(\Lambda, d)$ be a row-finite $k$-graph.
The {\em saturation} $\Sigma(X)$ of a set $X\subseteq \Lambda^0$ of vertices is the smallest set $S\subseteq \Lambda^0$ which contains $X$ and satisfies
\begin{enumerate}
\item (Heredity) If $v \in S$ and $r(\lambda) = v$ then $s(\lambda) \in S$;
\item (Saturation) If $s(v\Lambda^n) \subseteq S$ for some $n \in \N^k$ then $v \in S$. 
\end{enumerate}
\end{definition}
The following Theorem results from combining Remarks 3.2(2), Corollary 3.7, and Proposition 4.2 from \cite{allen}.
\begin{theorem}\cite{allen}
\label{thm:allen}
Let $(\Lambda, d)$ be a $k$-graph and $X \subseteq \Lambda^0$.  Define 
\[ P_X = \sum_{v\in X} p_v \in M(C^*(\Lambda)).\]
If   $\Sigma(X) = \Lambda^0$, then $P_X C^*(\Lambda) P_X$ is Morita equivalent to $C^*(\Lambda).$  
\end{theorem}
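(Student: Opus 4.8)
The plan is to reduce the statement to the general principle that a \emph{full} corner of a $C^*$-algebra is Morita equivalent to the whole algebra, and then to verify fullness using the Cuntz--Krieger relations together with the defining properties of the saturation. Write $A = C^*(\Lambda)$. Since the $p_v$ are mutually orthogonal projections in $M(A)$, the finite subsums of $P_X = \sum_{v \in X} p_v$ form an increasing net bounded by the identity, so $P_X$ converges strictly to a projection in $M(A)$ and $P_X A P_X$ is a genuine corner. The general theory of Hilbert modules then shows that the right Hilbert $A$-module $\overline{P_X A}$, with $A$-valued inner product $\langle \xi, \eta\rangle_A = \xi^* \eta$, implements a Morita equivalence between $A$ and the algebra of compact operators $\mathcal{K}(\overline{P_X A}) \cong P_X A P_X$, \emph{provided} the module is full, i.e. provided $\overline{\operatorname{span}}\{\xi \eta^* : \xi, \eta \in P_X A\} = \overline{A P_X A} = A$. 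Thus everything comes down to showing that $P_X$ is a full projection, meaning the closed two-sided ideal $I := \overline{A P_X A}$ generated by $P_X$ is all of $A$.

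To prove fullness I would track which vertex projections lie in $I$. Set $S := \{ v \in \Lambda^0 : p_v \in I \}$, and claim that $S$ contains $X$ and is both hereditary and saturated; by minimality of $\Sigma(X)$ this forces $\Lambda^0 = \Sigma(X) \subseteq S$. The containment $X \subseteq S$ is immediate, since for $v \in X$ we have $p_v \leq P_X$, hence $p_v = p_v P_X \in A P_X \subseteq I$. For heredity, recall from Remark \ref{rmk:kgraph-projs} that $p_{r(\lambda)} t_\lambda = t_\lambda$, while iterating (CK3) gives $t_\lambda^* t_\lambda = p_{s(\lambda)}$; so if $r(\lambda) = v \in S$, then $p_{s(\lambda)} = t_\lambda^* p_v t_\lambda \in I$ because $I$ is an ideal, whence $s(\lambda) \in S$. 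For saturation, fix $v$ with $s(v\Lambda^n) \subseteq S$ for some $n \in \N^k$; row-finiteness makes $v\Lambda^n$ a finite set, and the iterated form of (CK4) gives $p_v = \sum_{\lambda \in v\Lambda^n} t_\lambda t_\lambda^*$. Each summand equals $t_\lambda\, p_{s(\lambda)}\, t_\lambda^* \in I$, so $p_v \in I$ and $v \in S$.

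With $S = \Lambda^0$ in hand, I would conclude $I = A$ as follows. By \cite[Lemma 3.1]{kp} the elements $t_\mu t_\nu^*$ densely span $A$, and each such element factors as $t_\mu t_\nu^* = t_\mu p_{s(\mu)} p_{s(\nu)} t_\nu^*$, which is either $0$ (when $s(\mu) \neq s(\nu)$) or equals $t_\mu p_w t_\nu^*$ with $w = s(\mu) = s(\nu)$. Since $p_w \in I$ and $I$ is a closed ideal, every $t_\mu t_\nu^*$ lies in $I$, so $I = A$. This establishes that $P_X$ is full and completes the argument via the full-corner equivalence invoked in the first paragraph.

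The main obstacle I anticipate is bookkeeping at the level of the multiplier algebra rather than any single hard estimate: one must justify the strict convergence of the possibly infinite sum $P_X$, confirm that ``the ideal generated by $P_X \in M(A)$'' is correctly modeled by $\overline{A P_X A}$, and state precisely the full-corner Morita theorem, identifying $\mathcal{K}(\overline{P_X A})$ with $P_X A P_X$ and checking that fullness of the module is equivalent to fullness of the projection. The only genuinely $k$-graph-specific ingredient is the iterated Cuntz--Krieger relation $p_v = \sum_{\lambda \in v\Lambda^n} t_\lambda t_\lambda^*$, which is exactly what translates the saturation condition into the algebra; verifying it requires the factorization property and row-finiteness, but is otherwise routine.
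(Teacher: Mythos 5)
Your argument is correct, but note that the paper never proves this statement: Theorem \ref{thm:allen} is imported wholesale from the literature, by combining Remarks 3.2(2), Corollary 3.7, and Proposition 4.2 of \cite{allen}. So there is no in-paper proof to compare against; what you have written is a self-contained substitute for that citation. Your route -- write $A = C^*(\Lambda)$, realize $P_X A P_X$ as a corner by a projection $P_X \in M(A)$, invoke the standard fact that a full corner is Morita equivalent to the ambient algebra, and verify fullness by showing that $S = \{ v \in \Lambda^0 : p_v \in \overline{A P_X A}\}$ contains $X$, is hereditary (via $p_{s(\lambda)} = t_\lambda^* p_{r(\lambda)} t_\lambda$), and is saturated (via the iterated (CK4) relation $p_v = \sum_{\lambda \in v\Lambda^n} t_\lambda t_\lambda^*$), so that $S \supseteq \Sigma(X) = \Lambda^0$ and hence $\overline{A P_X A} = A$ by the density of $\operatorname{span}\{t_\mu t_\nu^*\}$ -- is essentially the mechanism underlying Allen's cited results, and is the higher-rank analogue of the classical full-corner arguments for directed graph algebras; its virtue is that it makes the paper's use of the theorem independent of \cite{allen}. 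Two small repairs are needed. First, the fullness condition for the right Hilbert module $\overline{P_X A}$ should be phrased in terms of the inner products: $\overline{\operatorname{span}}\{\xi^*\eta : \xi, \eta \in P_X A\} = \overline{A P_X A}$; you wrote $\xi\eta^*$, which lies in $P_X A P_X$ and instead describes the compact operators $\mathcal{K}(\overline{P_X A}) \cong P_X A P_X$. Second, your saturation step genuinely uses that $\Lambda$ is row-finite and source-free (finiteness and nonemptiness of $v\Lambda^n$, and the validity of (CK4) at every vertex); these hypotheses are suppressed in the statement of Theorem \ref{thm:allen} but are standing assumptions throughout the paper, so you should cite them explicitly rather than treat the verification as purely routine.
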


\section{In-splitting}
\label{sec:InSplit}

The move of {\em in-splitting} a $k$-graph at a vertex $v$ which we describe in this section should be viewed as the analogue of the {\em out-splitting} for directed graphs which was introduced by Bates and Pask in \cite{bates-pask}.  This is because the Cuntz--Krieger conditions used by Bates and Pask to describe the  $C^*$-algebra of a directed graph differ from the standard Cuntz--Krieger conditions for $k$-graphs.  In the former, the source projection $t_e^* t_e$ of each partial isometry $t_e$, for $ e \in \Lambda^1,$ is required to equal $p_{r(e)}$, whereas our Definition \ref{def:kgraph-algebra} requires $t_e^* t_e = p_{s(e)}.$

The following definition indicates the care that must be taken in in-splitting for higher-rank graphs. The pairing condition of Definition \ref{def:pairing} is necessary even for 2-graphs (cf.~Examples \ref{ex:Insplit} below), but is vacuous for directed graphs. Although in- and out-splitting for directed graphs (cf.~\cite{bates-pask, sorensen-first}) allow one to ``split'' a vertex into any finite number of new vertices,  the delicacy of the pairing condition has led us to  ``split'' a vertex into only two new vertices.

\begin{definition}\label{def:pairing}
Let $(\Lambda, d)$ be a source-free $k$-graph with  1-skeleton $G = (\Lambda^{0}, \Lambda^1, r, s)$ and path category $G^*$. Fix  $v \in \Lambda^{0}$. Partition $r^{-1}(v) \cap \Lambda^1$ into two non-empty sets $\E_1$ and $\E_2$ satisfying the \textit{pairing condition}: if $a, f \in r^{-1}(v) \cap \Lambda^1$ and there exist edges $g ,b\in \Lambda^1$ such that $ ag  \sim fb  ,$ then $f$ and $a$ are contained in the same set. 
\end{definition}
We will use the partition $\mathcal E_1 \cup \mathcal E_2$ of $r^{-1}(v) \cap \Lambda^1$ when we in-split $\Lambda$ at $v$ in Definition \ref{def:Insplit} below.  First, however, we pause to examine some consequences of the pairing condition.

\begin{remark}
If $a \not= f$ are edges of the same color, then  the relation $\sim$ underlying the $k$-graph $\Lambda$ will never satisfy $a g \sim f b$.  Thus, the pairing condition places no restrictions on edges of the same color.  It follows that our definition of insplitting (Definition \ref{def:Insplit} below) is consistent with the definition of insplitting \cite[Section 5]{bates-pask} for directed graphs.

However, for $k > 2$, the pairing condition means that not all $k$-graphs can be in-split at all vertices.  Satisfying the pairing condition requires that if $f b \sim a g $ then $f,a$ are in the same set.  This may force one of the sets $\E_i$ to be empty, which is not allowed under Definition \ref{def:pairing}.
\end{remark}

\goodbreak

\begin{examples}
{\color{white},}

\label{ex:Insplit}
\begin{enumerate}
\item The property of having a valid partition $\mathcal E_1 \cup \mathcal E_2$ of $r^{-1}(v) \cap \Lambda^1$ at a given vertex  $v$ depends not  only on the 1-skeleton $G$ of $\Lambda$, but also  on the equivalence relation $\sim$ giving $\Lambda = G^*/\sim$.  For example, let $\Lambda$ be a $2$-graph with one vertex, $\Lambda^{e_1} = \{ a,b\}$ and $\Lambda^{e_2} = \{ e,f \}$.
\begin{enumerate}
\item If we define $ae\sim ea, af\sim fa, be\sim eb, bf\sim fb$,  repeatedly applying the pairing condition gives $\E_1 = \{a,b,e,f \}$ and so no valid partition is possible.  Thus $\Lambda$ cannot be in-split.
\item On the other hand, if we set $ae\sim ea, af\sim eb, be\sim fa, bf\sim fb$, we can take $\E_1 = \{ a,e \}$ and $\E_2 = \{b,f\}$.  Thus in this case $\Lambda$ can be in-split.
\end{enumerate}
\item It may be possible to find two different valid partitions at a given vertex. Let $\Gamma$ be a $2$-graph with one vertex, $\Gamma^{e_1} = \{ a,b,c,d \}$
and $\Gamma^{e_2} = \{ e,f,g,h \}$, and  the equivalence relation
\begin{align*}
ae\sim ea, \ af\sim eb, \ ag \sim  ec, \ ah \sim ed, & \ be\sim fa, \ bf\sim fb, \ bg \sim fc, \ bh\sim fd , \\
ce \sim ga, \ cf \sim  gb,\  cg \sim gc , \ ch \sim gd , \ & de\sim hd,\  df\sim hc, \ dg \sim  hb, \ dh \sim  ha .
\end{align*}

\noindent Then $\E_1 = \{ a, c, e , g \}$, $\E_2 = \{ b,f , d , h\}$ and $\E_1 = \{ a, e  \}$, $\E_2 = \{ b,c,d,f,g,h\}$ are two  partitions satisfying the pairing condition.
\end{enumerate}
\end{examples}

\begin{lemma}\label{lemma:Insplit}
For $j \in \{1,2\}$, $\E_j$ has an edge of every color.
\end{lemma}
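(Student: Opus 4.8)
The plan is to exploit source-freeness together with the completeness axiom (KG2) to show that membership of a single edge in a fixed set $\E_j$ already forces $\E_j$ to contain an edge of every other color. Since $\Lambda$ is source-free, for each color $e_i$ we have $r^{-1}(v) \cap \Lambda^{e_i} \neq \emptyset$, so $v$ receives an edge of every color; and because $\E_1$ and $\E_2$ are non-empty, each $\E_j$ contains at least one edge, necessarily of some color $e_i$. It therefore suffices to prove the following propagation claim: if $\E_j$ contains an edge of color $e_i$, then for every color $e_\ell$ with $\ell \neq i$, the set $\E_j$ also contains an edge of color $e_\ell$.

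To prove the claim I would fix an edge $a \in \E_j$ with $d(a) = e_i$ and a color $e_\ell$, $\ell \neq i$. Applying source-freeness at the vertex $s(a)$ produces an edge $g \in \Lambda^{e_\ell}$ with $r(g) = s(a)$, so that $ag$ is a genuine length-$2$ path in $G^*$ with $r(ag) = r(a) = v$ and color order $(e_\ell, e_i)$. The completeness axiom (KG2) then yields a unique path $fb$ with $d(f) = e_\ell$, $d(b) = e_i$, and $ag \sim fb$. Because $\sim$ is $(r,s,d)$-preserving, $r(f) = r(fb) = r(ag) = v$, and hence $f \in r^{-1}(v) \cap \Lambda^{e_\ell}$. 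Now the pairing condition of Definition \ref{def:pairing}, applied to the relation $ag \sim fb$ with $a, f \in r^{-1}(v) \cap \Lambda^1$, forces $a$ and $f$ to lie in the same set; that is, $f \in \E_j$. Thus $\E_j$ contains an edge of color $e_\ell$, which establishes the propagation claim.

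Combining the claim with the fact that each $\E_j$ is non-empty (and so contains an edge of some color) immediately gives that both $\E_1$ and $\E_2$ contain an edge of every color, as desired. The one point requiring care — and the only place where all of the hypotheses are used simultaneously — is the verification that the edge $f$ produced by completeness has range exactly $v$, so that the pairing condition genuinely applies to the pair $(a,f)$. This is where source-freeness (used to build the commuting square at all) and the range-preservation of $\sim$ (used to locate $f$ back at $v$) must be combined; getting the orientation of the path correct, so that $a$ and $f$ are the edges with range $v$ rather than the edges $g$ and $b$, is the main thing to track.
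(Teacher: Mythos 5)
Your proof is correct and follows essentially the same route as the paper's: both take an edge of $\E_j$, use source-freeness at its source to build a two-colored path into $v$, factor that path via (KG2), and invoke range-preservation of $\sim$ together with the pairing condition to place the resulting range-$v$ edge in $\E_j$. The only cosmetic difference is that you isolate a ``propagation'' claim for colors $e_\ell \neq e_i$, whereas the paper's single computation handles all colors at once.
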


\begin{proof}
Note that there exists $e \in \E_j$ and $s(e)$ is not a source. Thus for $1 \leq i \leq k$ there exists $f \in r^{-1}(s(e)) \cap \Lambda^{e_i}$, and hence there exists a unique $\lambda = \lambda_1 \lambda_2 \in G^2$ such that $d(\lambda_2) = d(e)$, $d(\lambda_1) = e_i$, and $\lambda \sim e f$. Therefore, by the definition of $\E_j$, we have $\lambda_1 \in \E_j$. 
\end{proof}

\begin{definition}\label{def:Insplit}
Let $(\Lambda, d)$ be a source-free $k$-graph. Fix $v \in \Lambda^0$ and a partition $\mathcal E_1 \cup \mathcal E_2$ of $r^{-1}(v) \cap \Lambda^1$ satisfying Definition \ref{def:pairing}.  We define the associated directed $k$-colored graph $G_{I} = (\Lambda^{0}_I,\Lambda^1_I,r_I,s_I)$ with degree map $d_I$ by
\begin{align*}
\Lambda_I^0 = (\Lambda^{0} \setminus \{v\})\cup \{v^1, v^2\}& \quad
\Lambda_I^1 = (\Lambda^1 \setminus s^{-1}(v))\cup\{f^1, f^2~|~f \in \Lambda^1,~s(f) = v\} \text{, with } \\
d_I (g) &= d(g) \text{ for } g \in \Lambda^1 \setminus s^{-1}(v) \text{ and } d_I(f_i) = d(f) .
\end{align*}

\noindent
The range and source maps in $G_I$ are defined as follows:
\begin{align*}
\text{For } &f \in \Lambda^1 \text{ such that } s(f)\neq v ,~ s_I(f)=s(f)  , \\
\text{for }  &f \in \Lambda^1 \text{ such that } r(f) \neq v, ~ r_I(f)=r(f) \text{ and } r_I(f^i) =r(f), \\
\text{for }  &f\in \Lambda^1 \text{ such that } s(f)=v , ~ s_I(f^i) = v_i \text{ for } i = 1,2 \\
\text{for } &f  \in \Lambda^1 \text{ such that } r(f)=v \text{ and } f \in \E_i , ~ r_I(f)=v^i.
\end{align*}
\end{definition}

\begin{examples}
{\color{white},}

\label{ex:insplit-1}
\begin{enumerate}
\item The graph $G$ shown below admits a unique equivalence relation $\sim$ such that $G^*/\sim$ is a 2-graph $\Lambda$, because there is always at most one red-blue path (and the same number of blue-red paths) between any two vertices.  We may in-split at the vertex $v$ with $\E_1 = \{ a,e \}$ and $\E_2 = \{ b,f\}$. We duplicate $x \in s^{-1} (v)$ to $x^1 , x^2$ with sources $v^1$ and $v^2$ respectively, and $r(x^i) = r(x)$ for each $i$.
\begin{figure}[H]
    \begin{tikzcd}[column sep=0.5cm,row sep=0.3cm]
    G \\
     &&& \\
    \bullet \ar[loop above, blue, dashed] \ar[loop left, red] \ar[dd, blue, dashed] \ar[dr, red] & &  \\
     & \bullet \ar[loop above, blue, dashed] \ar[r, red,"p"] \ar[dd, blue, dashed, "a"] & \bullet \ar[loop above, blue, dashed] \ar[dd, blue, dashed, "q"] \\
    \bullet \ar[loop left, red] \ar[dr, red,  "e" ] & &  \\
    & v \ar[r, red,"x"] & \bullet \\
    \bullet \ar[loop left, red] \ar[ur, red, "f" ] & &  \\
    & \bullet \ar[loop below, blue, dashed] \ar[r, red] \ar[uu, blue, dashed, "b"] & \bullet \ar[loop below, blue, dashed ] \ar[uu, blue, dashed ]  \\
    \bullet \ar[loop below, blue, dashed] \ar[loop left, red] \ar[uu, blue, dashed] \ar[ur, red] & & 
    \end{tikzcd}
    \hspace{1.5cm}
 \begin{tikzcd}[column sep=0.5cm,row sep=0.3cm]
 G_I \\
   &&& \\
    \bullet \ar[loop above, blue, dashed ] \ar[loop left, red] \ar[dd, blue, dashed] \ar[r, red] & \bullet \ar[loop above, blue,dashed ] \ar[dr, red,"p"] \ar[dd, blue,dashed,"a"] &  \\
     &  & \bullet \ar[loop above, blue, dashed] \ar[dd, blue, dashed, "q"]  \\
    \bullet \ar[loop left, red] \ar[r, red,"e"] & v^1 \ar[dr, red,"x^1"] &  \\
    &  & \bullet \\
    \bullet \ar[loop left, red] \ar[r, red,"f"] & v^2 \ar[ur, red,"x^2"] &  \\
    & & \bullet \ar[loop below, blue, dashed] \ar[uu, blue, dashed]  \\
    \bullet \ar[loop below, blue,dashed] \ar[loop left, red] \ar[uu, blue, dashed] \ar[r, red] & \bullet \ar[loop below, blue, dashed] \ar[uu, dashed, blue,"b"] \ar[ur, red] & 
\end{tikzcd}
\caption{First example of in-splitting}
\label{fig:isplt1}
\end{figure}
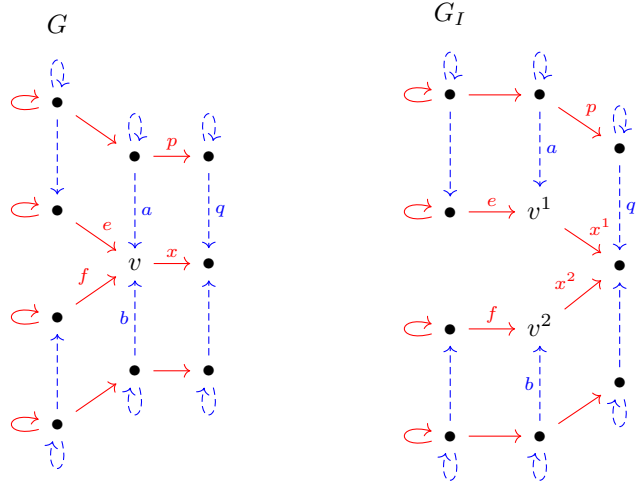

\noindent

\item We now give an example of an in-splitting where the vertex at which the splitting occurs has a loop.
The graph $G$  in Figure \ref{fig:isplt2} gives rise to multiple 2-graphs; we fix the $2$-graph structure on $G$ given by the equivalence relation
\begin{equation*}
    \begin{split}
        & a e \sim e a ,~ ce \sim fa,   \ g c \sim b f ,~ b g \sim g b .
    \end{split}
\end{equation*}
Thus, the sets $\mathcal E_1 = \{ c, f\}, \mathcal E_2 = \{ b, g\}$ satisfy the pairing condition, and we can in-split at $v$.

\begin{figure}[H]
\label{fig:isplt2}
\begin{tikzcd}[column sep=1cm,row sep=.5cm]
G \\ 
\ar[out=135, in=-135, distance=4em, red, "e"'] \ar[out=150, in=-150, distance=2em, dashed, blue, "a"] \bullet \ar[r, bend left, red, "f"] \ar[r, bend right, dashed, blue, "c"']
& v \ar[out=45, in=-45, distance=4em, red, "g"] \ar[out=30, in=-30, distance=2em, dashed, blue, "b"']
\end{tikzcd}
\hspace{1.5cm}
\begin{tikzcd}[column sep=1cm,row sep=.5cm]
G_I \\
\ar[out=135, in=-135, distance=4em,red, "e"'] \ar[out=150, in=-150, distance=2em, dashed, blue,, "a"] \bullet \ar[r, bend left, red, "f"] \ar[r, bend right, dashed, blue, "c"]
& v^1 \ar[r, bend left, red, "g^1"] \ar[r, bend right, dashed, blue, "b^1"]
& v^2 \ar[out=45, in=-45, distance=4em, red, "g^2"] \ar[out=30, in=-30, distance=2em, dashed, blue, "b^2"']
\end{tikzcd}
\caption{In-splitting at a vertex $v$ which has loops.}
\end{figure}
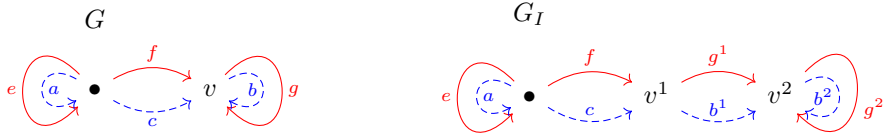

\noindent

\end{enumerate}
\end{examples}

\begin{remark}
While the vertex $v$ at which we in-split the graph $G$ from Figure \ref{fig:isplt1}
is a sink, and hence could also be handled by the methods of Section \ref{sec:Sink} below, one could easily modify $\Lambda$ to be sink-free (at the cost of a more messy 1-skeleton diagram)  without changing the essential structure of the in-splitting at $v$.
\end{remark}

In order to describe the factorization on $G_I$ which will make it a $k$-graph, we first introduce some notation.

\begin{definition}
\label{def:par-insplit}
Define a function $par: G_{I}^* \to G^*$ by
\begin{equation*}
\begin{split}
&  par(w) = w \text{ for all }w  \in \Lambda^0\backslash \{ v\}\text{ and }par(v^i) = v \text{ for }i=1,2,\\
& par(f^{i}) = f, \text{ for } f^i \in \{ f^1, f^2 | \; f \in \Lambda^1,  s(f) = v\}   \\
& par(f) = f, \text{ for } f \in \Lambda_{I}^1 \  \backslash \,   \{ f_1, f_2 | \; f \in \Lambda^1,  s(f) = v\}  \\
& par(\lambda) = par(\lambda_1) \cdots par(\lambda_n), \text{ for } \lambda = \lambda_1\cdots \lambda_n \in G_{I}^*.
\end{split}
\end{equation*}
\end{definition}

The effect of the function $par$ is to remove the superscript on any edge (or path) of $G_I$, returning its ``parent'' in $G$ (or $G^*$).
\begin{definition}
\label{def:insplit-equiv}
We define an equivalence relation on $G_{I}^*$ by $\lambda \sim_I \mu$ if and only if $par(\lambda) \sim par(\mu)$, $r_I(\lambda) = r_I(\mu)$, and $s_I(\lambda) = s_I(\mu)$. Define $\Lambda_{I} := G_{I}^*/\sim_I$; we say that $\Lambda_I$ is the result of {\em in-splitting} $\Lambda$ at $v$.
\end{definition}

\begin{examples}
{\color{white},}

\begin{enumerate}
\item Consider again the directed colored graph of Example \ref{ex:insplit-1}(1).  Observe that $x^1 a \sim_I qp$ in $G_I$ since both paths have the same range and source in $G_I$ and 
$[par (x^1 a )] = [xa]=[qp] = [par(qp)]$ in $\Lambda = G^*/\sim$.
\item In the directed colored graph of Example \ref{ex:insplit-1}(2), we have  $b^2 g^1\sim_I g^2 b^1 $ as both  paths have the same source and range and 
$[ par ( b^2 g^1 ) ] = [bg] = [gb] = [par ( g^2 b^1 )  ]$ in $\Lambda$.  Observe that although $G$ admitted multiple factorizations,  $G_I$ admits only this one.
\end{enumerate}

\end{examples}

\begin{remark}
\label{rmk:parent}
{\color{white},}

\begin{enumerate}
\item For any $\lambda \in G^*_I,$ if $s(\lambda) = s(\mu)$ and $par(\lambda) = par(\mu),$ we have  $\lambda = \mu.$ To see this, observe that by definition an edge  $e \in \Lambda^1$ satisfies $e = par(\mu)$ for at most two edges $\mu \in \Lambda_I^1.$  If $par(\mu) = par(\nu)$ and $\nu \not= \mu$, then without loss of generality we may assume $s(\mu) = v_1,  s(\nu) = v_2.$ Consequently, for $\eta \in \Lambda_I^1,$ at most one of the paths $\mu \eta, \nu \eta$ is in $G^*_I$, according to whether $\eta \in \E_1$ or $\eta \in \E_2.$ This implies our assertion.
\item Similarly, for any path $\lambda \in G^*,$ we have $\lambda = par(\mu)$ for at least one path $\mu \in G^*_I.$
\end{enumerate}
\end{remark}

\begin{theorem}\label{thm:InsplitKG}
If $(\Lambda, d)$ is a source-free $k$-graph, then the result $(\Lambda_{I}, d_I)$ of in-splitting $
\Lambda$  at a vertex $v$ is also a source-free $k$-graph.
\end{theorem}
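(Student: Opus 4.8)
The plan is to verify that $(\Lambda_I, d_I)$ is a $k$-graph by checking the hypotheses of Theorem \ref{thm:kgraphAlt}: since $G_I$ is an edge-colored directed graph and $\sim_I$ is (by construction) an equivalence relation, it suffices to show that $\sim_I$ is $(r,s,d)$-preserving, satisfies (KG0), and satisfies (KG4). Source-freeness of $\Lambda_I$ will be handled separately at the end, and this is where Lemma \ref{lemma:Insplit} does its work.

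First I would record the bookkeeping facts about $par$ and $\sim_I$. The map $par: G_I^* \to G^*$ is degree-preserving by Definition \ref{def:par-insplit} (it only strips superscripts, and $d_I(f^i) = d(f)$), so color order is preserved by $par$. The relation $\sim_I$ is $(r_I, s_I, d_I)$-preserving essentially by fiat, since it demands matching ranges and sources in $G_I$, and it demands $par(\lambda) \sim par(\mu)$, from which equality of $d_I$ follows because $\sim$ is $d$-preserving and $par$ preserves degree. Checking (KG0) is a routine compatibility: if $\lambda = \lambda_2 \lambda_1$ in $G_I^*$ and $p_i \sim_I \lambda_i$, then applying $par$ and using that $\sim$ satisfies (KG0) gives $par(p_2 p_1) \sim par(\lambda)$, while the range/source conditions transfer through the concatenation; I would write this out but expect no difficulty.

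The heart of the argument is (KG4), and I expect it to be the main obstacle. Fix $\lambda \in G_I^*$ and a permutation of its color order; I must produce a \emph{unique} $\mu \in [\lambda]_{\sim_I}$ with that color order. For existence, I would push down to $G^*$: the path $par(\lambda)$ has the same color order, and since $\Lambda = G^*/\sim$ is a $k$-graph, (KG4) gives a unique $\gamma \in [par(\lambda)]_\sim$ with the permuted color order. I then need to lift $\gamma$ back to a path $\mu \in G_I^*$ with $r_I(\mu) = r_I(\lambda)$ and $s_I(\mu) = s_I(\lambda)$ and $par(\mu) = \gamma$; this lift exists and is unique by the parent-path analysis in Remark \ref{rmk:parent}, which says a path in $G^*$ lifts to $G_I^*$ and the lift is pinned down once its source (or the $v^i$-labeling of its edges) is fixed. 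The delicate point is ensuring the lift has the \emph{prescribed} source $s_I(\lambda)$ and range $r_I(\lambda)$, and that the edge entering any copy $v^i$ lands in the correct set $\E_i$ — here the pairing condition of Definition \ref{def:pairing} is exactly what guarantees that reordering the colors of $\gamma$ does not force an edge into the wrong $\E_i$, so that a consistent lift with the required endpoints exists. Uniqueness of $\mu$ then follows from uniqueness of $\gamma$ together with Remark \ref{rmk:parent}(1) (source plus parent determines the lifted path).

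Finally, for source-freeness: given a vertex of $\Lambda_I$ and a color $e_i$, I must exhibit an incoming edge of that color. For a vertex $w \neq v^1, v^2$ this is inherited from source-freeness of $\Lambda$, since the range map is unchanged there. For $w = v^j$, an edge of color $e_i$ with range $v^j$ is precisely an edge of $\E_j$ of color $e_i$, and Lemma \ref{lemma:Insplit} guarantees that each $\E_j$ contains an edge of every color. This closes the argument.
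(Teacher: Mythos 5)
Your proposal is correct and follows essentially the same route as the paper's proof: both verify (KG0) and (KG4) by pushing paths down through the degree-preserving map $par$, lifting the unique reordered path in $[par(\lambda)]$ back to $G_I^*$ via Remark \ref{rmk:parent}, invoking Theorem \ref{thm:KGalternative}, and deducing source-freeness from Lemma \ref{lemma:Insplit}. If anything, you are more explicit than the paper about the one delicate point --- that the pairing condition of Definition \ref{def:pairing} is what forces the lifted path to have the prescribed range $r_I(\lambda)$ when it terminates at a copy of $v$ --- a step the paper's proof passes over with ``by construction.''
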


\begin{proof}
Let $(\Lambda, d)$ be a source-free $k$-graph and let $(\Lambda_{I}, d_{I})$ be produced by in-splitting at some $v \in \Lambda^{0}$. First note that $\Lambda_I$ satisfies (KG0)  by our definition of $par$ and the fact that $\Lambda$ has the factorization property. Lemma \ref{lemma:Insplit} and our hypothesis  that  $\Lambda$ be source-free guarantee that all vertices in $\Lambda_I^0$ receive edges of all colors, so $\Lambda_I$ is source-free. Consider some path $\lambda \in G_I^*$ with color order $(m_1, \dots, m_n)$. Note that $par(\lambda)$ also has color order $(m_1, \dots, m_n)$, and since $\Lambda$ is a $k$-graph, for any permutation $(c_1, \dots, c_n)$ of $(m_1, \dots, m_n)$, there exists a unique $\mu' \in \Lambda$  that has color order $(c_{1}, \dots , c_n)$ and $\mu' \in [par(\lambda)]$. By Remark \ref{rmk:parent}, there exists a unique path $\mu \in \Lambda_I$ such that $par(\mu) = \mu'$ and $s(\mu) = s(\lambda)$.  By construction, $\mu$ has color order $(c_{1}, \dots , c_n)$ and $\mu \in [\lambda]_I$. Thus,  $[\lambda]_I$ contains a unique path for each permutation of the color order of $\lambda$, and so (KG4) is satisfied. Therefore, by Theorem \ref{thm:KGalternative}, $\Lambda_{I}$ is a $k$-graph.
\end{proof}

\begin{theorem}
\label{thm:InsplitIso}
Let $(\Lambda, d)$ be a row-finite, source-free \textit{k}-graph, and let $(\Lambda_I, d_I)$ be the in-split graph of $\Lambda$ at the vertex $v$ for the partition $\E_1 \cup\E_2$ of $r^{-1}(v) \cap \Lambda^1$. We have $C^*(\Lambda_I)\cong C^*(\Lambda)$.
\end{theorem}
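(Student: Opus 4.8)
The plan is to build a Cuntz--Krieger $\Lambda$-family inside $C^*(\Lambda_I)$, invoke the universal property of $C^*(\Lambda)$ to obtain a $*$-homomorphism $\pi\colon C^*(\Lambda)\to C^*(\Lambda_I)$, and then show that $\pi$ is both surjective and injective. Writing $\{p_w,t_g\}$ for the canonical generators of $C^*(\Lambda_I)$, I would set $P_w=p_w$ for $w\neq v$, $P_v=p_{v^1}+p_{v^2}$, and define $T_f=t_f$ when $s(f)\neq v$ (the edge $f$ survives in $G_I$, with its range relabelled to $v^i$ when $r(f)=v$ and $f\in\E_i$) and $T_f=t_{f^1}+t_{f^2}$ when $s(f)=v$. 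Conditions (CK1) and (CK3) are then immediate. The computational facts used repeatedly are that $f^1,f^2$ are distinct edges of the same color and range in $\Lambda_I$, whence $t_{f^1}^*t_{f^2}=0$, while their distinct sources give $t_{f^1}t_{f^2}^*=0$; together these yield $T_f^*T_f=p_{v^1}+p_{v^2}=P_v$ and $T_fT_f^*=t_{f^1}t_{f^1}^*+t_{f^2}t_{f^2}^*$. For (CK4) at a vertex $w$ and color $e_i$, each $f\in r^{-1}(w)\cap\Lambda^{e_i}$ contributes $T_fT_f^*=t_{f^1}t_{f^1}^*+t_{f^2}t_{f^2}^*$ (a single term $t_ft_f^*$ when $s(f)\neq v$), and as $f$ varies the resulting $\Lambda_I$-edges exhaust those of color $e_i$ into $w$, so applying (CK4) in $\Lambda_I$ gives $\sum_f T_fT_f^*=P_w$; when $w=v$ one instead uses that $\E_j$-edges land at $v^j$, so the two applications of (CK4) at $v^1$ and $v^2$ sum to $p_{v^1}+p_{v^2}=P_v$.

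The heart of the argument is (CK2), and this is where the pairing condition of Definition \ref{def:pairing} is essential; I expect this step to be the main obstacle. The clean way to organize it is to first establish the identity $T_{\lambda_n}\cdots T_{\lambda_1}=\sum_{\mu\in G_I^*:\,par(\mu)=\lambda}t_\mu$ for every representative $\lambda=\lambda_n\cdots\lambda_1\in G^*$: expanding each $T_{\lambda_i}$ as a sum over its lifts and using that a product $t_{\mu_n}\cdots t_{\mu_1}$ is nonzero precisely when $\mu_n\cdots\mu_1$ composes to a genuine path in $G_I$, the surviving terms are exactly the lifts of $\lambda$, which by Remark \ref{rmk:parent} are in bijection with the admissible sources. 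Given $af\sim gb$ in $\Lambda$, each lift $\mu$ of $af$ is matched with the unique lift $\nu$ of $gb$ having the same source; since $par(\mu)=af\sim gb=par(\nu)$ and $s_I(\mu)=s_I(\nu)$, the two are $\sim_I$-equivalent (Definition \ref{def:insplit-equiv}) provided their ranges agree, whence $t_\mu=t_\nu$ by (CK2) in $\Lambda_I$, and summing over sources yields $T_aT_f=T_gT_b$. When $r(a)=r(g)\neq v$ the ranges agree automatically; when $r(a)=r(g)=v$, writing $a\in\E_j$ and $g\in\E_{j'}$, the relation $af\sim gb$ is exactly an instance of the hypothesis of the pairing condition, which forces $j=j'$ and hence $r_I(\mu)=v^j=v^{j'}=r_I(\nu)$. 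Thus the pairing condition is precisely what guarantees that lifts of $\sim$-equivalent paths are $\sim_I$-equivalent.

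Having produced $\pi$, I would prove surjectivity by recovering the generators of $C^*(\Lambda_I)$ inside the image. The identity $\sum_{f\in\E_j\cap\Lambda^{e_i}}T_fT_f^*=p_{v^j}$ (again from (CK4) in $\Lambda_I$, which is meaningful by Lemma \ref{lemma:Insplit}) exhibits each $p_{v^j}=\pi\bigl(\sum_{f\in\E_j\cap\Lambda^{e_i}}t_ft_f^*\bigr)$ in the image, after which $\pi(t_f)\,p_{v^j}=t_{f^j}$ separates the two lifts of each split edge and recovers every remaining $t_g$. For injectivity I would apply the gauge-invariant uniqueness theorem (Theorem \ref{thm:GIUT}), taking $\beta$ to be the canonical gauge action of $\mathbb{T}^k$ on $C^*(\Lambda_I)$: since $d_I(f^1)=d_I(f^2)=d(f)$, the action scales $\pi(t_f)=t_{f^1}+t_{f^2}$ by $z^{d(f)}$ and fixes every $\pi(p_w)$, so $\pi\circ\alpha_z=\beta_z\circ\pi$ holds on generators and hence everywhere. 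As $\pi(p_w)=p_w\neq 0$ for $w\neq v$ and $\pi(p_v)=p_{v^1}+p_{v^2}\neq 0$, Theorem \ref{thm:GIUT} gives injectivity, and therefore $\pi$ is the desired isomorphism.
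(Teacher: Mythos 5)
Your proposal is correct and follows essentially the same route as the paper: define $T_\lambda$ as the sum of generators over all lifts (i.e.\ over $\mu$ with $par(\mu)=\lambda$), verify the Cuntz--Krieger relations, obtain $\pi$ from the universal property, prove surjectivity by recovering the projections $p_{v^j}$ as (CK4)-type sums over $\E_j$, and prove injectivity via the gauge-invariant uniqueness theorem with the canonical gauge actions. Your explicit bookkeeping of where the pairing condition enters the (CK2) verification, and the observation that $\pi(t_f)p_{v^j}=t_{f^j}$ recovers the edge generators, merely spell out details that the paper's proof leaves terse.
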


\begin{proof}
Let $\{s_\lambda : \lambda \in \Lambda^0_I \cup \Lambda^1_I \}$ be the canonical Cuntz-Krieger $\Lambda_I$-family which generates $C^*(\Lambda_I)$. For $\lambda\in\Lambda^0\cup\Lambda^1$, define
\begin{equation*}
    T_{\lambda} = \sum_{par(e) = \lambda} s_{e}.
\end{equation*}

We first prove that $\{T_\lambda : \lambda \in \Lambda^0 \cup \Lambda^1\}$ is a Cuntz-Krieger $\Lambda$-family in $C^*(\Lambda_I)$.  Note that the set $\{T_\lambda : \lambda \in \Lambda^0\}$ is a collection of non-zero mutually orthogonal projections since each $T_\lambda$ is a sum of projections satisfying the same properties. Therefore  $\{T_\lambda : \lambda \in \Lambda^0\}$ satisfies (CK1). Now, choose $ab, cd \in G^2$ such that $[ab] = [cd]$. As in Remark \ref{rmk:parent},
observe that the sum defining $T_a$ contains at most two elements, and the only way it will contain two elements is if $s(a) = v$.  In that case, if $ab \in G^2,$ then either $b \in \E_1$ or $b \in \E_2$, so if $f \in G_I^1$ satisfies $par(f) = b$ then $r(f) \in \{ v^1, v^2\}$, and so 
there is only one path $ef \in G_I^2$ whose parent is $ab$.  Making the same argument for the paths in $G^2_I$ with parent $cd$ and using the factorization rule in $\Lambda_I$, we obtain
\begin{equation*}
T_aT_b = \sum_{par(e) = a}s_{e} \sum_{par(f)= b}s_{f} = \sum_{par(ef) = ab} s_e s_f = \sum_{par(gh) = cd}s_g s_h = T_c T_d.
\end{equation*}
Thus $\{T_\lambda : \lambda \in \Lambda^0 \cup \Lambda^1\}$ satisfies (CK2). Now take $f \in \Lambda^1$. If $s(f) \neq v$ we have
$$T_f^*T_f = s_f^*s_f = s_{s(f)} = T_{s(f)}.$$ 
If $s(f) = v$ we have $\{ g: par(g) = f \} = \{ f^1, f^2\}$, so the fact that $v^1 = s(f^1) \not= s(f^2) = v^2$ implies that 
\begin{equation*}
\begin{split}
T_f^*T_f 
& (s_{f^1} + s_{f^2})^* (s_{f^1} + s_{f^2})= s_{f^1}^* s_{f^1} + s_{f^2}^* s_{f^2} = s_{v^1} + s_{v^2} = T_v.
\end{split}
\end{equation*}
Thus $\{T_\lambda : \lambda \in \Lambda^0 \cup \Lambda^1\}$ satisfies (CK3). Finally fix a generator $e_{i} \in \N^{k}$ and fix $w \in \Lambda^0$.  
We first observe that if two distinct edges in $\Lambda_{I}^1$ have the same parent, they must have different sources (namely $v_1$ and $v_2)$ and orthogonal range projections, and therefore, for $\lambda \in \Lambda^1$,
\begin{equation*}
\begin{split}
& \left( \sum_{par(e) = \lambda} s_{e} \right) \left( \sum_{par(e) = \lambda} s_{e}^* \right) = \sum_{par(e) = \lambda} s_{e}s_{e}^*.
\end{split}
\end{equation*}
It follows that 
\begin{equation*}
\begin{split}
\sum_{\substack{d(e) = e_i\\ r(e) = w}} T_e T_e^* 
&= \sum_{\substack{d(e) = e_i \\ r(e) = w}} \left( \sum_{par(f) = e} s_f \right) \left(\sum_{par(f) = e} s_f^* \right) = \sum_{\substack{d(e) = n_i \\ r(e) = w}} \sum_{par(f) = e} s_f s_f^* \\
&= \sum_{\substack{d(f) = e_i \\ r(par(f)) = w}} s_f s_f^* = \sum_{par(x) = w} s_x = T_w.
\end{split}
\end{equation*}
\\
Therefore $\{T_\lambda : \lambda \in \Lambda^0 \cup \Lambda^1\}$ satisfies (CK4), and hence is a Cuntz--Krieger $\Lambda$ family in $C^*(\Lambda_I)$. Thus, by the universal property of $C^*(\Lambda)$, there exists a $*$-homomorphism $\pi: C^*(\Lambda) \to C^*(\Lambda_I)$ such that $\pi(t_\lambda) = T_{\lambda}$, where $\{t_\lambda : \lambda \in \Lambda^0 \cup \Lambda^1\}$ are the canonical generators of $C^*(\Lambda)$.  We will show that $\pi$ is an isomorphism.

Fix $w \in \Lambda_I^0$ and note that if $par(w) \neq v$ then $s_{w} = T_{w} \in \pi(C^*(\Lambda))$. Conversely if $par(w) = v$ then $w = v^j$ for some $j \in \{1,2\}$. Thus, for a fixed generator $e_i \in \N^k$ we have
\begin{equation*}
\begin{split}
\sum_{\substack{r(e) = v \\ d(e) = e_i \\ e \in \E_j}} T_e T_e^* 
= \sum_{\substack{r(e) = v \\ d(e) = e_i \\ e \in \E_j}} \sum_{par(e') = e} s_{e'} s_{e'}^* 
= \sum_{\substack{r(e') = v^j \\ d(e') = e_i}} s_{e'} s_{e'}^*
= s_{v^j} \in \pi(C^*(\Lambda)).
\end{split}
\end{equation*}
Thus, all of the vertex projections of $C^*(\Lambda_I)$ are in $\text{Im}(\pi)$, and therefore $\pi(C^*(\Lambda))$ contains all of the generators of $C^*(\Lambda_{I})$. Hence $\pi$ is surjective. 

Consider the canonical gauge actions $\alpha$ of $\mathbb{T}^k$ on $C^*(\Lambda_I)$ and $\beta$ of $\mathbb{T}^k$ on $C^*(\Lambda)$.  Observe that for all $z \in \mathbb{T}^k,$ the fact that $par$ is degree-preserving implies that
\[ \alpha_{z}(T_{\lambda}) = \sum_{par(\mu) = \lambda} \alpha_z(s_\mu) = z^{d(\lambda)} \sum_{par(\mu) = \lambda } s_\mu = z^{d(\lambda)} T_\lambda.\]
Therefore, 
\begin{equation*}
\pi(\beta_{z}(t_{\lambda})) = \pi(z^{d(\lambda)}t_{\lambda}) = z^{d(\lambda)}T_{\lambda} = \alpha_{z}(T_{\lambda}) = \alpha_{z}(\pi(t_{\lambda})),   
\end{equation*}
so $\pi$ intertwines the canonical gauge actions. The gauge invariant uniqueness theorem now implies that $\pi$ is injective, and so $C^*(\Lambda) \cong C^*(\Lambda_I)$ as claimed.
\end{proof}
\section{Delay}
\label{sec:delay}

Our goal in this section is to generalize to $k$-graphs the operation of delaying a graph at an edge -- that is, breaking an edge in two by adding a vertex in the ``middle'' of the edge. The importance of this construction can be traced back to Parry and Sullivan's analysis \cite{parry-sullivan} of flow equivalence for shifts of finite type; Drinen  realized \cite{drinen-thesis} that in the setting of directed graphs, these edge delays correspond to the expansion matrices used by Parry and Sullivan to complete the charaterization of flow equivalence for shifts of finite type.   Bates and Pask later generalized the ``delay'' operation in \cite{bates-pask} and showed that the $C^*$-algebra of a delayed graph is Morita equivalent to the $C^*$-algebra of the original graph. 

In the setting of higher-rank graphs, the ``delay'' operation becomes more intricate.  So that the resulting object satisfies the factorization rule, after delaying one edge and adding a new vertex, new edges of other colors (incident with the new vertex) must be added. This procedure is described in Definition \ref{def:delay} below, and Theorem \ref{thm:DelayKG} establishes that the resulting object $\Lambda_D$ is indeed a $k$-graph. Theorem \ref{thm:DelayME} then proves that $C^*(\Lambda_D)$  is  Morita equivalent to the $C^*$-algebra of the original $k$-graph $\Lambda$.

\begin{definition}
\label{def:delay}
Let $(\Lambda, d)$ be a $k$-graph and $G = (\Lambda^0, \Lambda^1, r, s)$ its underlying directed graph.  Fix $f \in \Lambda^1$; without loss of generality, assume $d(f) = e_1$.   We first recursively define the set $\E^{e_1}$ of all possible elements of $\Lambda^{e_1}$ which will be affected by delaying $f$, in that elements of $\E^{e_1}$ are opposite to $f$ in some commuting square in $\Lambda$.  Namely, we set 
\begin{align*}
A_1 &= \{ f \} \cup  \{ g \in \Lambda^{e_1} : ag \sim fb \text{ or } ga \sim bf  \text{ where } a,b \in \Lambda^{e_i} \text{ for }  2 \leq i \leq k   \}, \\
A_m &= \{ e \in \Lambda^{e_1} : ag \sim eb \text{ or } ga \sim be
 \text{ where } a,b \in \Lambda^{e_i} \text{ for }  2 \leq i \leq k , ~ g \in A_{m-1} \}, \\
\E^{e_1} &= \bigcup_{j = 1}^{\infty} A_j  \subseteq \Lambda^{e_1}. 
\end{align*}

In the pictures below, the dashed edges would all lie in $\E^{e_1}$.
\[
\begin{tikzcd} [column sep=.7cm, row sep=.7cm]
	& |[alias = A]| \bullet \ar[r] \ar[d, dashed, black,  "f" '] \ar[d,phantom,shift left=3ex, "\alpha"'] & |[alias = C]| \bullet \ar[d, dashed]\ar[d,phantom,shift left=3ex, "\beta"'] \ar[r] & |[alias = E]| \bullet \ar[d, dashed]\ar[d,phantom,shift left=3ex, "\gamma"'] \ar[r] & |[alias = H]| \bullet \ar[d, dashed] \\
	& |[alias = B]| \bullet \ar[r,  near start,] & |[alias = D]| \bullet \ar[r, near start] & |[alias = F]| \bullet \ar[r,  near start] & |[alias = G]| \bullet
\end{tikzcd}
~~~~\text{ or }
\begin{tikzcd} [column sep=.7cm, row sep=.7cm]
	& |[alias = A]| \bullet \ar[r] \ar[d, dashed, near start]\ar[d,phantom,shift left=3ex, "\xi"'] & |[alias = C]| \bullet \ar[d, dashed] \ar[d,phantom,shift left=3ex, "\eta"']\ar[r] & |[alias = E]| \bullet \ar[d, dashed]\ar[d,phantom,shift left=3ex, "\zeta"'] \ar[r] & |[alias = H]| \bullet \ar[d, black, dashed, "f"] \\
	& |[alias = B]| \bullet \ar[r,  near start,] & |[alias = D]| \bullet \ar[r, near start] & |[alias = F]| \bullet \ar[r,  near start] & |[alias = G]| \bullet
\end{tikzcd}
\]

\noindent
Using $\E^{e_1}$ we identify those commuting squares of degree $(e_1 + e_i)$, $i \neq 1$ in $\Lambda$ which contain an edge from $\E^{e_1}$.  These squares will form the set $\E^{e_i}:$ 
\[
\E^{e_i} = \{ [ga] \in \Lambda : g \in \E^{e_1}, a \in \Lambda^{e_i} \} . 
\]
 In the pictures above, if the solid black edges have degree $e_i$, we have $\alpha,\beta,\gamma,\xi, \eta, \zeta \in \mathcal E^{e_i}.$   By delaying $f$, these squares   will be turned into rectangles.
\noindent

To be precise, in the delayed graph, we will ``delay'' every edge in $\E^{e_1},$ replacing it with two edges:
\[
\begin{tikzcd} [column sep=.7cm, row sep=.7cm]
	& |[alias = A]| \bullet \ar[r] \ar[d, dashed, near start, "f^1" ']  & |[alias = C]| \bullet \ar[d, dashed]\ \ar[r] & |[alias = E]| \bullet \ar[d, dashed] \ar[r] & |[alias = H]| \bullet \ar[d, dashed] \\
	& |[alias = M]| \bullet  \ar[d, dashed,near start, "f^2" ']  & |[alias = K]| \bullet \ar[d, dashed] & |[alias = L]| \bullet \ar[d, dashed]& |[alias = H]| \bullet \ar[d, dashed] \\
	& |[alias = B]| \bullet \ar[r,  near start,] & |[alias = D]| \bullet \ar[r, near start] & |[alias = F]| \bullet \ar[r,  near start] & |[alias = G]| \bullet
\end{tikzcd}
~~~~\text{ or }
\begin{tikzcd} [column sep=.7cm, row sep=.7cm]
	& |[alias = A]| \bullet \ar[r] \ar[d, dashed, near start] & |[alias = C]| \bullet \ar[d, dashed] \ar[r] & |[alias = E]| \bullet \ar[d, dashed] \ar[r] & |[alias = H]| \bullet \ar[d, dashed, "f^1"] \\
	& |[alias = M]| \bullet  \ar[d, dashed]  & |[alias = K]| \bullet \ar[d, dashed] & |[alias = L]| \bullet \ar[d, dashed]& |[alias = H]| \bullet \ar[d, dashed, "f^2"] \\
	& |[alias = B]| \bullet \ar[r,  near start,] & |[alias = D]| \bullet \ar[r, near start] & |[alias = F]| \bullet \ar[r,  near start] & |[alias = G]| \bullet
\end{tikzcd}
\]
\[
\E_D^{e_1} = \{ g^1, g^2 : g \in \E^{e_1} \} 
\]

\noindent
and add an edge for every square that has been turned into a rectangle.
\[
\begin{tikzcd} [column sep=.7cm, row sep=.7cm]
	& |[alias = A]| \bullet \ar[r] \ar[d, dashed, near start, "f^1" ']  & |[alias = C]| \bullet \ar[d, dashed]\ \ar[r] & |[alias = E]| \bullet \ar[d, dashed] \ar[r] & |[alias = H]| \bullet \ar[d, dashed] \\
	& |[alias = M]| \bullet  \ar[d, dashed,near start, "f^2" ']  \ar[r,"e_\alpha"] & |[alias = K]| \bullet \ar[d, dashed] \ar[r,"e_\beta"] & |[alias = L]| \bullet \ar[d, dashed] \ar[r,"e_\gamma"] & |[alias = H]| \bullet \ar[d, dashed] \\
	& |[alias = B]| \bullet \ar[r,  near start,] & |[alias = D]| \bullet \ar[r, near start] & |[alias = F]| \bullet \ar[r,  near start] & |[alias = G]| \bullet
\end{tikzcd}
~~~~\text{ or }
\begin{tikzcd} [column sep=.7cm, row sep=.7cm]
	& |[alias = A]| \bullet \ar[r] \ar[d, dashed, near start] & |[alias = C]| \bullet \ar[d, dashed] \ar[r] & |[alias = E]| \bullet \ar[d, dashed] \ar[r] & |[alias = H]| \bullet \ar[d, dashed, "f^1"] \\
	& |[alias = M]| \bullet  \ar[d, dashed]  \ar[r,"e_\xi"] & |[alias = K]| \bullet \ar[d, dashed]  \ar[r,"e_\eta"]& |[alias = L]| \bullet \ar[d, dashed] \ar[r,"e_\zeta"] & |[alias = H]| \bullet \ar[d, dashed, "f_2"] \\
	& |[alias = B]| \bullet \ar[r,  near start,] & |[alias = D]| \bullet \ar[r, near start] & |[alias = F]| \bullet \ar[r,  near start] & |[alias = G]| \bullet
\end{tikzcd}
\]
\[
\E_D^{e_i} = \{ e_{\alpha} : \alpha \in \E^{e_i} \}.
\]

Then define the $k$-colored graph $G_D = (\Lambda_D^0, \Lambda_D^1, r_D, s_D)$ by
\begin{equation*}
\begin{split}
\Lambda_D^0 &= \Lambda^0 \cup \{ v_g \}_{g \in \E^{e_1}},  
\quad \Lambda_D^{e_1} = (\Lambda^{e_1} \setminus \E^{e_1}) \cup \E_D^{e_1} \text{, with} \\ 
& \hspace{10mm} s_D(g^1) = s(g), s_D(g^2) = v_g, 
 \hspace{3mm} r_D(g^1) = v_g, r_D(g^2) = r(g) ; \\ 
\Lambda_D^{e_i}& = \Lambda^{e_i} \cup \E_D^{e_i} \text{, with} \\ 
& \hspace{10mm} s_D(e_{\alpha}) = v_{g} \text{ such that } bg \text{ represents } \alpha \text{ and } d(g) = e_1, \\
& \hspace{10mm} r_D(e_{\alpha}) = v_{h} \text{ such that } ha \text{ represents } \alpha \text{ and } d(h) = e_1.
\end{split}
\end{equation*}

In words, to construct $G_D$ from $G$, we add one vertex per delayed edge; each delayed edge becomes two edges in $G_D$; and we add one edge  for each square that was  stretched into a rectangle by delaying the edges in $\E^1$.  If $\alpha \in \E^{e_i}$ we set $d(e_{\alpha}) = e_i$, and all other edges inherit their degree from $\Lambda$.

Let $\iota : G_D \to G$ be the partially defined inclusion map with domain $(\Lambda_D^0 \cup \Lambda_D^1) \setminus (\{\bigcup\limits_{i=1}^k \E_D^{e_i}\} \cup \{v_e : e \in \E^{e_1}\})$. 
Then, for edges $ g\in \Lambda^1_D \backslash \bigcup_{i=1}^k \E_D^{e_i}$, we can define
\begin{equation*}
s_D(g) = s(\iota(g)), ~r_D(g) = r(\iota(g)), ~d_D(g) = d(\iota(g)) .
\end{equation*}

Now let $G^*_D$ be the path category for $G_D$ and define the equivalence relation $\sim_D$ on bi-color paths  $\mu = \mu_2 \mu_1  \in G_D^2$ according to the following rules. 
\\

Case 1: Assume $\mu_1, \mu_2 \notin \bigcup_{i=1}^k \E_D^{e_i}$. Then we set $[\mu]_D = \iota^{-1}([\iota(\mu)])$.
\\

Case 2: Suppose  $\mu_j$ lies in $ \E^{e_1}_D$, so that $\mu_j \in \{ g^1, g^2\}$ for some edge $g \in \E^{e_1}$.  If $j=1$ and $\mu_1 = g^2$, then $r(\mu_1) = s(\mu_2) = \iota^{-1}( r(g)) \in \iota^{-1}(\Lambda^0)$, and the edges in $G_D$ with source in $\iota^{-1}(\Lambda^0)$ and degree $e_i$ for $i \not= 1$  are in $\iota^{-1}(\Lambda^1)$.  Therefore $\mu_2 \in \iota^{-1}(\Lambda^{e_i})$, and $\iota(\mu_2) g$ is a bi-color path in $G$, so $\iota(\mu_2) g \sim h a$ for edges $h \in \E^{e_1}, a \in \Lambda^{e_i}$.  There is then an edge $e_{[\mu_2 g]} \in \Lambda^{e_i}_D$ with source $s(\mu_1) = v_g$ and range $v_h = s(h^2)$;  we define $\mu_2 \mu_1 =  \mu_2 g^2  \sim_D h^2 e_{[\mu_2 g]}$.
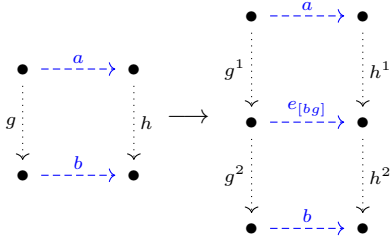
\begin{figure}[H]
\begin{tikzcd}[column sep=1cm,row sep=1cm]
\bullet \ar[r, dashed, blue, "a"] \ar[d, dotted, "g"'] & \bullet \ar[d, dotted, "h"]\\
\bullet \ar[r, dashed, blue, "b"] & \bullet
\end{tikzcd}
$\longrightarrow$
\begin{tikzcd}[column sep=1cm,row sep=1cm]
\bullet \ar[r, dashed, blue, "a"] \ar[d, dotted, "g^1"'] & \bullet \ar[d, dotted, "h^1"]\\
\bullet \ar[r, dashed, blue, "e_{[bg]}"] \ar[d, dotted, "g^2"'] & \bullet \ar[d, dotted, "h^2"]\\
\bullet \ar[r, dashed, blue, "b"] & \bullet
\end{tikzcd}
    \caption{A commuting square in $G$ and its ``children" in $G_D$, when $h,g \in \E^{e_1}$.}
    \label{fig:Case2DelayDef}
\end{figure}
If $j =1$ and $\mu_1 = g^1$, the only edges in $G_D$ with source $r(g^1) = v_g $ and degree $e_i$ for $i \not=1$ are of the form $e_{[bg ]} = e_{[ha]}$ for some commuting square $bg \sim ha $ in $ \Lambda$.  In this case, we will have $h \in \E^{e_1}$, and $r(h^1) = v_h = r(e_{[bg]})$, so we set $e_{[bg]} g^1 \sim_D  h^1 a$.

A similar argument shows that if $j=2$, the path $\mu_2 \mu_1$ will be of the form $h^1 a$ or $h^2 e_{[ha]}$, whose factorizations we have already described.
\\

Case 3: Assume $\mu$ is of the form $e_{\beta} e_{\alpha}$ for $\alpha \in \E_D^{e_i}$, and $\beta \in \E_D^{e_j}$ with $i \neq j$. Now $s_D(e_{\beta}) = r_D(e_{\alpha}) = v_g$ for some $g \in \E^{e_1}$, and consequently $\alpha, \beta \in \Lambda$ are linked as shown on the left of Figure \ref{fig:Case3DelayDef}. Since $\Lambda$ is a $k$-graph, the 3-color path outlining $\beta \alpha$ generates a 3-cube in $\Lambda$, which is depicted on the right of Figure \ref{fig:Case3DelayDef}. 
\begin{figure}[H]
\includegraphics[width=3in]{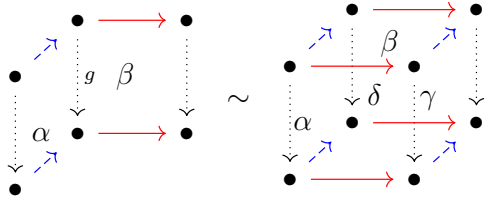}
    \caption{The commuting squares of edges from $\bigcup_{i=2}^k \E_D^{e_i}$.}
    \label{fig:Case3DelayDef}
\end{figure}
Let $\delta$ and $\gamma$ denote the faces of this cube which lie, respectively, opposite $\beta$ and $\alpha$. Since $g \in \E^{e_1}$,  all of the vertical edges of this cube are in $\E^{e_1},$ and so $\delta \in \E^{e_j}, \gamma \in \E^{e_i}.$ Moreover, the path $e_{\gamma} e_{\delta}$ is composable in $\Lambda_D$, and has the same source and range as $e_{\beta} e_{\alpha}$.  Set $e_{\beta} e_{\alpha} \sim_D e_{\gamma} e_{\delta}$.

Observe that there are no two-color paths in $G_D^2$  of the form $g e_{\alpha}$ or $e_{\alpha} g$ for $g \in \iota^{-1}(\Lambda^1)$ and $\alpha \in \E^{e_i}$, since $ r_D(g) , s_D(g) \in \iota^{-1}(\Lambda^0)$ but $r_D(e_{\alpha}), s_D(e_{\alpha}) \in \{ v_e: e \in \E^{e_1}\}.$
\\

Extend $\sim_D$ to be an equivalence relation on $G^*_D$ which satisfies (KG0) and (KG1); observe that $\sim_D$ satisfies (KG2) by construction. Define $\Lambda_D = G_D^* / \sim_D$. We call $\Lambda_D$ the graph of $\Lambda$ delayed at the edge $e$.
\end{definition}

\begin{theorem}
If $\Lambda$ is a row-finite source-free $k$-graph, then $\Lambda_D$ is also a source-free $k$-graph.
\label{thm:DelayKG}
\end{theorem}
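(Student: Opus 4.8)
The plan is to verify that $\Lambda_D = G_D^*/\sim_D$ is a $k$-graph by checking the conditions of Theorem \ref{thm:KGalternative}, namely that $\sim_D$ is an $(r,s,d)$-preserving equivalence relation satisfying (KG0) and (KG4), and separately that $\Lambda_D$ is source-free. Since Definition \ref{def:delay} already arranges for $\sim_D$ to satisfy (KG0) and (KG1) and to be $(r,s,d)$-preserving by construction, the crux is to establish (KG4): for every path $\lambda \in G_D^*$ and every permutation of its color order, there is a \emph{unique} path $\mu \in [\lambda]_D$ with that permuted color order. By Theorem \ref{thm:KGalternative} this is equivalent to (KG1)--(KG3), but (KG4) is the cleaner target here because the construction was organized around producing commuting squares.

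Since (KG4) for arbitrary paths follows from the two-color case together with (KG0) and associativity, I would first reduce to analyzing bi-color paths in $G_D^2$, which are precisely the objects whose equivalence was specified by Cases 1--3 of Definition \ref{def:delay}. The first step is to confirm existence and uniqueness of the transposed path in each of these cases: given $\mu = \mu_2\mu_1 \in G_D^2$ with color order $(e_i, e_j)$, I must exhibit a unique $\nu = \nu_2\nu_1 \sim_D \mu$ with color order $(e_j, e_i)$. In Case 1 this is inherited directly from the factorization property of $\Lambda$ via the inclusion $\iota$; in Case 2 it follows from tracking how a commuting square of $\Lambda$ is stretched into a rectangle, as in Figure \ref{fig:Case2DelayDef}; and in Case 3 it follows from the $3$-cube structure in $\Lambda$ pictured in Figure \ref{fig:Case3DelayDef}, where transposing $e_\beta e_\alpha$ to $e_\gamma e_\delta$ corresponds to reading the opposite faces of the cube. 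For source-freeness, I would argue that each vertex of $\Lambda_D^0$ receives edges of every color: the original vertices in $\iota^{-1}(\Lambda^0)$ inherit this from $\Lambda$ being source-free, and each new vertex $v_g$ receives the edge $g^1$ of color $e_1$ together with the added edges $e_\alpha$ supplying the remaining colors, the latter existing because $\Lambda$ is source-free so every relevant commuting square is present.

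The main obstacle I anticipate is verifying \textbf{well-definedness and uniqueness} when the permuted color order can be realized through genuinely different bi-color compositions, especially the interaction between Case 2 and Case 3 paths, and the consistency of $\sim_D$ across overlapping $3$-cubes. Concretely, one must check that the recursive definition of $\E^{e_1}$ and the assignment of added edges $e_\alpha$ do not produce two distinct paths of the same permuted color order in a single class $[\lambda]_D$, nor fail to produce one. This is where the associativity condition (KG3) is genuinely at stake: in a path touching several delayed edges, one must confirm that the two ``routes'' for transposing colors (as in the (KG3) diagram) yield the same terminal path in $G_D^*$. I expect this to reduce, after the bi-color analysis, to invoking associativity and the factorization property in the ambient $k$-graph $\Lambda$, so that the consistency of $\sim_D$ on length-$3$ paths is inherited from the corresponding $3$-cubes and $3$-color configurations in $\Lambda$.

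Once (KG4) is established for bi-color paths and the source-free condition is verified, I would conclude by an inductive argument: (KG0) propagates the two-color uniqueness to paths of arbitrary length, so $\sim_D$ satisfies (KG4) in full, and Theorem \ref{thm:KGalternative} then yields that $\Lambda_D = G_D^*/\sim_D$ is a $k$-graph. Combined with source-freeness, this gives the claim that $\Lambda_D$ is a source-free $k$-graph. I would keep the bookkeeping of range and source maps explicit throughout, since the correctness of each transposition rests on the paths having matching endpoints in $G_D$, which the definitions of $r_D, s_D$ on the added edges $e_\alpha$ and the split edges $g^1, g^2$ are precisely designed to guarantee.
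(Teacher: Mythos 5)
Your overall skeleton matches the paper's: note that $\sim_D$ satisfies (KG0)--(KG2) by construction, so the content is associativity of tri-color paths, handled by cases according to which of the new edges ($g^1, g^2 \in \E_D^{e_1}$ or $e_\alpha \in \bigcup_{i\geq 2}\E_D^{e_i}$) appear; your source-freeness argument is also the paper's. However, there is a genuine gap in how you propose to settle the hardest case. You expect the consistency of the two (KG3) routes to be ``inherited from the corresponding $3$-cubes and $3$-color configurations in $\Lambda$.'' That works when the tri-color path contains an edge of $\E_D^{e_1}$ (the paper's Case 2, where the path's colors include $e_1$ and the path is a child of a $3$-cube in $\Lambda$), but it fails for the remaining case: a tri-color path $e_\alpha e_\beta e_\gamma$ consisting entirely of new edges from $\bigcup_{i=2}^k \E_D^{e_i}$. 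There the three colors of the path are all distinct from $e_1$, while each of $\alpha, \beta, \gamma$ is a commuting square in $\Lambda$ whose vertical edges have degree $e_1$; the configuration in $\Lambda$ underlying the path therefore involves \emph{four} colors. The paper's proof resolves this by observing that the three linked squares determine a unique $4$-dimensional cube in $\Lambda$, and that every intermediate square produced by either route of (KG3) must lie on that $4$-cube in a determined position -- so the two routes agree. No argument confined to $3$-cubes in $\Lambda$ can substitute here, since the uniqueness being invoked is precisely the factorization property of $\Lambda$ for morphisms of degree $e_1 + e_i + e_j + e_\ell$.

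A secondary omission: your plan needs the structural fact (which the paper proves en route) that a tri-color path in $G_D$ containing some $e_\alpha$ but no edge of $\E_D^{e_1}$ must consist \emph{entirely} of edges from $\bigcup_{i=2}^k \E_D^{e_i}$, since $r_D(e_\alpha), s_D(e_\alpha)$ lie in the new vertices $\{v_g\}$ while edges of $\iota^{-1}(\Lambda^1)$ have endpoints in $\iota^{-1}(\Lambda^0)$. Without this, your case analysis of bi-color and tri-color paths is not exhaustive, and the interaction ``between Case 2 and Case 3 paths'' that you flag as a worry is in fact vacuous -- mixed paths of that kind do not exist, which is what makes the paper's trichotomy clean.
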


\begin{proof}
Let $(\Lambda, d)$ be a \textit{k}-graph, and let $(\Lambda_D, d_D)$ be the graph of $\Lambda$ delayed at the edge $e \in \Lambda^{e_1}$. Since $\sim_D$ satisfies (KG0), (KG1), and (KG2) by construction,  it suffices to show that $\sim_D$ satisfies (KG3). Let $\mu = \mu_3 \mu_2 \mu_1 \in G_D^3$ be a tri-colored path. Consider the following cases.
\\

Case 1: Assume $\mu_j \notin \bigcup\limits_{i=1}^k \E_D^{e_i}$ for all $j \in \{1,2,3\}$. Then $\iota(\mu)$ is a 3-colored path in $\Lambda$.  Since we defined $[\mu]_D = \iota^{-1}([\iota(\mu)])$, the fact that $\Lambda$ satisfies (KG3) --  hence, that $\iota(\mu)$ uniquely determines a 3-cube in $\Lambda$ -- implies that $\mu$ also gives rise to a well-defined 3-cube in $\Lambda_D$. 
\\

Case 2: Assume $\mu_j \in \E_D^{e_1}$ for one $j \in \{ 1,2,3\}$.  
Then $\mu$ has one of the forms of tri-colored paths on the right-hand side of Figure \ref{fig:Case2DelayKG}. This follows from Definition \ref{def:delay}, specifically the restrictions for when an edge in $\iota^{-1}(\Lambda^1)$ can precede or follow an edge in $\E_D^{e_1}$, and when an edge in $\E_D^{e_i}$ can precede or follow an edge in $\E_D^{e_1}$. 

For example, suppose $\mu_2 = g^2$ for some $g \in \mathcal E^{e_1}$.  Then $s_D(\mu_2) = r_D(\mu_1) = v_g$, so $\mu_1$ must be of the form $e_{\gamma}$ for some $\gamma \in \E^{e_i}$ with $\gamma = [g q]$.  Then $r_D(\mu_2) = s_D(\mu_3) = \iota^{-1}(r(g))$, and since $d(\mu_3) \not\in \{e_1, e_i\}$ we must have $\mu_3 
\in \iota^{-1}(\Lambda^{e_j})$ for some $j \not= i$.  But then, $\iota^{-1}(\mu_3) g q \in G^3$ is a 3-color path, which defines a unique 3-cube in $\Lambda$ (as depicted on the left of Figure \ref{fig:Case2DelayKG}).  It is now  straightforward to check that the two routes for factoring $\mu$ in $\Lambda_D$ (as in (KG3)) arise as ``children'' of this cube in $G_D$, so the fact that $\Lambda$ is a $k$-graph implies that the two routes for factoring $\mu$ in $\Lambda_D$ lead to the same result.  A similar analysis of the other possibilities for having one edge $\mu_j \in \E^{e_1}_D$  reveals that whenever this occurs, the factorization of $\mu$ in $\Lambda_D$ satisfies (KG3).

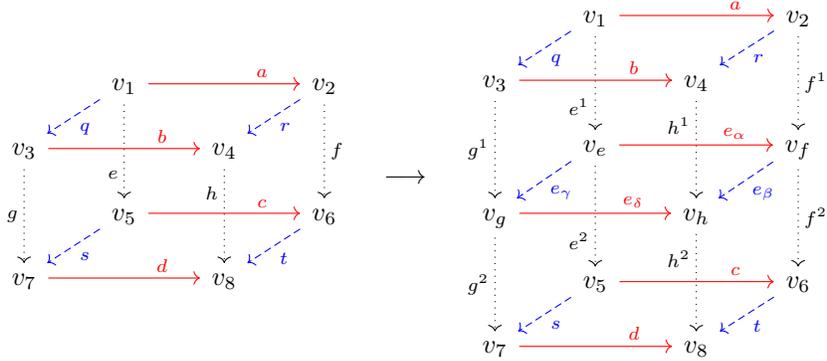
\begin{figure}[H]
\begin{tikzcd}[scale=0.8,column sep=.7cm,row sep=.4cm]
	 & v_1   \ar[rr, near end, red, "a"]\ar[dd, black, dotted, near end, "e"'] \ar[dl, blue, dashed, "q"] & & v_2  \ar[dl, blue, dashed, "r"] \ar[dd, black, dotted, "f"]\\
	v_3 \ar[rr, red, near end, "b"]\ar[dd, black, dotted, "g"']  & & v_4  \ar[dd, black, near start, dotted, "h"'] & \\
	& v_5 \ar[dl, blue, dashed, "s"] \ar[rr, red, near end, "c"]  & & v_6  \ar[dl, blue, dashed, "t"] \\
	v_7 \ar[rr, near end, red, "d"] & & v_8 
\end{tikzcd}
$\quad \longrightarrow \quad$
\begin{tikzcd} [column sep=.7cm,row sep=.4cm]
& v_1 \ar[rr, near end, red, "a"] \ar[dd, black, dotted, near end, "e^1"'] \ar[dl, blue, dashed, "q"] & & v_2 \ar[dl, blue, dashed, "r"] \ar[dd, black, dotted, "f^1"]\\
v_3 \ar[dd, black, dotted, "g^1"'] \ar[rr, red, near end, "b"] & & v_4 \ar[dd, black, dotted, near start, "h^1"'] & \\
& v_e \ar[rr, near end, red, "e_\alpha"] \ar[dd, black, dotted, near end, "e^2"'] \ar[dl, blue, dashed, "e_\gamma"] & & v_f \ar[dl, blue, dashed, "e_\beta"] \ar[dd, black, dotted, "f^2"]\\
v_g \ar[dd, black, dotted, "g^2"'] \ar[rr, red, near end, "e_\delta"] & & v_h \ar[dd, black, dotted, near start, "h^2"'] & \\
& v_5 \ar[dl, blue, dashed, "s"] \ar[rr, red, near end, "c"] & & v_6 \ar[dl, blue, dashed, "t"] \\
v_7 \ar[rr, near end, red, "d"] & & v_8 &
\end{tikzcd}
    \caption{A commuting cube in $G$ and its ``children" in $G_D$, when $e,f,g,h \in \E_D^{e_1}$}
    \label{fig:Case2DelayKG}
\end{figure}

We now observe that if a tri-colored path without an edge from $\E^{e_1}_D$ contains  an edge from $\bigcup_{i=2}^k \E^{e_i}_D,$ it must consist entirely of edges in $\bigcup_{i=2}^k \E^{e_i}_D$.  To see this, suppose that a tri-colored path contains $e_\gamma$ for a commuting square $\gamma \in \Lambda$, but that $\mu$ contains no edges in $\E^{e_1}_D$.   Since $s(\gamma), r(\gamma) \in \Lambda^0_D \backslash \iota^{-1}(\Lambda^0)$, the edge(s) preceding and following $e_\gamma$ must be of the form $e_\alpha$ for some $\alpha \in \E^{e_i}$. Repeating the argument for $e_\alpha$ if necessary shows that $\mu$ consists entirely of edges in $\bigcup_{i=2}^k \E^{e_i}_D$.

Thus, the only remaining case is

Case 3: Assume $\mu_j \in \bigcup\limits_{i=2}^k \E_D^{e_i}$ for all $j \in \{ 1,2,3\}$, and without loss of generality assume $\mu = e_\alpha e_\beta e_\gamma$ is a blue-red-green path. 
Because of the definition of $s_D, r_D$ for edges of the form $e_\lambda$ in $G_D^1$, $\mu \in G_D^*$ arises from a sequence of commuting squares $\alpha, \beta, \gamma$ in $\Lambda$ which share edges in $\E^{e_1}$.  Figure \ref{fig:Case3DelayKG1} below depicts (from left to right) the squares $\gamma, \beta, \alpha \in \Lambda$.  The color in each square $\lambda$ reflects the color of its horizontal edges, as these determine the degree of the edge $e_\lambda \in G_D^1$.

\begin{figure}[H]
\includegraphics[width=2in]{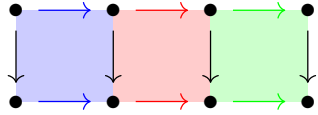}
    \caption{Factorization squares in $\Lambda$ that will be delayed to produce $\mu$}
    \label{fig:Case3DelayKG1}
\end{figure}

Because $\Lambda$ is a $k$-graph, the rectangle in Figure \ref{fig:Case3DelayKG1}, which is reproduced in the top line of Figure 
8,
determines a unique 4-dimensional cube in $\Lambda$.  Thus, as we follow Route 1 of (KG3) and the instructions given in Case 3 of Definition \ref{def:delay} to factor 
\[e_\alpha e_\beta e_\gamma = e_\alpha e_\eta e_\kappa = e_\delta e_\epsilon e_\kappa = e_\delta e_\phi e_\lambda,\]
the squares $\delta \phi \lambda$ -- and indeed all of the intermediate squares -- must lie on the 4-dimensional cube determined by $\alpha \beta \gamma$.  To be precise, $\delta \phi \lambda$ is  the collection of green-red-blue squares on the left of the bottom row of Figure 
8.  Similarly, when we factor $e_\alpha e_\beta e_\gamma$ via Route 2 of (KG3), we obtain the green-red-blue squares on the right of the bottom row of Figure 
8.  Because these squares lie on the same 4-cube as $\delta \phi \lambda$, and in the same position (compare the position of $\nu_0$ on both), they must equal $\delta \phi \lambda$. It follows that applying either Route 1 or Route 2 to $e_\alpha e_\beta e_\gamma$ gives us  the same 3-colored path in $G_D$.

\begin{figure}[H]
\begin{tikzcd} [column sep=.7cm, row sep=.7cm]
	& |[alias = A]| \bullet \ar[r, blue] \ar[d, black, near start, "\nu_0" '] & |[alias = C]| \bullet \ar[d, black] \ar[r, red] & |[alias = E]| \bullet \ar[d, black] \ar[r, green] & |[alias = H]| \bullet \ar[d, black] \\
	& |[alias = B]| \bullet \ar[r, blue, near start, "\nu_1" '] & |[alias = D]| \bullet \ar[r, red, near start, "\nu_2" '] & |[alias = F]| \bullet \ar[r, green, near start, "\nu_3" '] & |[alias = G]| \bullet
\end{tikzcd}

$\swarrow \hspace{3cm}\searrow$

\begin{tikzcd}[column sep=.3cm,row sep=.3cm]
& \bullet \ar[dl, blue] \ar[rr, red] \ar[dd, black, near start, "\nu_0"] & & \bullet \ar[dl, blue] \ar[dd, black] & \\
\bullet \ar[rr, red] \ar[dd, black] & & \bullet \ar[rr, green] \ar[dd, black] & & \bullet \ar[dd, black] \\
& \bullet \ar[rr, red] \ar[dl, blue, near start, "\nu_1", '] & & \bullet \ar[dl, blue] &  \\
\bullet \ar[rr, red, "\nu_2"] & & \bullet \ar[rr, green, "\nu_3"] & & \bullet  
\end{tikzcd}
\hspace{3cm}
\begin{tikzcd}[column sep=.3cm,row sep=.3cm]
& & & \bullet \ar[rr, red] \ar[dd, black]  &  & \bullet \ar[dd, black] \\
\bullet \ar[rr, blue] \ar[dd, black, , "\nu_0"] & & \bullet \ar[ur, green] \ar[rr, red] \ar[dd, black] & & \bullet \ar[ur, green] \ar[dd, black]  &  \\
& & & \bullet \ar[rr, red] & & \bullet \\
\bullet \ar[rr, blue, "\nu_1"] & & \bullet \ar[rr, red, "\nu_2"] \ar[ur, green] & & \bullet \ar[ur, green, "\nu_3", '] &  
\end{tikzcd}

$\downarrow \hspace{7cm} \downarrow$

\begin{tikzcd}[column sep=.3cm,row sep=.3cm]
& \bullet  \ar[dl, blue] \ar[rr, red] \ar[dd, black, near start, "\nu_0"] & & \bullet \ar[dl, blue] \ar[rr, green] \ar[dd, black] & & \bullet \ar[dd, black] \ar[dl, blue] \\    
\bullet \ar[rr, red] \ar[dd, black] & & \bullet \ar[rr, green] \ar[dd, black] & & \bullet \ar[dd, black]  &\\
& \bullet \ar[rr, red] \ar[dl, blue, near start, "\nu_1", '] & & \bullet \ar[dl, blue] \ar[rr, green] & & \ar[dl, blue] \\
\bullet \ar[rr, red, "\nu_2"] & & \bullet \ar[rr, green, "\nu_3"] & & \bullet & 
\end{tikzcd}
\hspace{3cm}
\begin{tikzcd}[column sep=.3cm,row sep=.3cm]
& \bullet \ar[rr, blue] \ar[dd, black] & & \bullet \ar[rr, red] \ar[dd, black]  & & \bullet \ar[dd, black] \\    
\bullet \ar[rr, blue] \ar[dd, black, "\nu_0"] \ar[ur,green]  & & \bullet \ar[rr, red] \ar[dd, black] \ar[ur,green] & & \bullet \ar[dd, black] \ar[ur,green]  &\\
& \bullet \ar[rr, blue] & & \bullet \ar[rr, red] & & \bullet \\
\bullet \ar[rr, blue, "\nu_1"] \ar[ur,green] & & \bullet \ar[rr, red, "\nu_2"] \ar[ur,green] & & \bullet \ar[ur,green, "\nu_3", '] & 
\end{tikzcd}

$\downarrow \hspace{7cm} \downarrow$

\begin{tikzcd}[column sep=.25cm,row sep=.25cm,
execute at end picture={
  \begin{pgfonlayer}{background}
  \foreach \Nombre in  {A,B,C,D,E,F,G,H}
    {\coordinate (\Nombre) at (\Nombre.center);}
  \fill[green!20] 
    (A) -- (B) -- (D) -- (C) -- cycle;
      \fill[red!20] 
    (D) -- (C) -- (E) -- (F) -- cycle;
    \fill[blue!20] 
    (E) -- (F) -- (G) -- (H) -- cycle;
    \end{pgfonlayer}
}
]
& & & & & |[alias = C]| \bullet \ar[drr, red] \ar[ddd, black] & & & \\
& |[alias = A]| \bullet \ar[urrrr, green]  \ar[dl, blue] \ar[rrr, red] \ar[ddd, black, "\nu_0"'] & & & \bullet \ar[dl, blue] \ar[rrr, green] \ar[ddd, black] & & & |[alias = E]| \bullet \ar[ddd, black] \ar[dl, blue] \\    
 \bullet \ar[rrr, red] \ar[ddd, black] & & & \bullet \ar[rrr, green] \ar[ddd, black]  & & & |[alias = H]| \bullet \ar[ddd, black]   &\\
& & & & &  |[alias = D]| \bullet \ar[drr , red] &  & &  \\ 
& |[alias = B]| \bullet \ar[urrrr, green] \ar[rrr, red] \ar[dl, blue, near start , "\nu_1"' ] & & & \bullet \ar[dl, blue] \ar[rrr, green] & & & |[alias = F]| \bullet \ar[dl, blue] \\
\bullet \ar[rrr, red, "\nu_2"']  & & & \bullet \ar[rrr, green, "\nu_3"'] & & & |[alias = G]| \bullet & 
\end{tikzcd}
$~~~=~~~$
\begin{tikzcd}[column sep=.2cm,row sep=.2cm,
execute at end picture={
  \begin{pgfonlayer}{background}
  \foreach \Nombre in  {A,B,C,D,E,F,G,H}
    {\coordinate (\Nombre) at (\Nombre.center);}
  \fill[green!20] 
    (A) -- (B) -- (D) -- (C) -- cycle;
      \fill[red!20] 
    (D) -- (C) -- (E) -- (F) -- cycle;
    \fill[blue!20] 
    (E) -- (F) -- (G) -- (H) -- cycle;
    \end{pgfonlayer}
}
]
& & & & & |[alias = E]| \bullet \ar[drr, blue] \ar[ddd, black] & & & \\
& |[alias = C]| \bullet \ar[urrrr, red]  \ar[rrr, blue] \ar[ddd, black] & & &  \bullet \ar[rrr, red] \ar[ddd, black] & & & |[alias = H]| \bullet \ar[ddd, black]  \\    
|[alias = A]| \bullet \ar[ur, green] \ar[rrr, blue] \ar[ddd, black, "\nu_0"'] & & &  \bullet \ar[rrr, red] \ar[ddd, black] \ar[ur, green] & & & \bullet \ar[ddd, black] \ar[ur, green]  &\\
& & & & & |[alias = F]| \bullet \ar[drr , blue] &  & &  \\ 
&  |[alias = D]| \bullet \ar[urrrr, red] \ar[rrr, blue] & & & \bullet \ar[rrr, red] & & & |[alias = G]| \bullet \\
|[alias = B]| \bullet \ar[rrr, blue, "\nu_1"'] \ar[ur, green]  & & & \bullet \ar[rrr, red, "\nu_2"'] \ar[ur, green]  & & &  \bullet \ar[ur, green, "\nu_3"'] & 
\end{tikzcd}

\label{fig:Case3DelayKG2}
\caption{Associativity  in $\Lambda_D$ via factorization squares in $\Lambda$}
\end{figure}
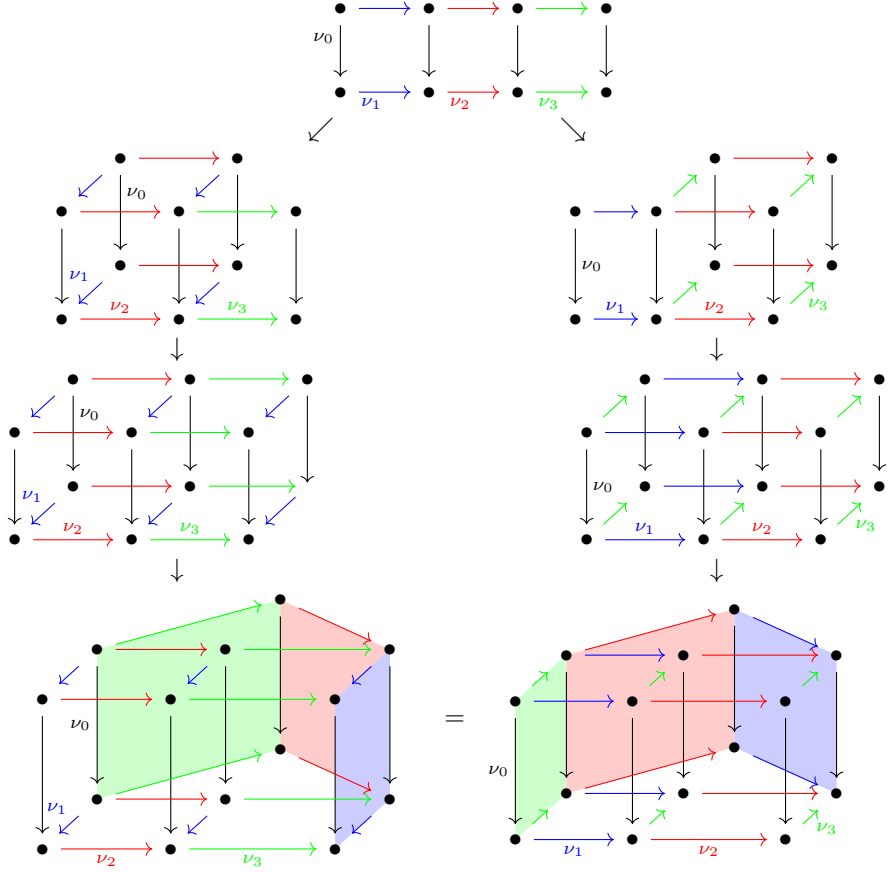
Having confirmed that  the factorization of an arbitrary  tri-colored path in $G_D$ satisfies (KG3), we conclude that $\Lambda_D$ is a $k$-graph.

It remains to check that $\Lambda_D$ is source-free and row-finite whenever $\Lambda$ is.  In constructing $\Lambda_D$, all newly-created vertices $v_g$ have an edge $g^2$ of color 1 emanating from them.  Moreover,  since $r(g)$ is not a source in $\Lambda$, there is an edge $b_i \in \Lambda^{e_i} r(g)$ for each $i \geq 2$.  Then, $[b_i g] \in \E^{e_i}$ and hence $e_{[b_i g]}\in \Lambda^{e_i}_D v_g$ for all $i \geq 2$.  In other words, all of the new vertices $v_g$ emit at least one edge of each color.

Similarly, every vertex $v \in \iota^{-1}(\Lambda^0)$ emits an edge of each color, because the same is true in $\Lambda$; if $v$ emits an edge $ g \in \E^{e_1}$ then $s_D(g^1) = \iota^{-1}(s(g)) = v$, and all other edges emitted by $\iota(v)$ are in $\iota(\Lambda_D^1)$ and hence also occur in  $\Lambda_D$.

Furthermore, the number of edges with range $v$ in $\Lambda_D$ is the same as the number of edges with range $\iota(v) \in \Lambda$, if $v \not= v_g$.  In this setting, an edge in $v \Lambda_D^1 \backslash \iota^{-1}(v\Lambda^1_D)$ is necessarily of the form $g^2$ for some $g \in v\E^{e_1}$, so 
\[ | r_D^{-1}(v)| = |v\E^{e_1}| + |\iota^{-1}(v\Lambda^1_D)| = |r^{-1}(\iota(v))|.\]
If $v = v_g$, then  $r_D^{-1}(v)$ is still finite as long as $\Lambda$ is row-finite:
\begin{align*} |r_D^{-1}(v_g) | &= \left| \{ g^1\} \cup \bigcup_{i=2}^k v_g \E^{e_i}_D \right| \leq 1 + \left| \{ \alpha \in \Lambda: \alpha = [g b] \text{ for some } b \in \Lambda^1 \} \right| \\
&= 1 + r^{-1}(s(g)) < \infty.  \end{align*}
We conclude that $\Lambda_D$ is a row-finite, source-free $k$-graph whenever $\Lambda$ is.
\end{proof}

\begin{theorem}
\label{thm:DelayME}
Let $(\Lambda, d)$ be a row-finite, source-free \textit{k}-graph and let $(\Lambda_D,d_D)$ be the graph of $\Lambda$ delayed at an edge $f$. Then $C^*(\Lambda_D)$ is Morita equivalent to $C^*(\Lambda)$.
\end{theorem}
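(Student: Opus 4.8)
The plan is to realize $C^*(\Lambda)$ as a full corner of $C^*(\Lambda_D)$ and invoke Theorem~\ref{thm:allen}. Identify $\Lambda^0$ with its image $X \subseteq \Lambda_D^0$ under $\iota$. Every new vertex $v_g$ (for $g \in \E^{e_1}$) is the source of the edge $g^2$, whose range $r_D(g^2) = r(g)$ lies in $X$; hence heredity alone forces $v_g \in \Sigma(X)$, so $\Sigma(X) = \Lambda_D^0$. By Theorem~\ref{thm:allen}, $P_X C^*(\Lambda_D) P_X$ is Morita equivalent to $C^*(\Lambda_D)$, and it remains to produce an isomorphism $C^*(\Lambda) \cong P_X C^*(\Lambda_D) P_X$.

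To build this isomorphism I would exhibit a Cuntz--Krieger $\Lambda$-family inside the corner. Writing $\{p_w, t_e\}$ for the canonical generators of $C^*(\Lambda_D)$, set $P_v = p_v$ for $v \in \Lambda^0$, let $T_e = t_e$ for every edge $e$ that survives the delay unchanged, and let $T_g = t_{g^2} t_{g^1}$ for each delayed edge $g \in \E^{e_1}$; each such element lies in $P_X C^*(\Lambda_D) P_X$ because its range and source projections are indexed by $X$. Conditions (CK1) and (CK3) are immediate (for (CK3) on a delayed edge one uses $t_{g^1}^* p_{v_g} t_{g^1} = p_{s(g)}$). The key relation is (CK2): given a commuting square in $\Lambda$, if it involves no color-$e_1$ edge, or its color-$e_1$ edges avoid $\E^{e_1}$, then all four edges survive unchanged and Case~1 of Definition~\ref{def:delay} yields the identity directly; if its two color-$e_1$ edges $g, h$ lie in $\E^{e_1}$ (by the recursive definition of $\E^{e_1}$ they are necessarily both in or both out), then the square $bg \sim ha$ splits in $\Lambda_D$ into the two Case~2 relations $b g^2 \sim_D h^2 e_{[bg]}$ and $e_{[bg]} g^1 \sim_D h^1 a$, whose composition telescopes $T_b T_g = t_b t_{g^2} t_{g^1}$ to $t_{h^2} t_{h^1} t_a = T_h T_a$. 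For (CK4), the identity $t_{g^1} t_{g^1}^* = p_{v_g}$ (valid since $g^1$ is the unique color-$e_1$ edge into $v_g$) collapses $T_g T_g^* = t_{g^2} t_{g^2}^*$, after which (CK4) for the family follows from (CK4) in $\Lambda_D$ at each original vertex.

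By the universal property this family induces a $*$-homomorphism $\phi : C^*(\Lambda) \to P_X C^*(\Lambda_D) P_X$. For injectivity I would apply Theorem~\ref{thm:GIUT}: since $\phi(p_v) = p_v \ne 0$, it suffices to produce a $\mathbb{T}^k$-action on $C^*(\Lambda_D)$ intertwining the canonical action $\alpha$ on $C^*(\Lambda)$. Here the subtlety of delay appears, since $\phi(t_g) = t_{g^2} t_{g^1}$ has $d_D$-degree $2e_1$, so the standard gauge action cannot intertwine $\phi$. Instead I would use Lemma~\ref{lemma:Action} with the functor $R : \Lambda_D \to \mathbb{Z}^k$ determined by $R(g^1) = 0$, $R(g^2) = e_1$ for $g \in \E^{e_1}$, and $R = d_D$ on all other edges; one checks $R$ descends to $\sim_D$ by verifying it is constant on each Case~1, Case~2, and Case~3 relation (e.g.\ $R(g^2) = R(h^2) = e_1$ makes $R$ agree across $b g^2 \sim_D h^2 e_{[bg]}$). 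The resulting action $\beta$ satisfies $\beta_z \circ \phi = \phi \circ \alpha_z$, so $\phi$ is injective.

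The main obstacle is surjectivity, i.e.\ showing $\text{Im}(\phi)$ exhausts the corner, which is densely spanned by the elements $t_\mu t_\nu^*$ with $r_D(\mu), r_D(\nu) \in X$. I would first use $p_{v_g} = t_{g^1} t_{g^1}^*$ to rewrite any path $\mu$ sourced at a new vertex $v_g$ as $t_\mu = t_{\mu g^1} t_{g^1}^*$, reducing $t_\mu t_\nu^*$ to $t_{\mu g^1} t_{\nu g^1}^*$ where now $\mu g^1$ and $\nu g^1$ both begin and end in $X$. It then suffices to prove $t_\sigma \in \text{Im}(\phi)$ whenever $s_D(\sigma), r_D(\sigma) \in X$, which I would do by induction on $|\sigma|$: the source edge of such a $\sigma$ is either an unchanged edge $e$ (peel off $T_e$), or a half-edge $g^1$, in which case the next edge is either $g^2$ (peel off $T_g = t_{g^2} t_{g^1}$) or some $e_\alpha$, where the Case~2 relation $e_\alpha g^1 \sim_D h^1 a$ lets me replace the source segment by the original edge $a$ and peel off $T_a$. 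Each step strictly shortens $\sigma$, so the induction closes and $\text{Im}(\phi) = P_X C^*(\Lambda_D) P_X$. Combining $C^*(\Lambda) \cong P_X C^*(\Lambda_D) P_X$ with Theorem~\ref{thm:allen} then yields the desired Morita equivalence.
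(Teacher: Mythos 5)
Your proposal is correct and takes essentially the same route as the paper's proof: the same Cuntz--Krieger $\Lambda$-family ($T_g = t_{g^2}t_{g^1}$ on delayed edges, $t_e$ elsewhere), the same modified gauge action (fixing $t_{g^1}$, i.e.\ $R(g^1)=0$, $R=d_D$ otherwise) combined with Theorem~\ref{thm:GIUT} for injectivity, and the same corner $P_X C^*(\Lambda_D) P_X$ with $X = \iota^{-1}(\Lambda^0)$, including the identical heredity argument giving $\Sigma(X) = \Lambda_D^0$ for Theorem~\ref{thm:allen}. The only cosmetic difference is in showing the image exhausts the corner: you peel edges off the source of a path inductively, while the paper first rewrites each path into a normal form free of the edges $e_\alpha$; both rest on exactly the same Case~2 relations of Definition~\ref{def:delay}.
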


\begin{proof}
Let $\{ t_\lambda : \lambda \in \Lambda_D^1 \cup \Lambda_D^0 \}$ be the canonical Cuntz--Krieger $\Lambda_{D}$-family generating $C^*(\Lambda_{D})$. Define 
\begin{equation*}
\begin{split}
&S_{v} = t_{v}, \text{ for } v \in \Lambda^{0}, \\
&S_{h} = \left\{ \begin{array}{ll}
t_{\iota^{-1}(h)} & \text{if } h \notin \E^{e_{1}}\\
t_{h^{2}}t_{h^1} & \text{if } h \in \E^{e_{1}}
\end{array} \right. \text{, for } h \in \Lambda^{1}.
\end{split}
\end{equation*}
\\

We claim that $\{S_\lambda : \lambda \in \Lambda^0 \cup \Lambda^1 \}$ is a Cuntz--Krieger $\Lambda$-family in $C^*(\Lambda_D)$. Note that since $\{t_v: v \in \Lambda^0_D \}$ are mutually orthogonal projections, so are $\{S_{v}: v \in \Lambda^0 \}$. Therefore $\{S_\lambda : \lambda \in \Lambda^0 \cup \Lambda^1 \}$ satisfies (CK1). Now take arbitrary $a,b,g,h \in \Lambda^1$ such that $ah \sim gb$. Assuming $a,b,g,h \notin \E^{e_1}$,  Case 1 of Definition \ref{def:delay} implies that
\begin{equation*}
S_a S_h = t_{\iota^{-1}(a)} t_{\iota^{-1}(h)} = t_{\iota^{-1}(g)} t_{\iota^{-1}(b)} = S_g S_b.
\end{equation*}
Conversely, suppose either $a,b \in \E^{e_1}$, or $g,h \in \E^{e_1}$. Without loss of generality assume $g,h \in \E^{e_1}$. Then  $\alpha:= [ah] \in \E_{D}^{e_{i}}$ satisfies $e_{\alpha} h^1 \sim_D g^1 b$ and $a h^2 \sim_D g^2 e_\alpha$. Hence
\begin{equation*}
\begin{split}
S_a S_h
= t_a t_{h^2} t_{h^1} 
= t_{g^2} t_{e_{\alpha}} t_{h^1} 
= t_{g^2} t_{g^1} t_b
= S_gS_b.
\end{split}
\end{equation*}
Therefore $\{S_\lambda : \lambda \in \Lambda^0 \cup \Lambda^1 \}$ satisfies (CK2). 

For (CK3), let $h \in \Lambda^1 \setminus \E^{e_{1}}$; observe that $S_{h} = t_{h}$, and hence $
S_h^* S_h = t_h^* t_h = t_{s_D(h)} = S_{s(h)}.$
If $h \in \E^{e_{1}}$, we similarly have $
S_h^* S_h = t_{h^1}^* t_{h^2}^* t_{h^2} t_{h^1} = 
 t_{s_D(h)} = S_{s(h)}.$
Therefore $\{S_\lambda : \lambda \in \Lambda^0 \cup \Lambda^1 \}$ satisfies (CK3). 

Finally, take  an arbitrary $v \in \Lambda^0$. Observe that if $h \in \E^{e_1}$, then $h^1$ is the only edge in $\Lambda_D$ such that $d_D(h_1) = e_1$ and $r_D(h_1) = v_h$. Thus $t_{v_h} = t_{h^1} t_{h^1}^*$ and consequently
\begin{equation*}
\begin{split}
\sum_{h \in r^{-1}(v) \cap \Lambda^{e_1}} S_h S_h^* 
&= \sum_{\substack{h \in r^{-1}(v) \cap \Lambda^{e_1} \\ h \notin \E^{e_1}}} t_{\iota^{-1}(h)} t_{\iota^{-1}(h)}^* + \sum_{\substack{h \in r^{-1}(v) \cap \Lambda^{e_1} \\ h \in \E^{e_1}}} t_{h^2} t_{h^1} t_{h^1}^* t_{h^2}^* \\
&= \sum_{\substack{h \in r^{-1}(v) \cap \Lambda^{e_{1}} \\ h \notin \E^{e_1}}} t_{\iota^{-1}(h)} t_{\iota^{-1}(h)}^* + \sum_{\substack{h \in r^{-1}(v) \cap \Lambda^{e_{1}} \\ h \in \E^{e_1}}} t_{h^2} t_{h^2}^* \\
&= \sum_{e \in r^{-1}_D(v) \cap \Lambda^{e_{1}}_D}t_{e}t_{e}^*
= t_{v} 
= S_{v}.
\end{split}
\end{equation*}
Now, if $2 \leq i \leq k$, the fact that $v \in \Lambda^0$ means that  $\iota^{-1}(v) \Lambda^{e_i}_D = \iota^{-1}(v \Lambda^{e_i})$: there are no edges of the form $h^j$ or $e_\alpha$ with range $v$ and degree $e_i$.  Consequently,
\begin{equation*}
\sum_{h \in \Lambda^{e_{i}}} S_{h}S_{h}^*
= \sum_{h \in v\Lambda^{e_{i}}} t_{h}t_{h}^* 
= t_{v} 
= S_{v}. 
\end{equation*}
Therefore $\{S_\lambda : \lambda \in \Lambda^0 \cup \Lambda^1 \}$ satisfies (CK4), and hence is a Cuntz--Krieger $\Lambda$-family in $C^*(\Lambda_{D})$. By the universal property of $C^*(\Lambda)$, there exists a homomorphism $\pi:C^*(\Lambda) \rightarrow C^*(\Lambda_D)$.

To see that that $\pi$ is injective, we use Theorem \ref{thm:GIUT} and Lemma \ref{lemma:Action}. Define $\beta: \mathbb{T}^{k} \to \text{Aut}(C^*(\Lambda_D))$ by setting, for all $z \in {\mathbb T}^k$,
\begin{equation*}
\begin{split}
  &\beta_z(t_h) = z^{d(h)} t_h \text{, for } h \neq g^1, \\
  &\beta_z(t_h) = t_h \text{, for } h = g^1, \\
  &\beta_z(t_v) = t_v \text{, for } v \in \Lambda^0,
\end{split}
\end{equation*}
and extending $\beta$ to be linear and multiplicative.
By taking $E =\{ g^1: g \in \E^{e_1} \}$ and applying Lemma \ref{lemma:Action}, we see that $\beta$ is an action of $\mathbb{T}^k$ on $C^*(\Lambda_D)$. Let $\alpha$ be the canonical gauge action on $C^*(\Lambda) = C^*(\{ s_\lambda: \lambda \in \Lambda^0 \cup \Lambda^1\})$ and note that for $e \notin \E^{e_1}$ we have
\begin{equation*}
\pi[\alpha_z(s_{e})] = \pi[z^{d(e)}s_{e}] = z^{d(e)}S_{e} = z^{d(e)}t_{e} = \beta_z(t_{e}) = \beta_z(T_{e}) = \beta_z(\pi[s_{e}])
\end{equation*}
and for $e \in \E^{e_1}$,
\begin{equation*}
\pi[\alpha_z(s_{e})] = \pi[z^{d(e)}s_{e}] = z^{d(e)}S_{e} = z^{d(e)}t_{e^2}t_{e^1} = \beta_z(t_{e^2}t_{e^1}) = \beta_z(T_{e}) = \beta_z(\pi[s_{e}]). 
\end{equation*}
It is straightforward to check that $\alpha$ and $\beta$ commute on the vertex projections. Therefore, $\beta$ commutes with the canonical gauge action, so Theorem \ref{thm:GIUT} implies that $\pi$ is injective. 

To see that $\text{Im}(\pi) \cong C^*(\Lambda)$ is Morita equivalent to $C^*(\Lambda_D)$, we invoke Theorem \ref{thm:allen}.   Set $X = \iota^{-1}(\Lambda^0) \subseteq \Lambda^0_D$; we will show that the saturation $\Sigma(X)$ of $X$ is  $\Lambda^0_D$.  If  $g \in \E^{e_1}$, then $r(g) \in \iota(\Lambda_D^0)$ and $g^2 \in \iota^{-1}(r(g)) \Lambda_D$.  Therefore if $H$ is hereditary and contains $X$, we must have $s_D(g^2) = v_g \in H$ for all $g \in \E^{e_1}$.  Consequently, $H = \Lambda_D^0$.  Since $\Lambda^0_D$ is evidently saturated, we have $\Sigma(X) = \Lambda^0_D$ as claimed.  Theorem \ref{thm:allen} therefore implies that 
\[ P_X C^*(\Lambda_D) P_X \cong_{ME} C^*(\Lambda_D).\]

We will now complete the proof that $C^*(\Lambda) \cong_{ME} C^*(\Lambda_D)$ by showing that 
\[ P_X C^*(\Lambda_D) P_X = \text{Im}(\pi)\cong C^*(\Lambda).\]  The generators $\{ S_\lambda: \lambda \in \Lambda^0 \cup  \Lambda^1\}$ of $\text{Im}(\pi)$ all satisfy $P_X S_h P_X = S_h$, so $\text{Im}(\pi) \subseteq P_X C^*(\Lambda_D) P_X$. For the other inclusion, note that 
\[ P_X C^*(\Lambda_D) P_X = \overline{\text{span}}\, \{ t_\lambda t_\mu^*: \lambda, \mu \in G_D^*,\ s_D(\lambda) = s_D(\mu) ,\  r(\lambda), r(\mu) \in X\}.\]
Given $\lambda \in G_D^*$ with $r_D(\lambda) \in X$, create $\lambda' \in [\lambda]_D$ by first replacing any path of the form $g^2 e_{[ah]}$ in $\lambda$ with its equivalent $a h^2$, and then replacing paths of the form $e_{[gb]} h^1$ with their equivalent  $g^1 b$. Note  that since $r(\lambda) = r_D(\lambda) \in X$ we cannot have $\lambda_{|\lambda|} \in \E^{e_i}_D$ for any $i\not= 1.$  Because of this, $\lambda'$ will contain no edges in $\bigcup_{i=2}^k  \E^{e_i}_D$.

Thus, in $\lambda'$, any occurrence of an edge of the form $g^2$ will be preceded by $g^1$ unless $s_D(\lambda') \not \in X$ (in which case, if $s_D(\lambda') = v_g, \ \lambda'_1 = g^2$). 
Consequently, if $s_D(\lambda') \in X$, then $ t_{\lambda'_{|\lambda|}} \cdots t_{\lambda'_2} t_{\lambda'_1} = t_{\lambda'} = t_{\lambda}$ is a product of operators of the form $S_h, S_k^*$ for $h, k \in \Lambda^1$.

Similarly, given $\mu \in G^*_D$ with $r_D(\mu) \in X$ and $s_D(\mu) = s_D(\lambda)$, create $\mu' \in [\mu]_D$ by the procedure above, so that $\mu'$ contains no edges in $\bigcup_{i=2}^k \E^{e_i}_D$ and any edge in $\mu'$ of the form $g^2$ (with the possible exception of $\mu'_1$) is preceded by $g^1$.  It follows that for any  $\mu, \lambda$ with  $r_D(\mu), r_D(\lambda) \in X$ and $s_D(\mu) = s_D(\lambda) \in X$ we have $t_\lambda t_\mu^* \in \text{Im}(\pi)$.

If $t_\lambda t_\mu^* \in P_X C^*(\Lambda_D) P_X$ and $s_D(\lambda)\not\in X$, write $v_g = s_D(\lambda)$. As observed earlier, in this case we must have $\lambda'_1 = \mu'_1 = g^2.$  Moreover, since $v_g$ receives precisely one edge of degree $e_1$ (namely $g^1$) we have $p_{v_g} = t_{g^1} t_{g^1}^*$.  It follows that 
\[ t_\lambda t_\mu^* = t_{\lambda'} t_{\mu'}^* = t_{\lambda' g^1} t_{\mu' g^1}^*.\]
Observe that neither $\lambda' g^1$ nor $\mu' g^1$ contains any edge in $\bigcup_{i=2}^k \E^{e_i}_D$, and every occurrence of an edge of the form $h^2$ in either path is preceded by $h^1$.  Thus, in this case as well we can write $t_\lambda t_\mu^*$ as a product of operators of the form  $S_h, S_k^*$.  

Since $\text{Im}(\pi)$ is norm-closed, it follows that every element in  $\overline{\text{span}} \{ t_\mu t_\lambda^*: \lambda, \mu \in \Lambda_D, r(\lambda) = r(\mu) \in X \} = P_X  C^*(\Lambda_D)P_X$ lies in $\text{Im}(\pi)$.
Thus, 
\[C^*(\Lambda) \cong \text{Im}(\pi) = P_X C^*(\Lambda_D) P_X \cong_{ME} C^*(\Lambda_D),\] as claimed.
\end{proof}

\section{Sink Deletion} \label{sec:Sink}

In this section, we analyze the effect on $C^*(\Lambda)$ of deleting a {\em sink}  -- a vertex which emits no edges of a certain color -- from $\Lambda$.  This should be viewed as the analogue of Move \texttt{(S)}, removing a regular source, for directed graphs, as the conventions used to define a Cuntz--Krieger family in \cite{errs, sorensen-first} differ from the standard conventions for higher-rank graph $C^*$-algebras.  We show in Theorem \ref{thm:SinkKG} 
that the result of deleting a sink from a $k$-graph is still a $k$-graph, and Theorem \ref{thm:SinkME} shows that the resulting $C^*$-algebra is Morita equivalent to the original  $k$-graph $C^*$-algebra.

\begin{definition}\label{def:Sink} Let $\Lambda$ be a \textit{k}-graph.
We say $v \in \Lambda^0$ is an $e_i$ \textit{sink} if $s^{-1}(v) \cap \Lambda^{e_i} = \emptyset$ for some $1 \leq i \leq k$. We say $v$ is a \textit{sink} if it is an $e_i$ sink for some $1 \leq i \leq k$.
\end{definition}

\begin{definition}\label{def:Sink}
Let $(\Lambda, d)$ be a \textit{k}-graph. Let $G = (\Lambda^0, \Lambda^1, r, s)$, and $G^*$ be its 1-skeleton and category of paths respectively. Let $v \in \Lambda^0$ be a sink. We write $w \leq v$ if there exists a $\lambda \in G^*$ such that $s(\lambda) = v$ and $r(\lambda) = w$. Define the directed colored graph $G_S = (\Lambda^0_S, \Lambda^1_S, r_S, s_S)$ by 
\begin{equation*}
\begin{split}
& \Lambda^0_S := \{ w: w \not\leq v\}, \qquad 
\Lambda^1_S : =  \Lambda^1 \setminus \{ f \in \Lambda^1:  r(f) \leq v \};
\end{split}
\end{equation*}
we set $r_S = r, s_S = s, d_S = d$.
Let $\iota: G^*_S \to G^*$ be the inclusion map, and define an equivalence relation on $G_S^*$ by
$\mu \sim \lambda$ when $[\iota(\mu)] = [\iota(\lambda)] \in \Lambda$. Define $\Lambda_S = G^* / \sim$ and call $\Lambda_S$ the \textit{k}-\textit{graph of} $\Lambda$ \textit{with the sink }$v$ \textit{deleted.}
\end{definition}

\begin{example}
The graphs $G$ and $G_S$ after deleting the blue sink $v$, where blue is the dashed color.
\begin{center}
\begin{tikzcd}[column sep=1cm,row sep=1cm]
G & v \ar[loop above, red] \ar[d, red] & \ar[l, dashed, blue] \bullet \ar[loop above, red] \ar[d, red] \ar[loop right, dashed, blue]\\
\ar[loop left, dashed, blue] \bullet \ar[r, dashed, blue] & \bullet & \ar[l, dashed, blue] \bullet \ar[loop right, dashed, blue]\\
\ar[loop left, dashed, blue] \bullet \ar[u, red] \ar[r, dashed, blue] \ar[loop below, red] & \bullet \ar[u, red] \ar[loop below, red]
\end{tikzcd}
\hspace{10mm}
\begin{tikzcd}[column sep=1cm,row sep=1cm]
G_S &  & \bullet \ar[loop above, red] \ar[d, red] \ar[loop right, dashed, blue]\\
\ar[loop left, dashed, blue] \bullet & & \bullet \ar[loop right, dashed, blue]\\
\ar[loop left, dashed, blue] \bullet \ar[u, red] \ar[r, dashed, blue] \ar[loop below, red] & \bullet \ar[loop below, red]
\end{tikzcd}
\end{center}
\end{example}
\begin{example} \label{ex:sinkintroduction} The following example highlights the fact that performing a sink-deletion may introduce new sinks.  When $\Lambda$ has a finite vertex set, performing successive sink deletions will eventually produce a sink-free \textit{k}-graph.
 \begin{figure}[H]
\label{fig:sinkintroduction}
\begin{tikzcd}[column sep=1cm,row sep=.5cm]
G \\
\ar[out=135, in=-135, distance=4em,red] \ar[out=150, in=-150, distance=2em, dashed, blue] \bullet \ar[r, bend left, red] \ar[r, bend right, dashed, blue]
& w \ar[r, bend left, red] \ar[r, bend right, dashed, blue]
& v
\end{tikzcd}
\hspace{10mm}
\begin{tikzcd}[column sep=1cm,row sep=.5cm]
G_S \\
\ar[out=135, in=-135, distance=4em,red] \ar[out=150, in=-150, distance=2em, dashed, blue] \bullet \ar[r, bend left, red] \ar[r, bend right, dashed, blue]
& w 
\end{tikzcd}
\caption{Sink deletion at $v$ creating a new sink at $w$.}
\end{figure}
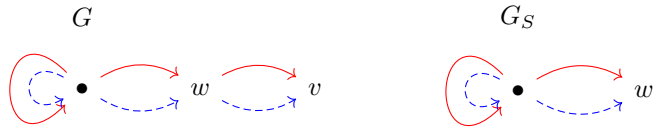
\end{example}
\begin{lemma}\label{lem:Sink}
If $(\Lambda, d)$ is a \textit{k}-graph with $v \in \Lambda^0$ an $e_i$ sink, then  $ \{ x  \in \Lambda^0: x \leq v \}$ consists of $e_i$ sinks.
\end{lemma}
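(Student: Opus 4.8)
The plan is to argue by contradiction, using the factorization rule of Definition \ref{def:kgraphRB} to ``transport'' a hypothetical $e_i$-edge emitted by $x$ back along a connecting path until it becomes an $e_i$-edge emitted by $v$. Concretely, suppose $x \leq v$ but that $x$ is \emph{not} an $e_i$ sink, so that there is an edge $g \in \Lambda^{e_i}$ with $s(g) = x$. By the definition of $\leq$ in Definition \ref{def:Sink}, there is a path $\lambda \in G^*$ with $s(\lambda) = v$ and $r(\lambda) = x$. Since $s(g) = x = r(\lambda)$, the product $g\lambda$ is a well-defined morphism of $\Lambda$ with $s(g\lambda) = s(\lambda) = v$ and $d(g\lambda) = d(\lambda) + e_i$.

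Next I would apply the factorization rule to the degree decomposition $d(g\lambda) = d(\lambda) + e_i$, which yields a unique factorization $g\lambda = \mu\nu$ with $d(\nu) = e_i$ and $d(\mu) = d(\lambda)$. Because the source of a composite sits on the right-hand factor, we have $s(\nu) = s(g\lambda) = v$; and since $|d(\nu)| = 1$, the morphism $\nu$ is a single edge. Thus $\nu \in \Lambda^{e_i}$ with $s(\nu) = v$, so $s^{-1}(v) \cap \Lambda^{e_i} \neq \emptyset$, contradicting the hypothesis that $v$ is an $e_i$ sink. Hence no such edge $g$ can exist, and every vertex $x$ with $x \leq v$ is an $e_i$ sink.

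The argument is essentially a single application of the factorization rule, so there is no genuinely hard step; the only point requiring care is the bookkeeping of the source/range conventions, namely that the source-side factor $\nu$ lies on the \emph{right} of the composite $\mu\nu$, so that the $e_i$-edge split off by the factorization rule is attached at $v$ (the source of $\lambda$) rather than at the far, range end of the path. I would emphasize that no induction on the length of $\lambda$ is needed, since the factorization rule applies to the whole morphism $g\lambda$ at once and immediately produces the desired edge at $v$.
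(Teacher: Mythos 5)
Your proof is correct and follows essentially the same route as the paper: compose the hypothetical $e_i$-edge at $x$ with a path from $v$ to $x$, then use the factorization rule to split off an $e_i$-edge at the source end, contradicting that $v$ is an $e_i$ sink. The paper merely phrases this contrapositively (``there exists a path $\mu g \sim f\lambda$ with $g \in \Lambda^{e_i}$''), whereas you argue by contradiction and spell out the degree bookkeeping explicitly; the substance is identical.
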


\begin{proof}
If  $w \leq v $  and $w$ is not an $e_i$ sink, then there exists a $y \in \Lambda^0$ and $f \in \Lambda^{e_i}$ such that $s(f) = w$ and $r(f) = y$. Thus there exists a path $f \lambda \in s^{-1}(v) \cap r^{-1}(y).$ Furthermore, since $\Lambda$ is a \textit{k}-graph there exists a path $\mu g \sim f \lambda$ with $g \in \Lambda^{e_i}$. That is, $v$ is not an $e_i$ sink.
\end{proof}

\begin{theorem}\label{thm:SinkKG}
If $(\Lambda, d)$ is a source-free \textit{k}-graph with $v \in \Lambda^0$ a sink then $(\Lambda_S, d_S)$, the graph of $\Lambda$ with the sink $v \in \Lambda^0$ deleted, is a source-free \textit{k}-graph.
\end{theorem}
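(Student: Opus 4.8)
The plan is to invoke the alternative characterization of $k$-graphs, Theorem~\ref{thm:KGalternative}: it suffices to check that $\sim$ is an $(r,s,d)$-preserving equivalence relation on $G_S^*$ satisfying (KG0) and (KG4), and then to verify source-freeness directly. The structural fact driving everything is that the deleted vertex set $D := \{w \in \Lambda^0 : w \leq v\}$ is closed under following emitted edges forward: if $w \in D$ and $g$ is emitted by $w$, then $r(g) \in D$. Equivalently, the surviving vertex set $\Lambda^0_S$ is closed under taking sources. I would record this as the key lemma that $f \in \Lambda^1_S \Rightarrow s(f) \in \Lambda^0_S$: were $s(f) \leq v$ via a path $\eta$ with $s(\eta) = v$ and $r(\eta) = s(f)$, then $f\eta$ would witness $r(f) \leq v$, contradicting $f \in \Lambda^1_S$. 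This simultaneously shows that $r_S, s_S$ land in $\Lambda^0_S$ (so $G_S$ is a genuine edge-colored graph), and, by induction along a path, that $\iota(G_S^*) = \{\lambda \in G^* : r(\lambda) \not\leq v\}$.

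Next I would use the fact that $\sim$ on $\Lambda$ preserves ranges to see that the construction merely restricts $\Lambda$ to a union of entire equivalence classes. Indeed, if $\mu \in G^*$ satisfies $\mu \sim \iota(\lambda)$ for some $\lambda \in G_S^*$, then $r(\mu) = r(\lambda) \not\leq v$, so $\mu \in \iota(G_S^*)$ by the characterization above; hence every $\sim$-class meeting $\iota(G_S^*)$ lies wholly inside it, and $\iota$ induces a bijection between the $\sim$-classes on $G_S^*$ and the corresponding classes in $\Lambda$. In particular $\sim$ is a well-defined $(r,s,d)$-preserving equivalence relation on $G_S^*$, and (KG0) transports verbatim from $\Lambda$: given $\lambda = \lambda_2\lambda_1 \in G_S^*$ and $p_i \sim \lambda_i$, applying (KG0) in $\Lambda$ to $\iota(\lambda) = \iota(\lambda_2)\iota(\lambda_1)$ yields $[\iota(p_2)\iota(p_1)] = [\iota(\lambda)]$, i.e.\ $[p_2p_1] = [\lambda]$ in $\Lambda_S$.

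For (KG4), take $\lambda \in G_S^*$ and any permutation of its color order. Since $\Lambda$ is a $k$-graph it satisfies (KG4), so there is a unique $\mu' \in [\iota(\lambda)]$ in $\Lambda$ with the permuted color order; by the previous paragraph $\mu' = \iota(\mu)$ for a unique $\mu \in G_S^*$, and this $\mu \in [\lambda]$ realizes the permuted color order, with uniqueness inherited from $\Lambda$. Theorem~\ref{thm:KGalternative} then gives that $\Lambda_S$ is a $k$-graph. Source-freeness is almost immediate and should not be conflated with the sinks that Example~\ref{ex:sinkintroduction} shows can appear: for $w \in \Lambda^0_S$ and any color $i$, source-freeness of $\Lambda$ (Notation~\ref{not:Source}) gives $f \in \Lambda^{e_i}$ with $r(f) = w$, and since $r(f) = w \not\leq v$ this edge survives into $\Lambda^1_S$, so $w$ still receives an edge of every color. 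The one genuinely delicate point is that re-factoring a path (permuting its color order) never pushes it out of $G_S^*$; this is exactly the range-preservation observation, and it is the reason $\leq$, and hence the deleted set, is defined through ranges. (Lemma~\ref{lem:Sink} is not needed for this particular argument, though it clarifies that the deleted vertices are precisely $e_i$-sinks.)
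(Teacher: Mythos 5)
Your proposal is correct and follows essentially the same route as the paper's proof: both rest on the observation that $r(\lambda)\not\leq v$ forces every path $\mu\sim\iota(\lambda)$ (and its source and intermediate vertices) to avoid the deleted set, so that $[\lambda]_S=[\iota(\lambda)]$ and (KG0), (KG4) transfer verbatim, after which Theorem~\ref{thm:KGalternative} applies. Your source-freeness argument is the same fact as the paper's (an edge is deleted only when its range vertex is), merely phrased positively, and your explicit recording of the key lemma $f\in\Lambda^1_S\Rightarrow s(f)\in\Lambda^0_S$ matches the paper's remark that $s(\eta)\leq v$ implies $r(\eta)\leq v$.
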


\begin{proof}
Take $\lambda \in G_S^*$. Since $r(\iota(\lambda)) \not\leq v$, if  $\mu = \mu_1 \cdots \mu_n \sim  \iota(\lambda)$, then $r(\mu) \not\leq v$.  In fact,  we have $s(\mu) = s(\iota(\lambda)) \not \leq v$, and  $r(\mu_i) \notin V$ for all $1 \leq i \leq n$.  To see this, simply recall that if $s(\eta) \leq v$ then $r(\eta) \leq v$ as well.   Consequently,
 $\mu \in \iota(G_S^*)$ and $\iota^{-1}(\mu) \sim_S \lambda$.  Thus $[\lambda]_S = [\iota(\lambda)]$, which satisfies (KG0) and (KG4) because $\Lambda$ is a $k$-graph. Since $\lambda \in G_S^*$ was arbitrary, it follows that  $\Lambda_S$ is a $k$-graph.
 
 To see that $\Lambda_S$ is source-free, note that 
whenever an edge $e \in G^1$ was deleted in the process of forming $\Lambda_S$, so too was the vertex $r(e) \in G^0$. Therefore, no sources were created in the formation of $\Lambda_S$, so the $k$-graph $\Lambda_S$ is source-free.
\end{proof}

\begin{theorem}\label{thm:SinkME}
If $(\Lambda, d)$ is a source free row finite \textit{k}-graph with $v \in \Lambda^0$ a sink and $(\Lambda_S,d_S)$ the \textit{k}-graph of $\Lambda$ with $v$ deleted, then $C^*(\Lambda)$ is Morita equivalent to $C^*(\Lambda_S)$.
\end{theorem}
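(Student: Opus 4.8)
The plan is to realize $C^*(\Lambda_S)$ as a \emph{full corner} of $C^*(\Lambda)$ and then invoke Allen's Theorem~\ref{thm:allen}, following the template of Theorem~\ref{thm:DelayME}. Write $X := \Lambda_S^0 = \{ w \in \Lambda^0 : w \not\leq v\}$ and $P_X := \sum_{w \in X} p_w \in M(C^*(\Lambda))$. Since $\Lambda_S^1 = \Lambda^1 \setminus \{ f : r(f) \leq v\} \subseteq \Lambda^1$, I can define a candidate Cuntz--Krieger $\Lambda_S$-family inside $C^*(\Lambda)$ by $Q_w := p_w$ for $w \in X$ and $S_f := t_{\iota(f)}$ for $f \in \Lambda_S^1$. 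The two things to establish are that the induced homomorphism $\pi : C^*(\Lambda_S) \to C^*(\Lambda)$ is injective with image exactly $P_X C^*(\Lambda) P_X$, and that $\Sigma(X) = \Lambda^0$; together these give $C^*(\Lambda_S) \cong P_X C^*(\Lambda) P_X \cong_{ME} C^*(\Lambda)$.

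First I would record that $X$ is hereditary: if $w \in X$ and $r(\lambda) = w$ while $s(\lambda) \leq v$, then prepending a path witnessing $s(\lambda) \leq v$ to $\lambda$ shows $w \leq v$, a contradiction. Heredity makes it routine to verify (CK1)--(CK4) for $\{Q_w, S_f\}$: (CK1) and (CK3) are inherited from $C^*(\Lambda)$ (note $s(f) \in X$ whenever $r(f) \in X$); (CK2) holds because the equivalence relation defining $\Lambda_S$ is by construction the restriction of $\sim$ along $\iota$; and (CK4) holds because every edge $f \in \Lambda^{e_i}$ with $r(f) = w \in X$ survives in $\Lambda_S$ (as $r(f) \not\leq v$), so the defining sum at $w$ is unchanged. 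Universality then produces $\pi$. For injectivity I would apply Theorem~\ref{thm:GIUT}: $\Lambda_S$ is row-finite (since $\Lambda_S^1 \subseteq \Lambda^1$) and source-free (Theorem~\ref{thm:SinkKG}), and because $d_S = d$ on $\Lambda_S^1$, the map $\pi$ intertwines the gauge action of $C^*(\Lambda_S)$ with the gauge action of $C^*(\Lambda)$; as $\pi(q_w) = p_w \neq 0$ for every $w \in \Lambda_S^0$, Theorem~\ref{thm:GIUT} forces $\pi$ to be injective.

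The step I expect to carry the real content is $\Sigma(X) = \Lambda^0$. Let $D := \Lambda^0 \setminus X = \{ w : w \leq v\}$, which by Lemma~\ref{lem:Sink} consists entirely of $e_i$ sinks, where $v$ is an $e_i$ sink. The crucial observation is that no edge of color $i$ can point \emph{into} $D$ from within $D$: if $f \in \Lambda^{e_i}$ has $r(f) = u \in D$ and $s(f) \in D$, then $s(f) \leq v$ emits the $e_i$-edge $f$, contradicting that $s(f)$ is an $e_i$ sink. Hence $s(u\Lambda^{e_i}) \subseteq X$ for every $u \in D$, and $u\Lambda^{e_i} \neq \emptyset$ because $\Lambda$ is source-free. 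The saturation axiom of Definition~\ref{def:saturation}, applied with $n = e_i$, therefore places every $u \in D$ in $\Sigma(X)$ in a single step, so $\Sigma(X) = X \cup D = \Lambda^0$ and Theorem~\ref{thm:allen} gives $P_X C^*(\Lambda) P_X \cong_{ME} C^*(\Lambda)$.

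Finally I would identify $\text{Im}(\pi)$ with the corner. The inclusion $\text{Im}(\pi) \subseteq P_X C^*(\Lambda) P_X$ is immediate, since each generator $p_w$ (for $w \in X$) and $t_f$ (for $f \in \Lambda_S^1$, so $r(f), s(f) \in X$) is fixed under compression by $P_X$. For the reverse inclusion I would use that $P_X C^*(\Lambda) P_X$ is the closed span of the elements $t_\mu t_\nu^*$ with $s(\mu) = s(\nu)$ and $r(\mu), r(\nu) \in X$; the point is that if $r(\mu) \in X$ then, by heredity of $X$, every vertex and edge along $\mu$ lies in $X$ and $\Lambda_S^1$ respectively, so $\mu$ is (the image under $\iota$ of) a path in $\Lambda_S$ and $t_\mu t_\nu^* = \pi(s_\mu s_\nu^*) \in \text{Im}(\pi)$. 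Passing to the closed span yields $P_X C^*(\Lambda) P_X = \text{Im}(\pi) \cong C^*(\Lambda_S)$, completing the argument. I expect the only delicate bookkeeping to be this last identification of the corner with the image --- specifically, checking that a spanning element surviving compression by $P_X$ can be rewritten using only the edges retained in $\Lambda_S$ --- but this should be considerably cleaner than the analogous step in Theorem~\ref{thm:DelayME}, since sink deletion only removes edges and vertices and never introduces the auxiliary edges $e_\alpha$ that complicated the delay case.
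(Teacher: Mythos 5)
Your proposal is correct and follows essentially the same route as the paper's proof: the inclusion-induced Cuntz--Krieger $\Lambda_S$-family $\{p_w, t_{\iota(f)}\}$, universality plus the gauge-invariant uniqueness theorem for injectivity of $\pi$, the saturation argument $\Sigma(\Lambda_S^0)=\Lambda^0$ via Lemma~\ref{lem:Sink} and source-freeness, and the identification $\text{Im}(\pi)=P_XC^*(\Lambda)P_X$ via heredity of $X$. The only cosmetic difference is that you isolate the heredity of $X$ as an explicit preliminary observation, whereas the paper invokes the equivalent fact ($r(\lambda)\not\leq v \Rightarrow s(\lambda)\not\leq v$) inline where needed.
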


\begin{proof}
Let $\{ s_\lambda : \lambda \in \Lambda^0 \cup \Lambda^1 \}$ be the canonical Cuntz-Krieger $\Lambda$-family in $C^*(\Lambda)$.  Then for every $\lambda \in \Lambda^0_S \cup \Lambda^1_S$ define 
\begin{equation*}
T_\lambda = s_{\iota(\lambda)}. 
\end{equation*}
We first prove that $\{T_\lambda : \lambda \in \Lambda_S^0 \cup \Lambda^1_S \}$ is a Cuntz-Krieger $\Lambda_S$-family in $C^*(\Lambda)$. Note that $\{ s_x : x \in \Lambda^0 \}$ are non-zero and mutually orthogonal, and thus so are $\{ T_x : x \in \Lambda_S^0 \}$. Therefore $\{T_\lambda : \lambda \in \Lambda_S^0 \cup \Lambda^1_S \}$ satisfies (CK1). Since $\{s_\lambda : \lambda \in \Lambda^0 \cup \Lambda^1 \}$ is a Cuntz-Krieger $\Lambda$-family in $C^*(\Lambda)$, the fact that $fg \sim_S h j $ iff $\iota (fg) = \iota(f) \iota(g) \sim \iota(h) \iota (j) = \iota(hj)$ tells us that if  for $fg \sim_S hj $, then 
\begin{equation*}
    T_f T_g = s_{\iota(f)} s_{\iota(g)} = s_{\iota(h)} s_{\iota(k)} = T_h T_k,
\end{equation*}
and therefore $\{T_\lambda : \lambda \in \Lambda_S^0 \cup \Lambda^1_S \}$ satisfies (CK2). Also, for $f \in \Lambda^1$ we have
\begin{equation*}
T_f^*T_f 
= s_{\iota(f)}^* s_{\iota(f)} 
= s_{\iota(s(f))} 
= T_{s(f)},
\end{equation*}
and therefore $\{T_\lambda : \lambda \in \Lambda_S^0 \cup \Lambda^1_S \}$ satisfies (CK3). Finally note that for every $f \in \Lambda^{1}$, if $r(f) \not\leq v$ then $s(f) \not\leq v$. Thus for every $x \in \Lambda_S^0$, since $x$ was not deleted, $r^{-1}(\iota (x)) = \iota (r_S^{-1}(x))$. So, for every basis vector $e_i$ of $\N^k$ we have
\begin{equation*}
T_x = s_{\iota(x)}
= \sum\limits_{\substack{d{(\iota(\lambda))} = e_{i} \\ r(\iota(\lambda)) = x }} s_{\iota(\lambda)}s^*_{\iota(\lambda)}
= \sum\limits_{\substack{d{_S (\lambda)} = e_{i} \\ r_S(\lambda) = x }} T_{\lambda}T^*_{\lambda}.
\end{equation*}
Thus (CK4) is satisfied, so $\{T_\lambda : \lambda \in \Lambda^0_S \cup \Lambda^1_S \}$ is a Cuntz-Krieger $\Lambda_S$ family in $C^*(\Lambda)$. By the universal property of $C^*(\Lambda_S)$, then, there exists a $*$-homomorphism $\pi:C^*(\Lambda_S) \rightarrow C^*(\Lambda)$ such that $\pi(t_\lambda) = T_\lambda$ for any $\lambda \in \Lambda^0_S \cup \Lambda^1_S$. Observe that $\pi$ commutes with the canonical gauge actions on $C^*(\Lambda)$ and $C^*(\Lambda_S)$; moreover, $\pi(t_x) \not= 0$ for any $x \in \Lambda^0_S$. Consequently,  the gauge invariant uniqueness theorem (Theorem \ref{thm:GIUT}) tells us that $\pi$ is injective.

We now invoke Theorem \ref{thm:allen} to show that $\text{Im}(\pi) \cong_{ME} C^*(\Lambda)$. Consider $X = \iota(\Lambda_S^0) \subseteq \Lambda^0$, and 
set $p = \sum\limits_{x \in \Lambda_S^0} p_{\iota(x)}$. 
We claim that $\Sigma(X) = \Lambda^0$, so that Theorem \ref{thm:allen} implies that $p C^*(\Lambda) p \cong_{ME} C^*(\Lambda)$.  To see this, recall from Lemma \ref{lem:Sink} that every vertex in $\Lambda^0 \backslash X$ is an $e_i$ sink.  Moreover, the fact that $\Lambda$ is source-free implies that if $w \in \Lambda^0$ then $w \Lambda^{e_i}$ is nonempty.    Since $s(w\Lambda^{e_i}) \subseteq X$, it follows that every $w \in \Lambda^0$ lies in $\Sigma(X)$, as claimed.  

We now show that $p C^*(\Lambda) p \cong \text{Im}(\pi)$.  To that end, observe that
\begin{align*}
 p C^*(\Lambda) p&  = \overline{\text{span}} \{ s_\lambda s_\mu^*: r(\lambda), r(\mu) \in X = \iota(\Lambda_S^0)\} =\overline{\text{span}} \{ s_\lambda s_\mu^*: r(\lambda), r(\mu) \not\leq v\} .
 \end{align*}
Moreover, if  $r(\lambda) \not\leq  v$ then we must have $s(\lambda) \not \leq v$.
It follows that if $s_\lambda s_\mu^* \in p C^*(\Lambda) p$, then $s_\lambda, s_\mu \in\text{Im}(\pi)$.  Similarly, every generator $s_\lambda$ of $\text{Im}(\pi)$ lies in $p C^*(\Lambda) p$.  We conclude that, as desired, 
\[  C^*(\Lambda_S) \cong \text{Im}(\pi) = p C^*(\Lambda) p \cong_{ME} C^*(\Lambda). \qedhere\]
\end{proof}

\section{Reduction} \label{sec:Reduction}

In the geometric classification of unital graph $C^*$-algebras, the ``delay'' operation does not appear.  Instead, we find its quasi-inverse {\em reduction} in the  final list \cite{errs} of moves on graphs which encode all Morita equivalences between graph $C^*$-algebras.   Indeed, reduction -- rather than delay -- was a central ingredient in \cite{sorensen-first}, and it is more easily recognized as a special case of the general result of \cite{crisp-gow}.

For directed graphs, any delay can be undone by a reduction.
As we will see in the following pages, however, reduction for higher-rank graphs is not evidently an inverse to the ``delay'' move discussed in Section \ref{sec:delay}. For this reason we have elected to include a detailed treatment of both moves.

For row-finite directed graphs, reduction contracts an edge $e$ to its source vertex $v$, and can occur whenever $s^{-1}(v) = \{ e\}$ and all edges with range $v$ emanate from the same vertex $x \not= v$.  In the setting of higher-rank graphs, we can only reduce {\em complete edges} (see Notation \ref{not:CompleteEdge} below) which emanate from a vertex $v$ such that $r^{-1}(v)$ is also a complete edge.  Under these restrictions, however, reduction of a complete edge in $\Lambda$ results in a new $k$-graph $\Lambda_R$ such that $C^*(\Lambda) \cong_{ME} C^*(\Lambda_R)$. (See Theorems \ref{thm:ReductionKG} and \ref{thm:ReductionME} below.)

\begin{notation}\label{not:CompleteEdge}
Let $(\Lambda, d)$ be a $k$-graph.
We say a collection of edges, $E \subseteq \Lambda^1$, is a {\em complete edge} if it has the following three properties:
\begin{enumerate}
\item $E$ contains precisely one edge of each color;
\item  $s(e) = s(f)$ and $r(e) = r(f)$ for every $e,f \in E$; 
\item  if $e \in E$ and $a, b, f \in \Lambda^1$ satisfy $ea \sim fb$ or $ae \sim bf$, then $f \in E$.
\end{enumerate}
\end{notation}

\begin{example}
The third condition in Notation \ref{not:CompleteEdge} depends on the factorization rules.  For example, consider the edge-colored directed graph below.

\[
\begin{tikzpicture}[scale=1.5]
\node[inner sep=0.5pt, circle] (v1) at (0,0) {$v$};
\node[inner sep=0.5pt, circle] (v2) at (1.5,0) {$w$};
\draw[-latex, red, dashed] (v1) edge [out=30, in=150] (v2); 
\draw[-latex, blue] (v1) edge [out=-30, in=210] (v2); 
\node at (0.65, 0.12) {\color{black} $e_2$};
\node at (0.85, -0.12) {\color{black} $f_2$};
\draw[-latex, blue] (v1) edge [out=140, in=190, loop, min distance=15, looseness=2.5] (v1);
\draw[-latex, red, dashed] (v1) edge [out=120, in=210, loop, min distance=40, looseness=2.5] (v1);
\draw[-latex, red, dashed] (v2) edge [out=-40, in=10, loop, min distance=15, looseness=2.5] (v2);
\draw[-latex, blue] (v2) edge [out=-60, in=30, loop, min distance=40, looseness=2.5] (v2);
\node at (-0.6, 0.1) {$f_1$}; \node at (-1,0.3) {$e_1$}; \node at (2.45, 0.2) {$f_3$}; \node at (2.05, 0) {$e_3$};
\end{tikzpicture}
\]
If we define $f_2 e_1 \sim e_2 f_1$ and $f_3 e_2 \sim e_3 f_2$, then each set $\{ e_i, f_i\}$ is a complete edge, for $i= 1,2,3$.  However, if we instead define $f_2 e_1 \sim e_3 f_2$ and $f_3 e_2 \sim e_2 f_1$, then there are no complete edges.
\end{example}

\begin{definition}\label{def:Reduction}
Let $(\Lambda, d)$ be a $k$-graph and $G = (\Lambda^0, \Lambda^1, r, s)$ its 1-skeleton.  Fix $v \in \Lambda^0$ such that both $ \Lambda^1 v$ and $v \Lambda^1$ are complete, and such that $v \not =  r( \Lambda^1 v)=: w $. Define the directed colored graph $G_R = (\Lambda_R^0, \Lambda_R^1, r_R, s_R)$ by 
\begin{equation*}
\begin{split}
&\Lambda_R^0 = \Lambda^0 \setminus \{ v \}, \\
&\Lambda_R^1 = \Lambda^1 \setminus  \Lambda^1 v, \\
&s_R(e) = s(e), \\
&r_R(e) = \left\{ \begin{array}{ll} 
r(e) & \text{if } r(e) \neq v \\
w & \text{if } r(e) = v.
\end{array} \right.
\end{split}
\end{equation*}
As the vertices and edges of $G_R$ are subsets of the vertices and edges of $G$, we write $\iota: \Lambda_R^1 \cup \Lambda_R^0  \to \Lambda^0 \cup \Lambda^1$ for the inclusion map.

Let $G_R^*$ be the path category of $G_R$; we will define a parent function $par: G_R^* \rightarrow G^*$. To that end, fix an edge $f \in  \Lambda^1 v$ and define
\begin{equation*}
\begin{split}
& par(x) = \iota(x) \text{, for } x \in \Lambda_R^0, \\
& par(e) = \left\{ \begin{array}{ll}
\iota(e) & \text{if } r(\iota(e)) \neq v \\
f \, \iota(e) & \text{if } r(\iota(e)) = v
\end{array} \right.
\text{, for } e \in \Lambda_R^1 , \\
& par(\lambda) = par(\lambda_{|\lambda|}) \cdots par(\lambda_1) \text{, for } \lambda = \lambda_{|\lambda|} \cdots \lambda_2 \lambda_1 \in G_R^* .\\
\end{split}
\end{equation*}
Then define the degree map $d_{R}$ on $G_R^*$ such that $d_{R}(e) = d(\iota(e))$. 
Define an equivalence relation, $\sim_R$, on $G_R^*$ by $\mu \sim_R \lambda$ if $par(\mu) \sim  par(\lambda)$. Let $\Lambda_R = G_R^* / \sim_R$; we  call $\Lambda_R$ the {\em graph of $\Lambda$ reduced at} $v \in \Lambda^0$.  
\end{definition}

\begin{example}
 Figures \ref{ex:Reduction1} and \ref{ex:Reduction2} show the result of reduction at a vertex $v$ in  two different $k$-graphs.  In both cases, we only picture the underlying 1-skeleton, as we have no choice in the factorization.

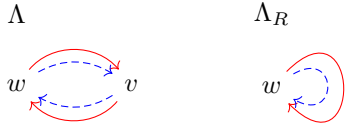
\begin{figure}[H]
\begin{tikzcd}[column sep=1cm,row sep=.5cm]
\Lambda \\
w \ar[r, bend left=50, red] \ar[r, bend left=30, dashed, blue]
& v \ar[l, bend left=50, red] \ar[l,bend left=30, dashed, blue]
\end{tikzcd}
\hspace{10mm}
\begin{tikzcd}[column sep=1cm,row sep=.5cm]
\Lambda_{R} \\
w \ar[out=45, in=-45, distance=4em, red] \ar[out=30, in=-30, distance=2em, dashed, blue]
\end{tikzcd}
    \caption{First example of reduction}
    \label{ex:Reduction1}
\end{figure}

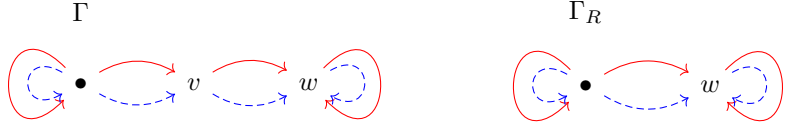
\begin{figure}[H]
\begin{tikzcd}[column sep=1cm,row sep=.5cm]
\Gamma \\
\ar[out=135, in=-135, distance=4em,red] \ar[out=150, in=-150, distance=2em, dashed, blue] \bullet \ar[r, bend left, red] \ar[r, bend right, dashed, blue]
& v \ar[r, bend left, red] \ar[r, bend right, dashed, blue]
& w \ar[out=45, in=-45, distance=4em, red] \ar[out=30, in=-30, distance=2em, dashed, blue]
\end{tikzcd}
\hspace{10mm}
\begin{tikzcd}[column sep=1cm,row sep=.5cm]
\Gamma_{R} \\
\ar[out=135, in=-135, distance=4em,red] \ar[out=150, in=-150, distance=2em, dashed, blue] \bullet \ar[r, bend left, red] \ar[r, bend right, dashed, blue]
& w \ar[out=45, in=-45, distance=4em, red] \ar[out=30, in=-30, distance=2em, dashed, blue]
\end{tikzcd}
    \caption{Second example of reduction.}
    \label{ex:Reduction2}
\end{figure}
\end{example}
\begin{theorem}\label{thm:ReductionKG}
If $(\Lambda, d)$ is a row-finite, source-free $k$-graph then $(\Lambda_R, d_R)$, the graph of $\Lambda$ reduced at $v \in \Lambda^0$, is a row-finite source-free $k$-graph.
\end{theorem}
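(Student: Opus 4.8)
The plan is to apply Theorem~\ref{thm:KGalternative}: I will check that $\sim_R$ is an $(r_R,s_R,d_R)$-preserving equivalence relation satisfying (KG0), and then verify (KG4). That $\sim_R$ is an equivalence relation is immediate, since it is the pullback along $par$ of the equivalence relation $\sim$. Preservation of (KG0) is equally direct: $par$ is multiplicative by construction, so if $par(p_i)\sim par(\lambda_i)$ for $i=1,2$ then $par(p_2p_1)=par(p_2)par(p_1)\sim par(\lambda_2)par(\lambda_1)=par(\lambda)$ by (KG0) for $\sim$. Preservation of range and source is a short check against Definition~\ref{def:Reduction}: reading off the leftmost and rightmost edges of $par(\mu)$ one finds $s(par(\mu))=s_R(\mu)$ and $r(par(\mu))=r_R(\mu)$, since the redirection sending range $v$ to $w$ is exactly compensated by the prepended edge $f$, whose range is $w$. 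Thus the corresponding properties of $\sim$ transfer to $\sim_R$.

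The one subtle point in $(r,s,d)$-preservation is the degree. Because $par$ prepends the fixed color-$d(f)$ edge $f$ in front of every edge whose $\Lambda$-range is $v$, it is not degree-preserving; instead $d(par(\mu))=d_R(\mu)+N(par(\mu))\,d(f)$, where $N(p)$ denotes the number of edges of $p$ lying in the complete edge $\Lambda^1 v$. (The inserted copies of $f$ are the only edges of $par(\mu)$ in $\Lambda^1 v$, since every $\iota(\mu_j)$ lies in $\Lambda^1\setminus\Lambda^1 v$.) The key lemma is that $N$ is $\sim$-invariant. I would prove this on a single commuting square $ab\sim a'b'$: first, at most one of the two edges can lie in $\Lambda^1 v$ (two such consecutive edges would require $r(b)=s(a)$ to be simultaneously $v$ and $w$); second, the third defining property of a complete edge in Notation~\ref{not:CompleteEdge}, applied in both directions, shows that $ab$ has an edge in $\Lambda^1 v$ if and only if $a'b'$ does. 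Granting $N$-invariance, $par(\mu)\sim par(\lambda)$ forces $N(par(\mu))=N(par(\lambda))$ and hence $d_R(\mu)=d_R(\lambda)$. This completeness-driven invariance is the heart of the argument.

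To verify (KG4) I would lift to $\Lambda$. Fix $\mu\in G_R^*$, set $x=[par(\mu)]\in\Lambda$, and fix a target permutation $(c_1,\dots,c_\ell)$ of the color order of $\mu$. I build a representative of $x$ greedily from the source: peel the unique color-$c_1$ edge $g_1$ at $s(x)$ (it exists because $\Lambda$ is source-free, and automatically $g_1\notin\Lambda^1 v$ because $s(g_1)=s(x)\neq v$, so $g_1=\iota(\nu_1)$); if $r(g_1)\neq v$ continue from $r(g_1)$, while if $r(g_1)=v$ then completeness of $\Lambda^1 v$ forces the unique color-$d(f)$ edge out of $v$ to be exactly $f$, which I also peel, realizing $par(\nu_1)=f\,\iota(\nu_1)$ and returning me to the vertex $w\neq v$. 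Iterating produces a path $\nu\in G_R^*$ of color order $(c_1,\dots,c_\ell)$ with $par(\nu)\sim x$. Two facts make this work: completeness guarantees that the lift enters $v$ only to exit immediately via $f$, so it genuinely descends to $G_R^*$; and the $\sim$-invariance of $N$ guarantees the bookkeeping closes up, i.e.\ the number of inserted copies of $f$ equals $N(x)$ and the process consumes $x$ exactly (a leftover would have to be a pure color-$d(f)$ tail all of whose edges lie in $\Lambda^1 v$, which is impossible because its first edge emanates from a vertex other than $v$). Uniqueness is forced at each step by uniqueness of factorizations in $\Lambda$ together with $N$-invariance, which rules out spurious passages through $v$. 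Thus (KG4) holds and Theorem~\ref{thm:KGalternative} shows $\Lambda_R$ is a $k$-graph. I expect this descent-and-termination step to be the main obstacle, precisely because it is where the completeness hypotheses must be used.

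Finally, source-freeness and row-finiteness are routine. Deleting $v$ together with the edges $\Lambda^1 v$ removes incoming edges only from $w$, but the redirected complete edge $v\Lambda^1$ now points into $w$ and supplies one edge of every color; every other vertex retains all of its incoming edges, so $\Lambda_R$ receives every color at every vertex and is therefore source-free. (Here the hypothesis $v\neq w$ is used to rule out loops at $v$, so that the redirection is well defined.) Row-finiteness is preserved because we only delete edges and redirect the finitely many edges of $v\Lambda^1$, leaving every in-neighborhood finite.
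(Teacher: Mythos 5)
Your proposal is correct, and at the top level it follows the same strategy as the paper (pull $\sim_R$ back along $par$, verify (KG0), and apply Theorem~\ref{thm:KGalternative} via (KG4)), but your execution of the central step is genuinely different and in one respect more complete. The paper handles (KG4) by a case split: when $par(\lambda)\neq\iota(\lambda)$ it invokes (KG4) in $\Lambda$ for the permuted color order that keeps $d(f)$ frozen in its original slot, asserts tersely that the resulting path $\mu'$ must have $\mu'_{i+1}=f$ (citing (KG0) and (KG1)), and pulls back through $par$; the full justification of that assertion, and of uniqueness against competitors whose redirected edges sit at different positions, really rests on the fact that membership in the complete edge $\Lambda^1 v$ is preserved slot-by-slot under commuting-square swaps --- which is precisely (a positional strengthening of) the invariance lemma you isolate. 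Your route instead makes that lemma explicit ($N(\cdot)$, the count of $\Lambda^1 v$-edges, is $\sim$-invariant, proved on a single square from Notation~\ref{not:CompleteEdge}(3)) and then runs a greedy source-to-range peeling to build the permuted representative, with the same lemma controlling both non-stuckness at $v$ and termination. What your approach buys: it verifies the hypothesis of Theorem~\ref{thm:KGalternative} that the paper never checks, namely that $\sim_R$ is $d_R$-preserving (this is not automatic, since $par$ inflates degrees by $N\,d(f)$, and the inflation on the two sides must match); and your uniqueness argument is airtight, since any competitor's parent is forced, edge by edge, to follow the greedy peeling. What the paper's approach buys is brevity: it gets the whole permuted path in one application of (KG4) in $\Lambda$ rather than edge by edge. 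One small gap to patch in a write-up: your reduction of $N$-invariance to single squares tacitly uses that $\sim$-equivalent paths in $G^*$ are connected by a finite sequence of elementary square swaps; this is true --- it follows from (KG0), (KG2), and (KG4) for $\Lambda$ by decomposing the permutation relating two color orders into adjacent transpositions --- but it deserves a sentence, since it is the bridge from the square-level completeness property to the global invariance you rely on.
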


\begin{proof}
To see that $\sim_R$ satisfies (KG0), suppose that $\lambda = \lambda_2 \lambda_1 \in G_R^*$ and that $\mu_1, \mu_2 \in G_R^*$ satisfy $par(\lambda_i)\sim par(\mu_i)$.  Then the definition of the parent function, and the fact that $\sim$ satisfies (KG0), implies that 
\[ par(\mu_2 \mu_1) = par(\mu_2) par(\mu_1) \sim par(\lambda_2) par(\lambda_1) = par(\lambda).\]
It follows that $\mu_2 \mu_1 \sim_R \lambda$, which establishes (KG0).

 Now, take an arbitrary $\lambda \in G_R^*$ and suppose $\iota(\lambda) = par(\lambda)$.  It follows that $\iota(\lambda)$ never passes through $v$; the fact that both $v \Lambda^1$ and $\Lambda^1 v$ are complete edges therefore implies that no path in $[\iota(\lambda)]$ passes through $v$.
 As $\Lambda$ is a $k$-graph, $ [par(\lambda)]$  satisfies (KG4). Since  $[\lambda]_R =\iota^{-1}( 
[par(\lambda)])$ and $\iota$ is both injective, and onto $\{ \mu \in G^*: v \text{ not on } \mu\} \supseteq [par(\lambda)]$, it follows that $[\lambda]_R$ also satisfies (KG4).  

 If $\lambda = \lambda_{|\lambda|} \cdots \lambda_2 \lambda_1$ and $\iota(\lambda)  \not= par(\lambda)$, then there exists at least one index $1 \leq i \leq |\lambda|$ such that $r_R(\lambda_i) = w $, where $w$ is the range of the complete edge which was deleted to form $\Lambda_R$. For ease of notation, in what follows we will assume that there is only one such index $i$, but the same argument will work if there are several. Let the color order of $\lambda$ be $(m_1, \ldots , m_{|\lambda|})$; then the color order of $par(\lambda)$ is $(m_1, \ldots , m_i, d(f), \ldots , m_{|\lambda|})$. Now $[par(\lambda)]$ satisfies (KG4), so in particular, for each permutation $(n_1, \ldots, n_{|\lambda|})$ of $(m_1, \ldots , m_{|\lambda|})$, there exists a unique path $\mu' \in [par(\lambda)]$ such that 
\[ d(\mu'_j) = \begin{cases}
n_j, & j \leq i \\
d(f), & j = i+1 \\
n_{j-1}, & j > i+1.
\end{cases}  .\]
Since $\mu'$ and $par(\lambda)$ both have an edge of degree $d(f)$ in the $(i+1)$st position, and $\sim$ satisfies (KG0) and (KG1),  we must have $\mu'_{i+1} = (par(\lambda))_{i+1} = f$.  Thus,  $\mu' \in \text{Im}(par)$.  Setting $\mu = par^{-1}(\mu')$, we have $\mu \sim_R \lambda$. The fact that our permutation $(n_1, \ldots, n_{|\lambda|})$ was arbitrary implies that $[\lambda]_R$ includes a  path of every color order; the fact that $par$ is injective implies that such a path is unique.  Thus, $[\lambda]_R$ satisfies (KG4), so
Theorem \ref{thm:KGalternative} tells us that $\Lambda_R$ is a $k$-graph.

To see that $\Lambda_R$ is row-finite, it suffices to observe that $|x \Lambda_R^{e_i}| = |par(x) \Lambda^{e_i} | < \infty$ for all $x \in \Lambda_R^0$ and for all $1 \leq i\leq k$. The fact that $\Lambda_R$ is source-free follows from the analogous fact that $0 \not= | \Lambda^{e_i} par(x)| = |\Lambda^{e_i}_R x|$ for all $x$ and $i$.
\end{proof}

\begin{theorem}\label{thm:ReductionME}
If $(\Lambda, d)$ is a row-finite source-free  $k$-graph, with $(\Lambda_R, d_R)$ the graph of $\Lambda$ reduced at $v \in \Lambda^0$, then $C^*(\Lambda)$ is Morita equivalent to $C^*(\Lambda_R)$.
\end{theorem}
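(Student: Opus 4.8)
The plan is to follow the template of the proofs of Theorems \ref{thm:InsplitIso}, \ref{thm:DelayME}, and \ref{thm:SinkME}: construct an explicit Cuntz--Krieger $\Lambda_R$-family inside $C^*(\Lambda)$ using the parent function, invoke the universal property to obtain a $*$-homomorphism $\pi \colon C^*(\Lambda_R)\to C^*(\Lambda)$, prove $\pi$ is injective via the gauge-invariant uniqueness theorem (Theorem \ref{thm:GIUT}), and finally identify $\mathrm{Im}(\pi)$ with the full corner $P_X C^*(\Lambda) P_X$ for $X=\Lambda^0\setminus\{v\}$, so that Theorem \ref{thm:allen} produces the Morita equivalence. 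Since reduction heuristically inverts the delay move, I expect $C^*(\Lambda_R)$ to embed as a corner of $C^*(\Lambda)$, the reverse direction from the delay proof. Throughout, write $f_i$ (resp.\ $h_i$) for the color-$e_i$ edge of the complete edge $\Lambda^1 v$ (resp.\ $v\Lambda^1$), and let $f=f_{i_0}$ be the fixed edge of Definition \ref{def:Reduction}.

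Let $\{s_\lambda\}$ generate $C^*(\Lambda)$ and set $T_\lambda := s_{par(\lambda)}$ for $\lambda\in\Lambda_R^0\cup\Lambda_R^1$; concretely $T_x=s_x$, $T_e=s_{\iota(e)}$ when $r(\iota(e))\neq v$, and $T_e=s_f\,s_{\iota(e)}$ when $r(\iota(e))=v$. Relations (CK1)--(CK3) follow routinely, using that $par$ carries composable paths to composable paths and that $\mu\sim_R\nu\iff par(\mu)\sim par(\nu)$, so that $T_aT_b=t_{par(ab)}$ is $\sim$-invariant. The delicate relation is (CK4) at $w=r(\Lambda^1 v)$: because $v\Lambda^{e_i}=\{h_i\}$ and $s_{h_i}s_{h_i}^*=p_v$, the single redirected edge $\iota^{-1}(h_i)$ contributes $s_f\,p_v\,s_f^*=s_fs_f^*$, while the edges genuinely ranging at $w$ contribute $p_w-s_{f_i}s_{f_i}^*$. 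Thus (CK4) reduces to the key fact that \emph{all edges of the complete edge $\Lambda^1 v$ have the same range projection}, i.e.\ $s_{f_i}s_{f_i}^*=s_fs_f^*$. I would isolate this as a lemma: completeness of $\Lambda^1 v$ forces the commuting square $f_i h_j\sim f_j h_i$, whence
\[ s_{f_i}s_{f_i}^* = s_{f_i}s_{h_j}s_{h_j}^*s_{f_i}^* = s_{f_j}s_{h_i}s_{h_i}^*s_{f_j}^* = s_{f_j}s_{f_j}^*, \]
using $s_{h_j}s_{h_j}^*=p_v=s_{h_i}s_{h_i}^*$. This is precisely where \emph{both} completeness hypotheses are needed.

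For injectivity the canonical gauge action cannot be used to intertwine, since $par$ is not degree-preserving ($T_e=s_fs_{\iota(e)}$ carries an extra factor of degree $d(f)$). Instead I would build a modified action $\beta$ of $\mathbb{T}^k$ on $C^*(\Lambda)$ via Lemma \ref{lemma:Action}, from the functor $R\colon\Lambda\to\mathbb{Z}^k$ defined on edges by $R(g)=d(g)-d(f)$ if $g\in v\Lambda^1$ and $R(g)=d(g)$ otherwise. One checks $R$ descends to $\Lambda$ by verifying it is constant on commuting squares: for $ab\sim cd$ the two correction terms agree because $r(a)=r(c)$ and, by completeness property (3) of $\Lambda^1 v$, $a\in\Lambda^1 v\iff c\in\Lambda^1 v$ (equivalently $r(b)=v\iff r(d)=v$). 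With this $R$ one gets $\beta_z(T_\lambda)=z^{d_R(\lambda)}T_\lambda$, so $\beta$ intertwines $\pi$ with the canonical gauge action of $C^*(\Lambda_R)$; since $\pi(p_x)=s_x\neq 0$ for every $x\in\Lambda_R^0$, Theorem \ref{thm:GIUT} yields injectivity.

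The main obstacle is the inclusion $P_XC^*(\Lambda)P_X\subseteq\mathrm{Im}(\pi)$; the reverse containment is immediate since each $T_\lambda$ is fixed under compression by $P_X$. Writing $P_XC^*(\Lambda)P_X=\overline{\text{span}}\{s_\mu s_\nu^*: r(\mu),r(\nu)\in X,\ s(\mu)=s(\nu)\}$, I would first right-multiply $\mu,\nu$ by $h_{i_0}$ and use $p_v=s_{h_{i_0}}s_{h_{i_0}}^*$ to reduce to the case $s(\mu),s(\nu)\neq v$, so that it suffices to show $s_\mu\in\mathrm{Im}(\pi)$ whenever $s(\mu),r(\mu)\neq v$. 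Such a $\mu$ neither starts with an edge of $\Lambda^1 v$ nor ends with an edge of $v\Lambda^1$, so every visit of $\mu$ to $v$ occurs as an in--out pair $f_a h_b$, and $\mu$ factors into such pairs together with edges avoiding $v$. Each avoiding edge is some $T_{\iota^{-1}(\cdot)}$, and for each pair I would use the auxiliary identity $s_{f_a}s_f^*=T_{\iota^{-1}(h_a)}\,T_{\iota^{-1}(h_{i_0})}^*\in\mathrm{Im}(\pi)$ (obtained by inserting $p_v=s_{h_{i_0}}s_{h_{i_0}}^*$ and applying the commuting square $f_a h_{i_0}\sim f h_a$) to write $s_{f_a}s_{h_b}=(s_{f_a}s_f^*)\,T_{\iota^{-1}(h_b)}$; this is the step where out-edges other than the fixed $f$ are absorbed into $\mathrm{Im}(\pi)$. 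Hence every spanning element $s_\mu s_\nu^*$ lies in $\mathrm{Im}(\pi)$. Finally $\Sigma(X)=\Lambda^0$: a loop at $v$ would lie in $\Lambda^1 v$ yet have range $v\neq w$, contradicting completeness, so $s(v\Lambda^{e_i})$ is a single vertex $\neq v$ lying in $X$, and the saturation condition of Definition \ref{def:saturation} forces $v\in\Sigma(X)$. Theorem \ref{thm:allen} then gives $P_XC^*(\Lambda)P_X\cong_{ME}C^*(\Lambda)$, and combining this with $C^*(\Lambda_R)\cong\mathrm{Im}(\pi)=P_XC^*(\Lambda)P_X$ completes the proof.
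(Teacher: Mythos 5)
Your proposal is correct and follows the paper's overall template: the Cuntz--Krieger family $T_\lambda = s_{par(\lambda)}$, the reduction of (CK4) at $w$ to the fact that all edges of $\Lambda^1 v$ share the range projection $s_f s_f^*$ (your commuting-square derivation of this lemma is essentially the paper's), injectivity via Lemma \ref{lemma:Action} and Theorem \ref{thm:GIUT} using a gauge action twisted by $-d(f)$ (you place the correction on $v\Lambda^1$ where the paper places it on $\Lambda^1 v$; both choices intertwine $\pi$ with the canonical action on $C^*(\Lambda_R)$), and the identification $\mathrm{Im}(\pi) = P_X C^*(\Lambda) P_X$ with $X = \Lambda^0\setminus\{v\}$ followed by Theorem \ref{thm:allen}. (Beware the notation clash: your $f_i$, $h_i$ are the paper's $h_i$, $g_i$, respectively.) The genuine divergence is in the hard inclusion $P_X C^*(\Lambda) P_X \subseteq \mathrm{Im}(\pi)$. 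The paper's Claim 1 treats products $s_{h_i}s_{h_j}^*$ of out-edges and, when $d(f)\notin\{e_i,e_j\}$, must manufacture an auxiliary edge of color $d(f)$ with source $w$, which forces an appeal to Section \ref{sec:Sink} to assume ``without loss of generality'' that $w$ is not a $d(f)$-sink; its Claim 2 then inducts along paths through $v$. Your route -- factor any path with source and range different from $v$ into $v$-avoiding edges and in--out pairs $f_a h_b$, then absorb each pair via $s_{f_a}s_{h_b} = (s_{f_a}s_f^*)\,T_{\iota^{-1}(h_b)}$ together with the identity $s_{f_a}s_f^* = (s_{f_a}s_{h_{i_0}})(s_f s_{h_{i_0}})^* = T_{\iota^{-1}(h_a)}T_{\iota^{-1}(h_{i_0})}^*$ -- uses only the commuting squares at $v$ and the redirected generators, so it needs no edges out of $w$ and no sink hypothesis at all; likewise your handling of the case $s(\mu)=s(\nu)=v$ (right-multiply both paths by $h_{i_0}$, using $p_v = s_{h_{i_0}}s_{h_{i_0}}^*$) replaces the paper's peeling off of initial edges via Claim 1. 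What each approach buys: yours is self-contained and eliminates a WLOG whose justification (compatibility of a sink deletion at $w$ with the reduction being performed at $v$) the paper leaves implicit, while the paper's version records the explicit transition formulas $s_{h_j} = s_{h_i}s_{g_j}s_{g_i}^*$ between out-edges, around which its two Claims are organized. Your saturation argument (applying the saturation condition of Definition \ref{def:saturation} to $v\Lambda^{e_i}$) is a harmless variant of the paper's heredity argument.
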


\begin{proof}
Let $\{ s_\lambda : \lambda \in \Lambda^0 \cup \Lambda^1 \}$ be the canonical Cuntz--Krieger $\Lambda$-family generating $C^*(\Lambda)$. 
Define 
\begin{equation*}
\begin{split}
T_\lambda = s_{par(\lambda)} \text{, for } \lambda \in \Lambda^0_R \cup \Lambda^1_R.
\end{split}
\end{equation*}

We first prove that $\{T_\lambda : \lambda \in \Lambda_R^0 \cup \Lambda_R^1 \}$ is a Cuntz--Krieger $\Lambda_R$-family in $C^*(\Lambda)$. Note that $\{ s_x : x \in \Lambda^0 \}$ are non-zero and mutually orthogonal, and thus so are $\{ T_x : x \in \Lambda_R^0 \}$. Thus $\{ T_\lambda : \lambda \in \Lambda^0_R \cup \Lambda^1_R \}$ satisfies (CK1). Further if $ab, cd \in G_R^*$ such that $ab \sim_R cd$, then $[ par(a) par(b)] =[par(ab)] = [par(cd)] = [par(c) par(d)]$. By (KG0), we therefore have 
\begin{equation*}
T_a T_b = s_{par(a)} s_{par(b)} = 
s_{par(c)} s_{par(d)}  = T_c T_d,
\end{equation*}
and therefore $\{ T_\lambda : \lambda \in \Lambda^0 \cup \Lambda^1 \}$ satisfies (CK2). Now fix $e \in \Lambda_R^1$.  If $r(\iota(e)) \not= v$, we have $\iota(e) = par(e)$ and hence
\begin{equation*}
T_e^* T_e = s_{par(e)}^* s_{par(e)} = s_{s(par(e))} = s_{par(s(e))} = T_{s(e)}.
\end{equation*}
If $r(\iota(e)) = v$, then we similarly have
\begin{equation*}
\begin{split}
T_e^* T_e 
&= (s_f s_{e})^* (s_f s_{e}) = s_{e}^* s_f^* s_f s_{e} = s_{e}^* s_v s_{e} = s_{s(e)} 
= T_{s(e)}.
\end{split}
\end{equation*}
Therefore $\{ T_\lambda : \lambda \in \Lambda^0_R \cup \Lambda^1_R \}$ satisfies (CK3).

Finally, to see that $\{ T_\lambda: \lambda \in \Lambda^0_R \cup \Lambda^1_R \}$ satisfies (CK4), we begin by considering  (CK4) for $x \in \Lambda_R^0$ such that $x \neq w (= r(\Lambda^1 v))$.  Note that for any basis vector $e_i \in \mathbb{N}^{k}$,
\begin{equation*}
\sum\limits_{\substack{r_R(e) = x \\ d_R(e) = e_i}} T_e T_e^*
= \sum\limits_{\substack{r_R(e) = x \\ d_R(e) = e_i}} s_{e} s_{e}^* 
= \sum\limits_{\substack{r(e) =x \\ d(e) = e_i}} s_e s_e^* 
= s_{x} = s_{par(x)} 
= T_x.
\end{equation*}
Thus, (CK4) holds for such vertices $x$.

In order to complete the proof that (CK4) holds, we first need a better understanding of the equivalence relation $\sim$ for paths which pass through $v\in \Lambda^0$.
Since $v \Lambda^1$  and $\Lambda^1 v$ are complete edges by hypothesis,  each set contains precisely one edge of each color. Thus, if we write  $g_{i} $ for the edge in $ v\Lambda^1$ with  $d(g_{i}) = e_{i}$ and $h_{i} $ for the edge in $\Lambda^1 v$  such that $d(h_{i}) = e_{i}$, then $s_v = s_{g_i} s_{g_i}^*$ 
for any $i$.  Moreover, by our hypothesis that $v\Lambda^1$ and $\Lambda^1 v$ are both complete edges, we have, for any $1 \leq i, j \leq k$,
\begin{equation*}
\begin{split}
h_j g_i \sim h_i g_j 
&\implies s_{h_j} s_{g_i} = s_{h_i} s_{g_j} \implies s_{h_j} s_{g_i} s_{g_i}^* = s_{h_i} s_{g_j} s_{g_i}^* \\
&\implies s_{h_j} = s_{h_i} s_{g_j} s_{g_i}^*.
\end{split}
\end{equation*}

Thus since $f = h_j$ for a unique $j$,
\begin{equation*}
s_f s_f^* 
= (s_{h_i}s _{g_j} s_{g_i}^*) (s_{h_i} s_{g_j} s_{g_i}^*)^* 
= s_{h_i} s_{g_j} s_{g_i}^* s_{g_i} s_{g_j}^* s_{h_i}^*
= s_{h_i} s_{h_i}^*
\end{equation*}
for all $i$. 
It follows that 
\begin{equation*}
\begin{split}
\sum\limits_{\substack{r_R(e) = w \\ d_R(e) = e_{i}}} T_e T_e^*
&= T_{g_i} T_{g_i}^* + \sum\limits_{\substack{r_R(e) = w \\ e \neq g_i \\ d_R(e) = e_{i}}} T_e T_e^* = (s_f s_{g_i}) (s_f s_{g_i})^* + \sum\limits_{\substack{e \in w \Lambda^{e_i} \\ s(e) \neq v }} s_e s_e^* \\
&= s_{h_i} s_{h_i}^* + \sum\limits_{\substack{e \in w\Lambda^{e_i} \\ s(e) \neq v }} s_e s_e^* = \sum\limits_{\substack{e \in w\Lambda^{e_i}}} s_e s_e^* 
= s_{w} 
= T_w.
\end{split}
\end{equation*}
\\
Therefore $\{ T_\lambda : \lambda \in \Lambda^0_R \cup \Lambda^1_R \}$ satisfies (CK4), and thus is a Cuntz--Krieger $\Lambda_R$-family in $C^*(\Lambda)$. By the universal property of $C^*(\Lambda_R),$ there exists a homomorphism $\pi:C^*(\Lambda_R) \rightarrow C^*(\Lambda)$ such that, if $C^*(\Lambda_R) = C^*(\{ t_\lambda: \lambda \in \Lambda^0_R \cup \Lambda^1_R\})$, we have $\pi(t_\lambda) = T_\lambda$ for all $\lambda \in \Lambda_R^0 \cup \Lambda_R^1$. The computation above shows that our choice of edge $f$ does not affect the validity of the construction.

We now use the gauge-invariant uniqueness theorem and Lemma \ref{lemma:Action} to
prove that $\pi$ is injective.  If $G$ is the 1-skeleton of $\Lambda$, we define $R:G^* \to \mathbb{Z}^k$ by
\begin{equation*}
\begin{split}
  & R(e) = d(e) \text{, for } e \in \Lambda^1 \text{ s.t. } s(e) \neq v, \\
  & R(e) = d(e) - d(f) \text{, for } e \in \Lambda^1 \text{ s.t. } s(e) = v, \\
  & R(\lambda) = \sum_{i = 1}^{|\lambda|}R(\lambda_i) \text{, for } \lambda = \lambda_{|\lambda|} \cdots \lambda_2 \lambda_1 \in G^* \\
  & R(x) = 0 \text{, for } x \in \Lambda^0.
\end{split}
\end{equation*}
To show that $R$ induces a well defined function on $\Lambda$, suppose that $\mu \sim \nu $ and consider $R(\mu), R(\nu)$. If $s(\mu_i) = v$ then the fact that $\Lambda^1 v$ is a complete edge implies that $s(\nu_i) = v$.  Therefore $R(\mu) = d(\mu) - l \cdot d(f)$ and $R(\nu) = d(\nu) - l \cdot d(f)$, where  $l \in \mathbb{N}$ counts the number of edges in $\mu$ with source $v$. Since $d(\mu) = d(\nu)$ we  conclude $R(\mu) = R(\nu)$. Thus, the function $\beta: \mathbb{T}^{k} \to \text{Aut}(C^*(\Lambda))$ defined by $\beta_z(s_{\mu}s_{\nu}^*) = z^{R(\mu) - R(\nu)} s_{\mu}s_{\nu}^*$ is an action by Lemma \ref{lemma:Action}.

 Let $\alpha$ be the canonical gauge action on $C^*(\Lambda_R)$.
For any $e \in \Lambda^1_R$, $s(\iota(e)) \neq v$ so $R(\iota(e)) = d(\iota(e)) = d_R(e)$. Moreover, if 
$r(\iota(e)) \not= v$, then $\iota(e) = par(e)$ and $\pi(t_e) = s_{\iota(e)}$, and so for any $z \in {\mathbb T}^k$,  
\begin{equation*}
\pi(\alpha_z(t_e))
= \pi(z^{d_R(e)}t_e)
= z^{d(\iota(e))}T_e 
= z^{d(\iota(e))}s_{\iota(e)} 
= z^{R(\iota(e))}s_{\iota(e)}
= \beta_z(s_{par(e)}) 
= \beta_z(\pi(t_e)).
\end{equation*}
If $r(\iota(e)) = v,$ we have 
\begin{equation*}
\begin{split}
\pi(\alpha_z(t_e)) 
= \pi(z^{d_R(e)}t_e)
= z^{d_R(e)}T_e 
= z^{d(\iota(e))} s_f s_{\iota(e)}
& = z^{R(f)}z^{R(e)} s_f s_{\iota(e)} \\
= \beta_z(s_f s_{\iota(e)}) 
= \beta_z(T_e) 
& = \beta_z(\pi(t_e)).
\end{split}
\end{equation*}
It is straightforward to check that $\alpha_z$ and $\beta_z$ also commute on the vertex projections. Therefore, $\pi$ intertwines $\beta$ with the canonical gauge action on $C^*(\Lambda_R)$ and thus, by the gauge invariant uniqueness theorem, $\pi$ is injective.

We now use Theorem \ref{thm:allen} to show that $\text{Im}\, (\pi) \cong C^*(\Lambda_R)$ is Morita equivalent to $C^*(\Lambda)$. 
Define $X:= \Lambda^0 \backslash \{ v\} 
$ and set $P_X = \sum_{x \in X} s_x \in M(C^*(\Lambda))$. Our first goal is to show that $P_X C^*(\Lambda) P_X = \text{Im}(\pi)$.

To see that $\text{Im}(\pi) \subseteq P_X C^*(\Lambda) P_X$, recall that, 
 if $\lambda \in \Lambda_R$, then the vertices $par(s_R(\lambda)) = s(par(\lambda)), par(r_R(\lambda)) = r(par(\lambda))$ both lie in $ X$.  It follows  that  $T_\lambda \in P_X C^*(\Lambda) P_X$ for all $\lambda \in \Lambda_R:$
\begin{equation*}
T_\lambda = s_{par(\lambda)} = s_{r(par(\lambda))} s_{par(\lambda)} s_{s(par(\lambda))} = P_X s_{par(\lambda)} P_X.
\end{equation*}
Thus, $\pi(C^*(\Lambda_R)) \subseteq P_X C^*(\Lambda) P_X.$

For the other inclusion, note that 
\[P_X C^*(\Lambda) P_X = \overline{\text{span}} \{ s_\lambda s_\mu^*  : \mu, \lambda \in G^*,\, s(\mu) = s(\lambda), \, r(\lambda), r(\mu)\in X \}.\]
  We will show that each such generator $s_\lambda s_\mu^*$ is in $\text{Im}(\pi)$.  We begin with the following special case.
  
  {\bf Claim 1:} $s_{h_i} s_{h_j}^* \in \text{Im}(\pi)$ for all edges $h_i , h_j \in \Lambda^1 v$.
  
  To see this, recall that $s_{h_i} = s_{h_j} s_{g_i} s_{g_j}^*$, and so 
  \begin{equation}
  \label{eq:hi-hj-*} 
  s_{h_i} s_{h_j}^* = s_{h_j} s_{g_i} (s_{h_j} s_{g_j})^*.
  \end{equation}
  Now, write $e_\ell = d(f)$. If $\ell = j$ then $s_{h_j} s_{g_i} = T_{\iota^{-1}(g_i)} \in \text{Im}(\pi)$ and $s_{h_j} s_{g_j} = T_{\iota^{-1}(g_j)}$, so $s_{h_i} s_{h_j}^* \in \text{Im}(\pi)$. Thus, we suppose $\ell \not= j$.   
  
  By appealing to the results of Section \ref{sec:Sink}, without loss of generality we may assume that $w = r(h_j)$ is not an $e_\ell $ sink.  Thus, there is an edge $e \in \Lambda^{e_\ell} w$.  Since $\Lambda^1 v$ is a complete edge, we must have $e h_j \sim h f$ for some edge $h$.
It follows that 
\[ s_{h_j} s_{g_i}  = s_w s_{h_j} s_{g_i} = s_e^* s_e s_{h_j} s_{g_i} = s_e^* s_h s_f s_{g_i} .\]
If $r(e) = r(h) \in X$, then $e = par(\iota^{-1}(e))$ and $h f g_i = par(\iota^{-1}(h) \iota^{-1}(g_i))$.  Consequently, in this case we have
\[ s_{h_j} s_{g_i} = 
 s_e^* T_{\iota^{-1}(h)} T_{\iota^{-1}(g_i)} = T_{\iota^{-1}(e)}^*T_{\iota^{-1}(h)} T_{\iota^{-1}(g_i)}   \in \text{Im}(\pi).\]
 If $r(e) = r(h) = v$, then writing $s_v = s_f^* s_f$ we have 
 \[ s_e^* s_{hf g_i} = (s_f s_e)^* s_{f h} s_{f g_i} = T_{\iota^{-1}(e)}^* T_{\iota^{-1}(h)} T_{\iota^{-1}(g_i)} \in \text{Im}(\pi).\]
 
Similarly, we compute  that $s_{h_j} s_{g_j} = T_{\iota^{-1}(e)}^* T_{\iota^{-1}(h)} T_{\iota^{-1}(g_j)} \in \text{Im}(\pi).$  Equation \eqref{eq:hi-hj-*} then implies that $s_{h_i} s_{h_j}^* \in \text{Im}(\pi)$ for all $1 \leq i, j \leq k$, so Claim 1 holds.

Next, we establish our second claim via a case-by-case analysis.

{\bf Claim 2:} If $\eta \in G^*$ and $s(\eta) \not= v$, then $s_\eta \in \text{Im}(\pi)$.  

To see this, note first that 
if  $v$ does not lie on $\eta$, then $\eta = par(\iota^{-1}(\eta))$, so $T_{\iota^{-1}(\eta)} = s_\eta \in P_X C^*(\Lambda) P_X$.  

 If $v$ lies on $\eta$, and $\eta \sim \nu$, then the fact that $\Lambda^1 v, v \Lambda^1$ are complete edges means that $\nu$ will also pass through $v$.  If $\nu \sim \eta$ is such that every edge of $\nu$  with  source $v$ is $f$, then $\nu \in \text{Im}(par)$ and therefore $s_\nu = s_\eta \in \text{Im}(\pi)$.

For the last case, Suppose that $v$ lies on $\eta$ but that for every path $\nu$ with $\nu \sim \eta$, there is an edge $\nu_i$ in $\nu$ with $s(\nu_i) = v$ and $\nu_i \not= f$.  Without loss of generality, suppose that $i$ is the smallest such.  By hypothesis,  $s(\eta) = s(\nu) \not= v$, so $i \not= 1.$

As established in the proof of Claim 1, $ s_{\nu_i} = s_e^* s_h s_f$
for edges $e$ of degree $d(f)$ and $h$ of degree $d(\nu_i)$, and we have 
\[ s_{\nu_i} s_{\nu_{i-1}} = s_e^* s_{h f \nu_{i-1}} \in \text{Im}(\pi).\]
By construction, $\nu_{i-2} \cdots \nu_1$ does not pass through $v$, and so $s_{\nu_{i-2} \cdots \nu_1} \in \text{Im}(\pi)$.  An inductive application of this argument now shows that $s_\nu = s_\eta$ lies in $\text{Im}(\pi)$ whenever $s(\eta) \not= v$.  This completes the proof of Claim 2.

Finally, consider an arbitrary generator $s_\lambda s_\mu^*$ of $P_X C^*(\Lambda) P_X$, with $\lambda = \lambda_{|\lambda|} \cdots \lambda_1, \mu = \mu_{|\mu|} \cdots \mu_1 \in G^*$.  If $s(\lambda) = s(\mu) \not= v$, then applying Claim 2 to $\lambda$ and $\mu$, we see that $s_\lambda s_\mu^* \in \text{Im}(\pi)$.
If $v = s(\lambda) = s(\mu),$ then by Claim 1, $s_{\lambda_1} s_{\mu_1}^* \in \text{Im}(\pi)$.  Since $r(\lambda_1) = r(\mu_1) = w \not= v$, it follows from Claim 2 that if $\eta := \lambda_{|\lambda|} \cdots \lambda_2$ and $\zeta := \mu_{|\mu|} \cdots \mu_2$, then  $s_\eta , s_\zeta \in \text{Im}(\pi)$.  Consequently, $s_\lambda s_\mu^* = s_\eta s_{\lambda_1} s_{\mu_1}^* s_\zeta^* \in \text{Im}(\pi)$ for any generator $s_\lambda s_\mu^*$ of $P_X C^*(\Lambda) P_X$.

Having established that $P_X C^*(\Lambda) P_X = \text{Im}(\pi) \cong C^*(\Lambda_R)$, we now compute the saturation $\Sigma(X)$ in order to apply Theorem \ref{thm:allen}.  Note that $w \in X$ and there are edges $h_j$ in $w \Lambda^1 v$, so any hereditary set containing $X$ must also contain $v$.  Thus $\Sigma(X) = \Lambda^0$.  Theorem \ref{thm:allen} therefore tells us that $C^*(\Lambda) \sim_{ME} C^*(\Lambda_R)$, as claimed.
\end{proof}

\bibliographystyle{amsalpha}
\bibliography{eagbib}

\end{document}